\newcommand{\Cc}{\mathbb{C}} 
\newcommand{\Rr}{\mathbb{R}}
\renewcommand {\ge}{\geqslant}
\newcommand{\grad}{\mathop{\mathrm{grad}}\nolimits} 
\newcommand{\sgn}{\mathop{\mathrm{sgn}}\nolimits} 
\newcommand{\defi}[1]{\emph{#1}}
\newcommand{\wt}[1]{\widetilde{#1}}
\newcommand{\ol}[1]{\overline{#1}}
\newcommand{\ii}{\mathrm{i}}
\newcommand{\dd}{\mathrm{d}}
\renewcommand{\Re}{\mathop{\mathrm{Re}}\nolimits}
\renewcommand{\Im}{\mathop{\mathrm{Im}}\nolimits}
\newcommand{\midarrow}{\tikz \draw[-triangle 90] (0,0) -- +(.1,0);}
\DeclareMathOperator{\rot}{rot}
\DeclareMathOperator{\writhe}{wr}
\DeclareMathOperator{\area}{area}
\DeclareMathOperator{\ind}{ind}
\DeclareMathOperator{\lk}{lk}
\newcommand{\reid}{\mathbf{\Omega}}
\newcommand{\Oo}{\mathbf{O}}
\newcommand{\Ii}{\mathbf{I}}
\theoremstyle{plain}
\newtheorem{theorem}[equation]{Theorem}    
\newtheorem*{theorem*}{Theorem}
\newtheorem{lemma}[equation]{Lemma}       
\newtheorem{proposition}[equation]{Proposition}      
\newtheorem{proposition*}{Proposition} 
\newtheorem{corollary}[equation]{Corollary}      
\theoremstyle{definition}
\newtheorem{definition}[equation]{Definition}      
\newtheorem{condition}[equation]{Condition}
\theoremstyle{remark}
\newtheorem*{remark*}{Remark}  
\newtheorem{remark}[equation]{Remark}   
\newtheorem{example}[equation]{Example}
\newtheorem{question}[equation]{Question}
\newtheorem*{ack}{Acknowledgements}  
\numberwithin{equation}{section}
\title[Intermediate links]{Intermediate links of plane curves}
\author{Arnaud Bodin}
\email{Arnaud.Bodin@math.univ-lille1.fr}
\address{Laboratoire Paul Painlev\'e, Math\'ematiques, Universit\'e Lille 1, 59655 Villeneuve d'Ascq Cedex, France}
\author{Maciej Borodzik}
\email{mcboro@mimuw.edu.pl}
\address{Institute of Mathematics, University of Warsaw, ul. Banacha 2, 02-097, Warsaw, POLAND}
\address{Institute of Mathematics, Polish Academy of Science, ul \'Sniadeckich 8, Warsaw, POLAND}
\subjclass[2010] {Primary  57M25; Secondary 14B05, 58K05}
\keywords{Plane algebraic curves, Transversality, Knots and links, Braids, Quasipositivity, Reidemeister moves.}
\date{\today}
\begin{document}
\begin{abstract}
For a smooth complex curve $\mathcal{C}\subset\Cc^2$ we consider the link $L_r=\mathcal{C}\cap \partial 
B_r$, where $B_r$ denotes an Euclidean ball of radius $r>0$. We prove that
the diagram $D_r$ obtained from $L_r$ by a complex stereographic projection 
satisfies $\chi(\mathcal{C}\cap B_r)=\rot(D_r)-\writhe(D_r)$. As a consequence we show
that if $D_r$ has no negative Seifert circles and $L_r$ is strongly quasipositive
and fibered, then the 
Yamada--Vogel algorithm applied to $D_r$ yields a quasipositive braid.
\end{abstract}
\maketitle


\section{Introduction}

Let $\mathcal{C}\subset\Cc^2$ be a complex algebraic curve. 
Consider $B_r$, the standard 
Euclidean ball with center $0\subset\Cc^2$ and radius $r>0$. Let 
$S_r=\partial B_r$.
Define $L_r=\mathcal{C}\cap S_r$. If the intersection is transverse, then $L_r$ is 
a link in $S_r$, called a \defi{$\Cc$-link} or an \defi{intermediate link} of the plane curve $\mathcal{C}$.
In this paper we will assume that the curve $\mathcal{C}$ is smooth; this is not a strong restriction since any
$\Cc$-link can be obtained as the intersection of a smooth curve and a sphere.

It was conjectured by Rudolph \cite{Ru1983}
and proved by Boileau and Orevkov \cite{BO} that $\Cc$-links are quasipositive. 
A link is called \emph{quasipositive}, if it can be represented by a quasipositive braid. In turn, 
a braid is called \emph{quasipositive} if
it is of the form $\prod w_i\sigma_{k_i}w_i^{-1}$, where $\sigma_j$ are generators in the braid group and $w_i$
are arbitrary words in the braid group.

Quasipositive links  are a subject
of intense research both for their applications in algebraic geometry and for their special properties in link theory;
see \cite{Baa05, Baa14, Hed05, Ru2005} for instance.
Unfortunately the proof that $\Cc$-links are quasipositive links is not constructive: given a $\Cc$-link we lack an algorithm 
translating it into a quasipositive braid. 

Usually a link diagram is thought of as the image of a link under a projection from $\Rr^3$ to $\Rr^2$. For links in $S^3$, in order
to draw a diagram, one usually applies some variant of a stereographic projection. There is a version of a stereographic projection, which seems to
be the most natural when studying $\Cc$-links, namely the complex stereographic projection, see Section~\ref{sec:stereo}. One expects that
the complex stereographic projection of a $\Cc$-link should have some special properties. This idea is exploited in the present article. In particular,
we find a relation between the writhe, the rotation number and the Euler characteristic of a diagram of a $\Cc$-link under
the complex stereographic projection.

In the next step we apply the Yamada--Vogel algorithm (see Section~\ref{sec:vogelalgorithm}) to such a diagram. The Yamada--Vogel algorithm
gives an explicit way of turning a link diagram into a braid. One of the features of this algorithm
is that one can control the Reidemeister moves performed in the algorithm.
We ask, whether the Yamada--Vogel algorithm can turn a diagram of a $\Cc$-link into a quasipositive braid. Our first result gives a sufficient
condition, when this is the case.

\begin{theorem}\label{main1}
Let $L_r$ be a link arising as a transverse intersection of a complex curve $\mathcal{C}\subset\Cc^2$ with a sphere of radius $r$. Let $D_r$
be a diagram of $L_r$ obtained from a complex stereographic projection.
Suppose $L_r$ is fibered strongly quasipositive. If $D_r$ has no negative Seifert circles, then the Yamada--Vogel algorithm applied to $D_r$
gives a quasipositive braid.
\end{theorem}

Here a negative (or a negatively oriented) Seifert circle of $D_r$ is a circle that is obtained from $D_r$ while applying the Seifert algorithm and that has
rotation number $-1$. In general, $D_r$ might have such circles. A procedure of removing negative Seifert circles by an isotopy of $S^2$,
that is, Step~2 of the Yamada--Vogel algorithm (see Section~\ref{sec:vogelalgorithm})
spoils the assumptions of Theorem~\ref{th:et} used in the proof of Theorem~\ref{main1}. Therefore a more sophisticated algorithm 
is needed. An example of such algorithm is given in Section~\ref{sec:murasugiprzytycki}. This is 
a version of Murasugi--Przytycki algorithm for merging Seifert circles. 
To apply this algorithm we introduce a variant of an index of a diagram (or of a bipartite graph), $\ind_{\#}$ (see Definition~\ref{def:hashindex}).
We prove that $\ind_\#$ is always less than or equal to the number of negative Seifert circles and if a diagram has $\ind_\#$ equal to $k$, then
the Murasugi--Przytycki algorithm can be used $k$ times and consequently $k$ negative Seifert circles can be removed.
We obtain the following generalization of Theorem~\ref{main1}.

\begin{theorem}\label{main2}
Suppose that the link $L_r$ is strongly quasipositive fibered and $n_-=\ind_{\#}(D_r)$, where $\ind_{\#}$ is its doubly negative index. 
Then the Murasugi--Przytycki algorithm of Section~\ref{sec:murasugiprzytycki} followed by the Yamada--Vogel
algorithm applied to $D_r$ gives a quasipositive braid.
\end{theorem}

The proof of Theorems~\ref{main1} and~\ref{main2} uses a result of Etnyre--van Horn-Morris, stated in Theorem~\ref{th:et}. Their result
gives necessary and sufficient conditions for a braid representing a strongly quasipositive and fibered link, to be quasipositive.
The conditions are given in terms of the algebraic length of the braid, the number of strands and the Euler characteristics of the fiber.
Now the Yamada--Vogel algorithm allows us to control the algebraic length of the resulting braid, as well as its number of strands, provided
we know the number of negative Seifert circles, the writhe, and the rotation number of the link diagram at the input. Therefore
we need to control the writhe and the rotation number of the link diagram $D_r$. Finding such control is the main technical part of the present article.

The methods that we use to estimate the writhe and the rotation number of $D_r$ are a variant of Morse theory. 
We study changes of the diagram $D_r$ as $r$ goes from $0$
to infinity. 
When $r$ is small, $L_r$ is an unknot and $D_r$ is a round circle.
As $r$ grows, essentially 
two phenomena can happen. One possibility is that for some $r$, $\mathcal{C}$ ceases to be 
transverse to $S_r$: this corresponds to adding a handle to $\mathcal{C}\cap B_r$ and
the isotopy class of $L_r$ changes. The other case is when the isotopy class of $L_r$ is preserved, but
the diagram $D_r$ 
changes. For a generic $\mathcal{C}$ this change is a Reidemeister move. 

Using the complex structure of 
$\mathcal{C}$ and a detailed analysis in local coordinates, we can restrict possible 
Reidemeister moves.
More precisely, we have the following result, which is the main technical result of the article. Refer to Figure~\ref{fig:R1moves} 
for notation of Reidemeister moves.

\begin{theorem}\label{main3} 
Out of 8 directed Reidemeister $\reid_1$ moves, only 
$\reid_1^{\uparrow\boxplus\oplus}$ and $\reid_1^{\downarrow\boxminus\ominus}$ 
can happen if $\mathcal{C}$ is a complex algebraic curve.
Furthermore, we have restrictions on the handle attachments. The only possible $0$--handle attachment is the one that
increases the winding number $rot(D_r)$, 
while the only possible $1$--handle attachment always decreases the winding number.
\end{theorem}

The quantity
$\rot(D_r)-\writhe(D_r)$ is preserved under 
$\reid_1^{\uparrow\boxplus\oplus}$ and $\reid_1^{\downarrow\boxminus\ominus}$
moves, and both $\rot(D_r)$ and $\writhe(D_r)$ are preserved under the Reidemeister moves $\reid_2$ and $\reid_3$. 
Furthermore $1$--handles decrease the difference $\rot(D_r)-\writhe(D_r)$ and
$0$--handles increase it by one. It follows that $\chi(\mathcal{C}\cap 
B_r)-\rot(D_r)+\writhe(D_r)$ does not depend on $r$, where $\chi$
is the Euler characteristic. In particular we obtain the
following result. 

\begin{theorem}\label{main4}
If $D_r$ is a diagram of $L_r$ obtained by the complex stereographic projection,
then $\chi(\mathcal{C} \cap B_r) = \rot(D_r)-\writhe(D_r)$.
\end{theorem}
\begin{remark}\
\begin{itemize}
\item The logic of proofs of Theorem~\ref{main3} and~\ref{main4} is slightly more complicated than stated here. In fact the first
part of Theorem~\ref{main3} about Reidemeister moves is proved as Theorem~\ref{th:reid1}. Next we prove Theorem~\ref{main4}, finally
the last part of Theorem~\ref{main3} is a corollary of Theorem~\ref{main4}. In this way we can restrict possible handle attachments
without the need to work in local coordinates.
\item In Section~\ref{sec:bennequinsl} we discuss the relation of our results with an analogous result of Bennequin. In
particular we explain that Theorem~\ref{main4} is not a consequence of \cite[Proposition 4]{Ben}.
\end{itemize}
\end{remark}

In Theorem~\ref{main3} we study which variants of the $\reid_1$ move can occur on $D_r$ as $r$ changes. 
In theory it is possible that one can also control variants of the $\reid_2$ and $\reid_3$ moves. 
Such a control should use slightly different methods than 
those used in the present article,
because $\reid_2$ and $\reid_3$ are not local moves. If some constraints are obtained, one might potentially find further restrictions on the diagram $D_r$,
in terms of more sophisticated diagram invariants, like the Hass--Nowik invariant, see \cite{HN,Suw}. These restrictions might in turn be enough to show
that for any diagram $D_r$ obtained as a complex stereographic projection of a $\Cc$-link satisfies $\ind_\#(D_r)=n_-(D_r)$, that is, that $D_r$ satisfies
the hypotheses of Theorem~\ref{main2}. This would lead to an explicit algorithm for transforming a strongly quasipositive fibered $\Cc$-link into a quasipositive
braid.

The structure of the paper is the following: Section~\ref{sec:contact} sets up the notation and recalls various transversality notions. 
Section~\ref{sec:stereo} defines the complex stereographic projection and sets up basic properties of diagrams of $\Cc$-links. 
Sections~\ref{sec:changes} and \ref{sec:euler} are  the heart of the article. They contain results about possible changes
of the diagram $D_r$ as $r$ increases. We prove Theorem~\ref{main3} and \ref{main4}. We also discuss its relation with the results of Bennequin.
Section~\ref{sec:diagramsandbraids} recalls the Yamada--Vogel algorithm and then proves Theorem~\ref{main1}. We turn our attention to
negative Seifert circles and show how to apply the Murasugi--Przytycki construction to eliminate them in some cases. 
Section~\ref{sec:diagramsandbraids} ends with the proof of Theorem~\ref{main2}. Section~\ref{sec:proofs} contains proofs of many
lemmas used in Section~\ref{sec:changes}: some of the proofs require tedious calculations in local coordinates, so they are deferred to the last
section of the article.

\begin{ack}
The authors would like to thank to Sebastian Baader, Michel Boileau, Matthew Hedden, Patrick Popescu-Pampu,
J\'ozef Przytycki and Pawe\l{} Traczyk for helpful discussions.
The authors were supported by the POLONIUM program, and by the ANR project ``SUSI'' (ANR-12-JS01-0002-01).
The second author is supported by the National Science Center grant 2016/22/E/ST1/00040.
\end{ack}

\section{Transverse knots and contact structures}
\label{sec:contact}

In this section we set up some notation and recall two notions of transversality.

\subsection{Review of notation on $\Cc^2$}

Coordinates of points in $\Cc^2$ are usually denoted by $(x,y)$. Identifying 
$\Cc^2$ with $\Rr^4$ means that we write
$(x,y)=(x_1,x_2,y_1,y_2)$, where $x=x_1+\ii x_2$ and $y=y_1+\ii y_2$.
Two scalar products are defined on $\Rr^4 \simeq \Cc^2$:
\begin{itemize}
  \item \defi{The real scalar product}:
  $$\left\langle a | b \right\rangle_\Rr = \sum a_i\cdot b_i \quad \text{ for }
  a=\begin{pmatrix}a_1\\a_2\\a_3\\a_4\end{pmatrix}, \  
b=\begin{pmatrix}b_1\\b_2\\b_3\\b_4\end{pmatrix} \in \Rr^4$$ 
  
  \item \defi{The Hermitian scalar product}:
  $$\left\langle a | b \right\rangle_\Cc = \sum \alpha_i\cdot \overline{\beta_i} 
\quad \text{ for }
  a=\begin{pmatrix}\alpha_1\\\alpha_2\end{pmatrix}, \  
b=\begin{pmatrix}\beta_1\\\beta_2\end{pmatrix} \in \Cc^2$$ 
\end{itemize}
The two are related by the following relation:
$$\left\langle a | b \right\rangle_\Rr = \Re \big( \left\langle a | b 
\right\rangle_\Cc \big) $$

\subsection{Smooth transversality} 

Suppose $\mathcal{C}=\{f=0\}$ is a complex curve in $\Cc^2$, where $f$ is a 
reduced polynomial.
Fix $r>0$ and consider $S_r$, the sphere of radius $r$ and center $0$.
We  have:
$$v \in T_{z}S_r \iff \left\langle v | z \right\rangle_\Rr = 0 \quad \text{ 
and } \quad
v \in T_{z}\mathcal{C} \iff  \left\langle v | \grad f(z) \right\rangle_\Cc = 
0$$
The gradient $\grad f$ is \defi{Milnor gradient}:
$$\grad f(x,y) = \left(\overline{\frac{\partial f}{\partial x}} (x,y), 
\overline{\frac{\partial f}{\partial y}} (x,y)\right).$$
The following result is standard, see e.g. \cite{Mi}.
\begin{lemma}
The intersection of $\mathcal{C}$ with $S_r$ at $z$ is (smoothly) transverse if and only if
the two vectors $\grad f$ and $z$ are linearly independent over $\Cc$.
\end{lemma}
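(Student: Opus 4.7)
The plan is to prove the contrapositive: transversality fails at $z$ if and only if $z$ and $\grad f(z)$ are linearly dependent over $\Cc$. First I would dispose of the singular case. If $\grad f(z)=0$, then $z$ is a singular point of $\mathcal{C}$, so $\mathcal{C}$ is not even a manifold there, transversality trivially fails, and the pair $\{0,z\}$ is $\Cc$-linearly dependent. So I may assume $\grad f(z)\neq 0$; then $T_z\mathcal{C}$ is genuinely a complex line (real dimension $2$), while $T_z S_r$ is a real hyperplane (real dimension $3$). Since $2+3>4$, the transversality condition $T_z\mathcal{C}+T_zS_r=\Cc^2$ is equivalent to the negation of the containment $T_z\mathcal{C}\subset T_zS_r$.

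Next I would rephrase the containment using orthogonal complements. By definition of $T_zS_r$, $T_z\mathcal{C}\subset T_zS_r$ is the same as $z\in(T_z\mathcal{C})^{\perp_{\Rr}}$. The crucial ingredient is then the identity
$$W^{\perp_{\Rr}}=W^{\perp_{\Cc}}$$
valid for every complex subspace $W\subset\Cc^2$. This in turn follows from the short computation $\langle v|\ii w\rangle_{\Cc}=-\ii\,\langle v|w\rangle_{\Cc}$: if the real part of $\langle v|w\rangle_{\Cc}$ vanishes for every $w\in W$, then, applying this to $\ii w\in W$, so does $\Re\langle v|\ii w\rangle_{\Cc}=\Im\langle v|w\rangle_{\Cc}$, so the full Hermitian pairing already vanishes.

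Applying this identity to $W=T_z\mathcal{C}=(\grad f(z))^{\perp_{\Cc}}$, one gets
$$(T_z\mathcal{C})^{\perp_{\Rr}}=(T_z\mathcal{C})^{\perp_{\Cc}}=\Cc\cdot\grad f(z),$$
a complex line. Therefore non-transversality at $z$ is equivalent to $z\in\Cc\cdot\grad f(z)$, which is exactly the statement that $z$ and $\grad f(z)$ are linearly dependent over $\Cc$.

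The only mildly subtle point in the whole argument is the passage between real and complex orthogonality of a complex subspace; once that identity is recorded, the rest is a dimension count together with the tautological descriptions of $T_zS_r$ and $T_z\mathcal{C}$. I do not foresee any real obstacle.
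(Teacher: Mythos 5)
Your argument is correct, and it is essentially the standard proof of this fact: the paper itself omits the proof (citing Milnor), and the key point there is exactly your identity $W^{\perp_{\Rr}}=W^{\perp_{\Cc}}$ for a complex ($\ii$-invariant) subspace $W$, combined with the dimension count showing that non-transversality forces $T_z\mathcal{C}\subset T_zS_r$. Nothing to add.
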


\begin{definition}
If $\mathcal{C}$ is transverse to $S_r$, then the intersection of $\mathcal{C}$ with $S_r$ is called a $\Cc$-link and denoted by $L_r$.
In general, a link $L$ is called a \emph{$\Cc$-link}, if it arises as a transverse intersection of a sphere in $\Cc^2$ with a complex curve.
\end{definition}
\begin{remark}
Sometimes a $\Cc$-link is called a \emph{transverse link}. In this paper we use the word $\Cc$-link to avoid confusion with links that are
transverse to the standard contact structure on $S_r$; see below.
\end{remark}
\subsection{Contact transversality} \label{sec:contact2}

The plane field on $TS_r$:
$$H_z = T_{z}S_r \cap \ii T_{z}S_r,$$
where $\ii$ means the complex multiplication by $\sqrt{-1}$
on the tangent space $T_{z}S_r$, defines a standard contact structure on 
$S_r$. In coordinates
$(x,y)$ in $\Cc^2$ define a $1$-form:
\begin{equation}\label{eq:contact}
\alpha = x_1\dd x_2 - x_2\dd x_1 + y_1 \dd y_2 - y_2\dd y_1.
\end{equation}
On $S_r$ we have $H_z=\ker\alpha$.
We also have:
$$v \in H_z 
\iff \left\{\begin{array}{l} 
  v \in T_{z}S_r \\ 
  \ii v \in T_{z}S_r
\end{array}\right.
\iff \left\{\begin{array}{l} 
  \left\langle v | z \right\rangle_\Rr = 0 \\ 
  \left\langle \ii v | z \right\rangle_\Rr = 0
\end{array}\right.
\iff  \left\langle v | z \right\rangle_\Cc = 0$$

\begin{definition}[Contact transversality] A link  $L_r \subset S_r$ is 
\defi{(contact) transverse} to the standard contact structure
on $S_r$ at $z$ if and only if $T_zL_r\cap H_z=0$.
\end{definition}

The following result is standard.
\begin{proposition}
If a link $L_r = \mathcal{C} \cap S_r$ is the smoothly transverse intersection of the 
algebraic curve $\mathcal{C}$ and the sphere $S_r$ at any point of $L_r$, then the link 
$L_r$ is contact transverse to the standard contact structure of $S_r$. Moreover,
$L_r$ is \emph{positively transverse}, that is if $z\in L_r$ and $v\in T_zL_r$ is a positive vector
(that is, agreeing with the orientation of $L_r$), then 
\begin{equation}\label{eq:positive}
\alpha(v)>0.
\end{equation}
\end{proposition}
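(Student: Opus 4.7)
The plan is to reduce everything to the identity $\alpha(v)=\Im\langle v\,|\,z\rangle_\Cc$ for $v\in T_zS_r$, and then use the Hermitian decomposition of $z$ along $T_z\mathcal{C}$ and its orthogonal complement.

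\medskip

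\textbf{Step 1: an explicit formula for $\alpha(v)$.} Starting from \eqref{eq:contact} and writing $v=(v_1,v_2,v_3,v_4)$, $z=(x_1,x_2,y_1,y_2)$, a direct computation gives
\[
\langle v\,|\,z\rangle_\Cc = (v_1x_1+v_2x_2+v_3y_1+v_4y_2)+\ii(v_2x_1-v_1x_2+v_4y_1-v_3y_2),
\]
so $\Re\langle v\,|\,z\rangle_\Cc=\langle v\,|\,z\rangle_\Rr$ (as already noted in the text) and $\Im\langle v\,|\,z\rangle_\Cc=\alpha(v)$.

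\medskip

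\textbf{Step 2: contact transversality.} Let $v\in T_zL_r$ be nonzero. Since $v\in T_zS_r$, we have $\Re\langle v\,|\,z\rangle_\Cc=0$, so $\langle v\,|\,z\rangle_\Cc=\ii\alpha(v)$. Since $v\in T_z\mathcal{C}$, the complex line $T_z\mathcal{C}$ equals $\Cc\cdot v$ and its Hermitian orthogonal complement is $\Cc\cdot\grad f$. Write
\[
z=av+b\,\grad f,\qquad a,b\in\Cc.
\]
Then $\langle v\,|\,z\rangle_\Cc=\bar a|v|^2$, and the condition $\Re\langle v\,|\,z\rangle_\Cc=0$ forces $a=\ii t$ for some $t\in\Rr$. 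Hence
\[
\alpha(v) = -t\,|v|^2.
\]
Algebraic transversality (Lemma~2.2) says $z$ and $\grad f$ are linearly independent over $\Cc$, which is equivalent to $a\neq 0$, i.e.\ $t\neq 0$. Therefore $\alpha(v)\neq 0$, so $v\notin H_z=\ker\alpha|_{T_zS_r}$, proving $T_zL_r\cap H_z=0$.

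\medskip

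\textbf{Step 3: positive transversality.} Starting from the nonzero $v\in T_zL_r$ chosen above, the real basis $(v,\ii v)$ is positively oriented for the complex orientation of $T_z\mathcal{C}$. The outward normal to $\mathcal{C}\cap B_r$ inside $\mathcal{C}$ at $z$ is proportional to the projection of $z$ onto $T_z\mathcal{C}$, namely $av=\ii t v$. The boundary orientation on $L_r$ is characterized by requiring (outward normal, positive tangent) to be positively oriented in $T_z\mathcal{C}$. Reading coordinates in the basis $(v,\ii v)$, a $2\times 2$ determinant check shows that the positively oriented tangent $v^+$ equals $v$ if $t<0$ and $-v$ if $t>0$. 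In either case,
\[
\alpha(v^+)=|t|\,|v|^2>0,
\]
which is \eqref{eq:positive}.

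\medskip

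The computational heart of the argument is Steps 1--2, but the only real obstacle is Step 3: the calculation is elementary, yet one must carefully align three orientation conventions (the complex orientation of $\mathcal{C}$, the induced boundary orientation of $L_r=\partial(\mathcal{C}\cap B_r)$, and the sign convention in \eqref{eq:contact}) to make sure that the sign of $\alpha(v^+)$ comes out positive, not merely nonzero.
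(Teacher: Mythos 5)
Your proof is correct. Note that the paper itself offers no proof of this proposition --- it is stated as ``standard'' (it goes back to Bennequin) --- so there is nothing to compare against line by line; your argument is the standard one and it is carried out accurately. The key identity $\alpha(v)=\Im\langle v\,|\,z\rangle_\Cc$ for the form \eqref{eq:contact}, the Hermitian-orthogonal decomposition $z=av+b\,\grad f$ with $T_z\mathcal{C}=\Cc v=(\grad f)^{\perp_\Cc}$, and the identification of algebraic transversality with $a\neq 0$ are all right, and the sign bookkeeping in Step 3 checks out: with $a=\ii t$ the outward normal in $T_z\mathcal{C}$ is a positive multiple of $\ii t v$, and the ``outward normal first'' convention gives $v^+=-\sgn(t)\,v$, hence $\alpha(v^+)=|t|\,|v|^2>0$ in both cases. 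One small point worth making explicit: you take the outward normal to be the projection of $z$ onto $T_z\mathcal{C}$ computed Hermitian-orthogonally, whereas the gradient of $|x|^2+|y|^2$ restricted to $\mathcal{C}$ is the \emph{real}-orthogonal projection of $2z$; these agree precisely because $T_z\mathcal{C}$ is a complex subspace (if $\Re\langle w\,|\,u\rangle_\Cc=0$ for all $w$ in a complex subspace, then applying this to $\ii w$ kills the imaginary part too). With that sentence added, the argument is complete.
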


\begin{remark}
Both notions of transversality, smooth and contact, are widely used throughout the paper. In order to avoid confusion, we
use more precise terminology `smooth transversality' and `contact transversality', even if it might be clear from the context.
A word 'non-transversality' always refers to lack of smooth transversality.
\end{remark}
%
%
%
%

\section{The complex stereographic projection}
\label{sec:stereo}

\subsection{The construction}
Consider now $\Cc\times\Rr\simeq \Rr^3$. We choose coordinates $X,Y,Z$ on $\Rr^3$
so that $X+\ii Y$ is a coordinate on $\Cc$. 
\begin{definition}
The map 
$$\Psi_r : S_r \setminus\{(0,-r)\} \to \Rr^3 \simeq \Cc \times \Rr \qquad 
(x,y) \mapsto \left(\frac{x}{r+y}, \frac{-\ii r (y-\ol y)}{2|r+y|^2}\right)$$
is called the \defi{complex stereographic projection}.
\end{definition}

The following result is well-known, we refer to the survey of Geiges \cite{Ge}.

\begin{lemma}
\label{lem:psir}
The map $\Psi_r$ has the following properties.
\begin{itemize}
  \item $\Psi_r$ is an orientation preserving diffeomorphism;
  
  \item $\Psi_r$ sends the standard contact structure $\alpha$ 
  on the sphere $S_r$ to the standard contact structure of $\Rr^3$.
  More precisely, if $\beta  = \dd Z - (Y\dd X-X\dd Y)$ then
  $$\Psi^*(\beta) = \frac{1}{|r+y|^2} \; \alpha$$
  
\end{itemize}
\end{lemma}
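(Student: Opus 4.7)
The plan is to recast the two contact forms in complex notation and then verify both claims by direct pullback computation. The form $\alpha$ of \eqref{eq:contact} can be written as $\alpha = \Im(\bar x\,\dd x + \bar y\,\dd y)$, and the target form $\beta = \dd Z - (Y\,\dd X - X\,\dd Y)$ becomes $\beta = \dd Z + \Im(\bar W\,\dd W)$, where $W = X + \ii Y$. These complex reformulations reduce the bookkeeping considerably.

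For the diffeomorphism assertion, I would first verify smoothness: on $S_r$ the denominator $r+y$ vanishes only at $(0,-r)$, because $y = -r$ combined with $|x|^2+|y|^2 = r^2$ forces $x = 0$. Next I would construct an explicit smooth inverse. Setting $\rho = |r+y|^2$, the equations $W = x/(r+y)$, $Z = r\Im(y)/\rho$, and $|x|^2 + |y|^2 = r^2$, together with $\rho = r^2 + 2r\Re y + |y|^2$, yield a quadratic equation in $\rho$ with a unique positive solution; once $\rho$ is known, $\Re y$, $\Im y$, and $x$ are recovered in turn, all smoothly in $(W, Z)$. Hence $\Psi_r$ is a diffeomorphism. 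Orientation preservation then follows by computing the Jacobian at a single convenient base point, say $(r,0) \in S_r$, and invoking connectedness of $S_r \setminus \{(0,-r)\}$.

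For the contact identity, I would compute $\Psi_r^*\beta$ via the complex formulation. From $W = x/(r+y)$,
\[
\Psi_r^*(\bar W\,\dd W) = \frac{\bar x\,\dd x}{|r+y|^2} - \frac{|x|^2\,\dd y}{|r+y|^2\,(r+y)},
\]
whose imaginary part contributes $\Im(\bar x\,\dd x)/|r+y|^2$ -- precisely the $x$-part of $\alpha/|r+y|^2$ -- together with a term involving only $y$. Separately, $\Psi_r^*(\dd Z)$ comes from differentiating $Z = r\Im(y)/|r+y|^2$. Matching $\Psi_r^*\beta$ against $\alpha/|r+y|^2$ then reduces the lemma to the single relation
\[
\Psi_r^*(\dd Z) - \frac{|x|^2}{|r+y|^4}\,\Im\bigl((r+\bar y)\,\dd y\bigr) = \frac{\Im(\bar y\,\dd y)}{|r+y|^2}.
\]

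The main obstacle is this last identity. After substituting $|x|^2 = r^2 - |y|^2$ from the sphere equation it becomes a purely algebraic identity in the real coordinates $y_1, y_2$, and it is exactly here that the specific shape of the second coordinate of $\Psi_r$ and the sphere constraint must conspire to produce the missing $\Im(\bar y\,\dd y)$ term with the correct conformal factor $1/|r+y|^2$. Expanding both sides and collecting the coefficients of $\dd y_1$ and $\dd y_2$ yields the required equality, completing the proof.
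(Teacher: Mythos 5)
Your proof is correct, but note that the paper does not actually prove this lemma at all: it is stated as well known and referred to the survey of Geiges, so any comparison is between your explicit computation and a literature citation. What your route buys is a self-contained verification, and the skeleton is sound. The complex rewritings $\alpha=\Im(\bar x\,\dd x+\bar y\,\dd y)$ and $\beta=\dd Z+\Im(\bar W\,\dd W)$ are right, the formula for $\Psi_r^*(\bar W\,\dd W)$ is right, and the reduction to the single identity
\[
\dd Z-\frac{|x|^2}{|r+y|^4}\,\Im\bigl((r+\bar y)\,\dd y\bigr)=\frac{\Im(\bar y\,\dd y)}{|r+y|^2}
\]
is the correct residual claim; I checked that after substituting $|x|^2=r^2-|y|^2$ the coefficients of $\dd y_1$ and $\dd y_2$ on the left are $-y_2/|r+y|^2$ and $y_1/|r+y|^2$ respectively (using $|x|^2-2r(r+y_1)=-|r+y|^2$ and $r|r+y|^2-2ry_2^2-|x|^2(r+y_1)=y_1|r+y|^2$), matching the right-hand side. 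Since the discrepancy form involves only $\dd y_1,\dd y_2$ and its coefficients vanish pointwise on $S_r$, its restriction to $TS_r$ vanishes, which is all the lemma asserts. Two small remarks: your equation for $\rho=|r+y|^2$ in the inverse construction actually collapses to the linear relation $\rho\bigl((|W|^2+1)^2+4Z^2\bigr)=4r^2$, which makes smoothness of the inverse immediate; and for the orientation statement you should make explicit which orientation of $S_r$ is used (the boundary orientation induced from the complex orientation of $B_r$) before evaluating the Jacobian at your base point, since that convention is what makes the sign come out positive.
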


\begin{example}
If $L_r = \{y=0\} \cap S_r$, then
$\Psi_r (L_r) = S^1_1 \times \{0\} \subset \Cc\times \Rr $.
\end{example}

\begin{corollary}\label{cor:istransverse}
If $\Psi_r(L_r)$ is the image of a $\Cc$-link, then for all points $z'\in\Psi_r(L_r)$, and positive vectors $v'\in T_z\Psi_r(L_r)$
(positive means that the direction of the vector agrees with the orientation of $\Psi_r(L_r)$), we have $\beta(z',v')>0$.
In other words along the link image:
$$\dd Z > Y\dd X-X\dd Y$$
\end{corollary}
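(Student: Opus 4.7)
The plan is to combine the two ingredients already available in the text: the positive transversality of $\Cc$-links to the standard contact structure on $S_r$ (equation~\eqref{eq:positive}, $\alpha(v)>0$) and the pullback identity of Lemma~\ref{lem:psir}, which reads $\Psi_r^*\beta = \frac{1}{|r+y|^2}\alpha$. The corollary is then essentially a naturality computation.

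More precisely, let $z'\in\Psi_r(L_r)$ and write $z' = \Psi_r(z)$ for the unique $z\in L_r$; let $v'\in T_{z'}\Psi_r(L_r)$ be positive. Since $\Psi_r$ is a diffeomorphism, there is a unique $v\in T_zL_r$ with $(\Psi_r)_*v = v'$, and since $\Psi_r$ is orientation preserving (again by Lemma~\ref{lem:psir}), $v$ is positive with respect to the orientation of $L_r$. Then
\[
\beta(z',v') \;=\; \beta\bigl(\Psi_r(z),\,(\Psi_r)_*v\bigr) \;=\; (\Psi_r^*\beta)(z,v) \;=\; \frac{1}{|r+y|^2}\,\alpha(z,v).
\]
By \eqref{eq:positive}, $\alpha(z,v)>0$, and the factor $\tfrac{1}{|r+y|^2}$ is strictly positive (note $y\ne -r$ since the point $(0,-r)$ is removed from $S_r$ before applying $\Psi_r$). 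Hence $\beta(z',v')>0$.

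Finally, unpacking the definition $\beta = dZ - (Y\,dX - X\,dY)$ at $(z',v')$ gives $dZ(v') - (Y\,dX - X\,dY)(v') > 0$, i.e.\ $dZ > Y\,dX - X\,dY$ along $\Psi_r(L_r)$, which is the stated inequality.

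There is no real obstacle: the only point worth double-checking is that $\Psi_r$ is indeed orientation-preserving (so positive tangents are pushed to positive tangents) and that $|r+y|>0$ on the domain of $\Psi_r$; both are recorded in or immediate from Lemma~\ref{lem:psir}. The argument is thus a one-line pullback computation, once the earlier proposition and lemma are in hand.
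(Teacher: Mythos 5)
Your proof is correct and follows exactly the paper's own argument: pull $\beta$ back through $\Psi_r$ using Lemma~\ref{lem:psir} and invoke the positive transversality inequality $\alpha(v)>0$. The only difference is that you spell out the orientation-preservation and $|r+y|>0$ points explicitly, which the paper leaves implicit.
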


\begin{proof}
For a point $z=(x,y) \in S_r$ and a vector $v\in T_{z} S_r$, we denote
$z' = \Psi_r(z)$ and $v' = \dd\Psi_r(z)(v)$.
For $v\in T_{z} L_r$:
$$\beta(z',v')=\beta\big(\Psi_r(z),\dd\Psi_r(z)(v) \big)
= \Psi_r^*(\beta)(z,v) = \frac{1}{|r+y|^2}\alpha(z,v)>0.$$
\end{proof}

It might happen that the curve $\mathcal{C}$ intersects the real half-line 
$(0,0,-t,0)$, $t\ge 0$. Then $\Psi_r$ is not defined at some points of $L_r$. To avoid this
situation observe that the
set of possible half-lines has real dimension $3$, and $\mathcal{C}$ has dimension 
$2$. Suppose $0\notin\mathcal{C}$ (this is a generic condition). As each point of $\mathcal{C}$ belongs to precisely
one half-line, by dimension counting argument there exists a half-line
that does not intersect $\mathcal{C}$ at all. We can rotate the coordinate 
system on $\Cc^2$ by an $SU(2)$ matrix so that $(0,0,-t,0)$
is a half-line not intersecting $\mathcal{C}$. 

From now on we shall assume that 
$(0,0,-t,0)$ does not meet $\mathcal{C}$.

\subsection{Stokes formula and negative circles in the projection}

To a $\Cc$-link $L_r$, we associate its \defi{image} $\tilde L_r = \Psi_r(L_r)$ in 
$\Rr^3$ 
by the complex stereographic projection. 
Define the projection $\pi' : \Rr^3 \to \Rr^2$, $(X,Y,Z)\mapsto (X,Y)$
and the composition $\pi_r = \pi' \circ \Psi_r$.
Then we associate to  $L_r$ its \defi{diagram} 
$D_r= \pi_r(L_r) = \pi'\circ\Psi_r(L_r)$ in $\Rr^2$.

We are going to exploit the property of $L_r$ stated in 
Corollary~\ref{cor:istransverse}. The key point will be the Stokes formula.
Let $D_r$ be a link diagram considered as an oriented curve in 
$\Rr^2$. We need some terminology.

\begin{definition}\
\begin{itemize}
\item A piecewise smooth closed curve $\gamma\subset D_r$, whose orientation is 
compatible with $D_r$, is called a \defi{circuit}.
\item The complement in $\Rr^2$ of a circuit $\gamma$ is a union of regions 
$A_\infty,A_1,\ldots,A_k$, where $A_\infty$ is the only unbounded region. For 
$j=1,\ldots,k$ we set $\ind_j(\gamma)$ to
be the index of $\gamma$ relative to a point in the interior of $A_j$. This can 
be defined as the signed intersection number of $\gamma$ and any half-line in 
$\Rr^2$, whose endpoint is in
the interior of $A_j$.
\item A circuit is called \defi{simple} if $\gamma$ is a simple closed curve. 
This amounts to saying that $k=1$ and $\ind_1(\gamma)=\pm 1$.
\item A circuit is \defi{positive} if for all finite $j$, $\ind_j(\gamma)>0$. It is 
negative if $\ind_j(\gamma)<0$ for all finite $j$.
\item A circuit is called a \defi{Seifert circuit} if it is simple and at each double point $w$ 
of $D_r$ that belongs to $\gamma$, $\gamma$ makes a turn. In particular, $\gamma$ is not smooth at any double point it passes through.
\end{itemize}
\end{definition}

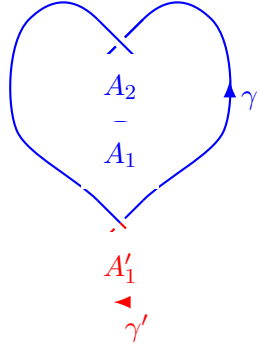
\begin{figure}[H]
  \def\myepsilon{0.2}
\newcommand{\pluscrossing}{
\draw[thick, black] (-1,1)--(1,-1);
\draw[thick, black] (-1,-1)--(-\myepsilon,-\myepsilon);
\draw[thick, black] (\myepsilon,\myepsilon)--(1,1);
}
\newcommand{\minuscrossing}{
\draw[thick, black] (1,1)--(-1,-1);
\draw[thick, black] (-1,1)--(-\myepsilon,\myepsilon);
\draw[thick, black] (\myepsilon,-\myepsilon)--(1,-1);
}
\newcommand{\knotbot}{
(-1,-1)
.. controls +(-1,-1) and +(-1,0) .. (0,-3.5)
.. controls +(1,0) and +(1,-1) .. (1,-1)
}

\newcommand{\knotleft}{
(-1,1)
.. controls +(-1,1) and +(1,1) .. (-4,1)
.. controls +(-1,-1) and +(-0.5,1) .. (-4.5,-4)
.. controls +(0.5,-1) and +(-1,1) .. (-1,-7)
}
\newcommand{\knotright}{
(1,1)
.. controls +(1,1) and +(-1,1) .. (4,1)
.. controls +(1,-1) and +(0.5,1) .. (4.5,-4)
.. controls +(-0.5,-1) and +(1,1) .. (1,-7)
}

\newcommand{\fillknotbot}{
(0,0)
--(-1,-1)
.. controls +(-1,-1) and +(-1,0) .. (0,-3.5)
.. controls +(1,0) and +(1,-1) .. (1,-1)
--cycle
}

\newcommand{\fillknottop}{
(0,0)--\knottop--(0,0)
-- (1,-1)
.. controls +(1,-1) and +(1,0) .. (0,-3.5)
.. controls +(-1,0) and +(-1,-1) .. (-1,-1)

}

\newcommand{\fillknot}{
(0,0)--\knottop--(0,0)
}

\begin{tikzpicture}[scale=0.3]
            \pgfdeclareradialshading{glow}{\pgfpoint{0cm}{0cm}}{
 			 color(0mm)=(white);
  			color(3mm)=(white);
  			color(7mm)=(black);
  			color(10mm)=(black)
			}

			\begin{tikzfadingfrompicture}[name=glow fading]
  				\shade [shading=glow] (0,0) circle (1);
			\end{tikzfadingfrompicture}

\begin{scope}[rotate=0]

\draw[thick, blue] \knotbot;
\draw[thick, blue] (-1,1)--(1,-1);
\draw[thick, blue] (-1,-1)--(-\myepsilon,-\myepsilon);
\draw[thick, blue] (\myepsilon,\myepsilon)--(1,1);
\draw[thick, blue] \knotleft;
\draw[thick, blue] \knotright;
\draw[thick, blue] (-1,-7)--(0,-8);
\draw[thick, blue] (-1,-1)--(-\myepsilon,-\myepsilon);
\draw[thick, blue] (\myepsilon,\myepsilon)--(1,1);
\draw[thick, blue] (\myepsilon,-8+\myepsilon)--(1,-7);

\draw[thick, dashed, red] (-1,-9)--(-\myepsilon,-8-\myepsilon);
 \draw[thick, dashed, red] (0,-8)--(1,-9);

 \begin{scope}[yshift=-8cm]
  \draw[thick, dashed, red] \knotbot;
  \end{scope}

\draw[->,>=latex,blue, very thick] (4.8,-2)--+(0,0.3) node[below right]{$\gamma$};

\node at (0,-2) [blue,fill=white,inner sep=8pt,path fading=glow fading]{$A_2$};
\node at (0,-5) [blue,fill=white,inner sep=8pt,path fading=glow fading]{$A_1$};

\node at (0,-10) [red,fill=white,inner sep=8pt,path fading=glow fading]{$A'_1$};
\draw[->,>=latex,red, very thick, dashed] (0,-11.5)--+(-0.3,0) node[below right]{$\gamma'$};
\end{scope}

\end{tikzpicture}
  \caption{A positive non-simple circuit $\gamma$ (in blue) with $A_1$ of index $1$ and $A_2$ of index $2$.
  A negative simple circuit $\gamma'$ (in red, dashed line) with $A_1'$ of index $-1$.\label{fig:circuit}}
\end{figure}
\begin{remark}
There is a subtle difference between a Seifert circuit and a Seifert circle. The latter is a part of the smoothed diagram, while
the first is a part of the diagram. 
\end{remark}
For a smooth arc $\alpha\subset D_r$, there is a unique lift 
$\tilde\alpha\subset \tilde L_r\subset\Rr^3$
of $\alpha$ to $\tilde L_r$. Suppose $\gamma$ is a circuit. 
It is a union of smooth arcs $\alpha_1,\ldots,\alpha_n$. 
Order the arcs so that the endpoint of 
$\alpha_i$ is the beginning of $\alpha_{i+1}$ (with
$\alpha_{n+1}$ understood as $\alpha_1$).
The lift $\tilde\gamma$ 
is a disjoint union of smooth oriented connected arcs 
$\tilde\alpha_1,\ldots,\tilde\alpha_n$.

\begin{definition}
Let $w_i$ be the endpoint of $\alpha_i$. 
The \defi{jump} $\lambda_i$ of $\gamma$ at $w_i$, is the difference of the $Z$ 
coordinates of $\tilde\alpha_{i+1}$ and $\tilde\alpha_{i}$ at the point $w_i$.
\end{definition}

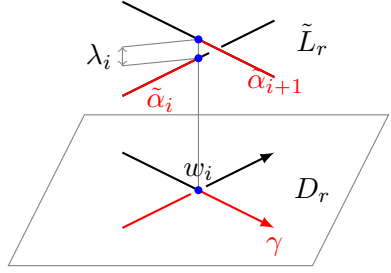
\begin{figure}[h]
  \begin{tikzpicture}[scale=1]

\def\myepsilon{0.1}

\draw[thick, black] (-1,0.5)--(0,0);
\draw[->,>=latex,thick, black] (\myepsilon,\myepsilon/2)--(1,0.5);

\draw[thick, red] (-1,-0.5)--(-\myepsilon,-\myepsilon/2);
\draw[->,>=latex,thick, red] (0,0)--(1,-0.5);

\draw[gray] (-2.5,-1)--++(4,0)--++(1,2)--++(-4,0)--cycle;

\begin{scope}[yshift=2cm]
  \draw[thick, black] (-1,0.5)--(1,-0.5);

  \draw[thick, black] (-1,-0.75)-- +(1.14,0.57) ;
  \draw[thick, black] (1,0.25)--+(-0.7,-0.35);

  \draw[thick, red] (-1,-0.75)-- +(1,0.5) node[midway, below] {$\tilde\alpha_i$};
  \draw[thick, red] (0,0)--(1,-0.5) node[midway, below right] {$\tilde\alpha_{i+1}$};

  \draw[thin,gray] (0,0)--+(-1,-0.1);
  \draw[thin,gray] (0,-0.25)--+(-1,-0.1);
 \draw[<->,thin,gray] (-1,-0.1)--(-1,-0.35) node[midway,left, black] {$\lambda_i$};
\end{scope}

\draw[thin, gray] (0,0)--+(0,2);

\fill[blue] (0,0)  circle (1.5pt) node[above, black]{$w_i$};

\node at (1.5,0) {$D_r$};
\node at (1.5,2) {$\tilde L_r$};
\node[red] at (1,-0.75) {$\gamma$};

\fill[blue] (0,2) circle (1.5pt);
\fill[blue] (0,1.75) circle (1.5pt);

\end{tikzpicture}
  \caption{The jump.\label{fig:jump}}
\end{figure}

The fundamental consequence of contact transversality of $L_r$ is the following.
\begin{proposition}\label{prop:stokes}
For any circuit $\gamma$ of $D_r$ we have
\[\sum_{i=1}^n\lambda_i< 2\mathcal{A}(\gamma),\]
where
\[\mathcal{A}(\gamma)=\sum_{i=1}^k\ind_j(\gamma)\area(A_j),\]
and $A_1,\ldots,A_k$ are the bounded connected components of the complement 
$\Rr^2\setminus\gamma$.
\end{proposition}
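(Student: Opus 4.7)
The plan is to apply the transversality inequality from Corollary~\ref{cor:istransverse} arc by arc and then convert the resulting line integral along $\gamma$ into an area integral via Green's formula.

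First I would decompose $\tilde\gamma\subset\tilde L_r$ into the smooth connected arcs $\tilde\alpha_1,\ldots,\tilde\alpha_n$ lying over $\alpha_1,\ldots,\alpha_n$, and integrate the pointwise inequality $\dd Z > Y\dd X - X\dd Y$ along each $\tilde\alpha_i$. Since each $\tilde\alpha_i$ is a smooth arc of positive length (and the inequality is strict everywhere), summing over $i$ gives
\[
\sum_{i=1}^n\int_{\tilde\alpha_i}\dd Z \;>\; \sum_{i=1}^n\int_{\tilde\alpha_i}(Y\dd X-X\dd Y)\;=\;\int_\gamma(Y\dd X-X\dd Y),
\]
using that $\pi'\circ\Psi_r$ sends $\tilde\alpha_i$ to $\alpha_i$ and the forms $Y\dd X-X\dd Y$ only involve the $(X,Y)$ coordinates.

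Next I would compute the left-hand side telescopically. Writing $\tilde p_i^{\text{begin}}$, $\tilde p_i^{\text{end}}$ for the endpoints of $\tilde\alpha_i$, one has $\int_{\tilde\alpha_i}\dd Z=Z(\tilde p_i^{\text{end}})-Z(\tilde p_i^{\text{begin}})$. By the definition of the jump, $\lambda_i=Z(\tilde p_{i+1}^{\text{begin}})-Z(\tilde p_i^{\text{end}})$ (indices mod $n$). Re-indexing,
\[
\sum_{i=1}^n\int_{\tilde\alpha_i}\dd Z \;=\; \sum_{i=1}^n Z(\tilde p_i^{\text{end}})-\sum_{i=1}^n Z(\tilde p_i^{\text{begin}}) \;=\; -\sum_{i=1}^n\lambda_i.
\]
For the right-hand side I would invoke Green's formula applied to the $1$-form $Y\dd X-X\dd Y$, whose exterior derivative is $-2\,\dd X\wedge\dd Y$. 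For a simple positively oriented loop bounding a region $A$ this gives $-2\area(A)$; for a general (not necessarily simple) closed piecewise smooth curve $\gamma$ in $\Rr^2$ the standard extension yields
\[
\int_\gamma(Y\dd X-X\dd Y)\;=\;-2\sum_{j=1}^k\ind_j(\gamma)\area(A_j)\;=\;-2\mathcal{A}(\gamma).
\]
Combining the three displayed identities yields $-\sum\lambda_i>-2\mathcal{A}(\gamma)$, hence $\sum_{i=1}^n\lambda_i<2\mathcal{A}(\gamma)$.

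The only subtle point — and what I would flag as the main obstacle — is justifying the Green/Stokes identity for a piecewise smooth closed curve that is \emph{not} simple. This is however classical: decompose $\gamma$ using its self-intersections into finitely many simple closed loops, apply Green's theorem to each, and observe that a point in $A_j$ is enclosed exactly $\ind_j(\gamma)$ times with sign. Everything else is routine, modulo verifying that the piecewise smooth structure at the corner points $w_i$ does not spoil the integration (it does not, since each $\tilde\alpha_i$ is smooth and the form $\dd Z-(Y\dd X-X\dd Y)$ has a pointwise interpretation; the jump terms absorb the discontinuities).
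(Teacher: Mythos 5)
Your proposal is correct and follows essentially the same route as the paper's proof: the strict inequality $\dd Z > Y\dd X - X\dd Y$ from Corollary~\ref{cor:istransverse} integrated over each lifted arc $\tilde\alpha_i$, the telescoping identity $\sum_i\int_{\tilde\alpha_i}\dd Z = -\sum_i\lambda_i$, and Stokes applied to $Y\dd X - X\dd Y$ to produce $-2\mathcal{A}(\gamma)$. The only difference is that you explicitly justify the Green formula for non-simple circuits via decomposition into simple loops, a point the paper takes for granted.
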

\begin{proof}
As $\dd(Y\dd X-X\dd Y)=-2\dd X\wedge \dd Y$, by Stokes theorem
\[\int_{\gamma} Y\dd X-X\dd 
Y=-2\sum_{i=1}^k\ind_j(\gamma)\area(A_j)=-2\mathcal{A}.\]
The left hand side can be transformed to
\[\int_{\gamma} Y\dd X-X\dd Y=\sum_{i=1}^n\int_{\alpha_i} Y\dd X-X\dd 
Y=\sum_{i=1}^n\int_{\tilde\alpha_i}Y\dd X-X\dd Y.\]
By Corollary~\ref{cor:istransverse} 
\[\int_{\tilde\alpha_i}Y\dd X-X\dd Y<\int_{\tilde\alpha_i}\dd Z.\]
The integral on the right hand side is the difference of the $Z$ coordinate of the endpoint of 
$\tilde\alpha_i$ and its starting point. As $\gamma$ is a closed curve, the
sum of those differences and the jumps of $\gamma$ is equal to zero, that is to 
say,
\[\sum_{i=1}^n\int_{\tilde\alpha_i} \dd Z+\sum_{i=1}^n \lambda_i=0.\]
This means that
\[\sum_{i=1}^n \lambda_i=-\sum_{i=1}^n\int_{\tilde\alpha_i} 
\dd Z<-\sum_{i=1}^n\int_{\tilde\alpha_i}Y\dd X-X\dd Y=2\mathcal{A}.\]
\end{proof}

As a simple application, we have the following fact that we will need later.
\begin{corollary}\label{cor:negative}
If $\gamma$ is negative, then $\gamma$ must have a crossing with negative jump.
\end{corollary}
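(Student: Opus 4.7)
The plan is to apply Proposition~\ref{prop:stokes} directly and essentially read off the conclusion. Let $\gamma$ be a negative circuit, and let $A_1,\ldots,A_k$ denote the bounded connected components of $\Rr^2\setminus\gamma$. By the assumption that $\gamma$ is negative, each index satisfies $\ind_j(\gamma)<0$, and since every $A_j$ is bounded and open, $\area(A_j)>0$. Consequently
\[\mathcal{A}(\gamma)=\sum_{j=1}^k\ind_j(\gamma)\area(A_j)\leq 0,\]
where equality can hold only in the degenerate case $k=0$, i.e.\ when $\gamma$ bounds no region at all.

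Proposition~\ref{prop:stokes} then gives
\[\sum_{i=1}^n\lambda_i<2\mathcal{A}(\gamma)\leq 0.\]
The right-hand side is non-positive and the inequality is strict, so the total sum of jumps is strictly negative. Therefore at least one jump $\lambda_i$ must itself be negative.

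It remains to observe that every point $w_i$ where a jump is recorded is indeed a crossing of $D_r$. Recall that $w_i$ is the endpoint of a maximal smooth arc $\alpha_i\subset\gamma$; if $w_i$ were not a double point of $D_r$, then $\alpha_i$ and $\alpha_{i+1}$ would concatenate into a single smooth arc, contradicting the maximality used to define the decomposition. Thus $w_i$ is a crossing of $D_r$, and we have produced the required crossing with negative jump.

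There is no significant obstacle here: the statement is a one-step corollary of Proposition~\ref{prop:stokes}, with the only mild subtlety being to check that strict inequality persists even when $\gamma$ encloses no bounded region (so that $\mathcal{A}(\gamma)=0$), which is guaranteed by the strict inequality already built into Proposition~\ref{prop:stokes}.
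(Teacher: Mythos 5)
Your proof is correct and is exactly the argument the paper intends: the corollary is stated there as "a simple application" of Proposition~\ref{prop:stokes} with no written proof, and your reading-off of $\mathcal{A}(\gamma)\le 0$ from the negativity of all the indices, followed by the strict inequality forcing some $\lambda_i<0$ at a genuine crossing, is the expected one-step deduction. The extra care about the degenerate case and about jump points being double points is harmless and correct (indeed, at a non-crossing point the two lifts agree, so any nonzero jump automatically sits at a crossing).
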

In particular, we have:
\begin{corollary}\label{nonegativecircles}
A negatively oriented unknot pictured in Figure~\ref{fig:negativecircle}
is \textbf{not} the projection of any $\Cc$-link. It is not a component of the complex projection of any $\Cc$-link either. 
\end{corollary}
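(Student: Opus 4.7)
The plan is to derive an immediate contradiction from Proposition~\ref{prop:stokes} (or equivalently from Corollary~\ref{cor:negative}) applied to the circuit $\gamma$ consisting of the negatively oriented unknot itself. The argument is extremely short, so the real content of the proof is just identifying the right circuit and verifying that the sum of jumps vanishes.

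First I would note that $\gamma$ is a simple smooth closed curve with no self-crossings. Viewed as a circuit in the sense of Section~\ref{sec:stereo}, it decomposes into a single smooth closed arc with no non-smooth points that could serve as endpoints $w_i$. Consequently there are no jumps at all, so $\sum_i \lambda_i = 0$. Next I would compute $\mathcal{A}(\gamma)$. Negative orientation means that the unique bounded region $A_1$ enclosed by $\gamma$ satisfies $\ind_1(\gamma) = -1$, so that $\mathcal{A}(\gamma) = -\area(A_1) < 0$. Substituting into Proposition~\ref{prop:stokes} yields the impossible inequality
\[
0 \;=\; \sum_{i=1}^n \lambda_i \;<\; 2\mathcal{A}(\gamma) \;<\; 0,
\]
which proves the first assertion. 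Equivalently, Corollary~\ref{cor:negative} demands a crossing of $\gamma$ with negative jump, whereas our $\gamma$ has no crossings at all.

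For the second assertion, I would simply observe that if $\gamma$ appears as a simple smooth circle sitting inside some larger diagram $D_r$, the argument goes through unchanged. The key point is that at every point of $\gamma$, including points where another component of $D_r$ crosses over or under $\gamma$, the circuit $\gamma$ itself remains smooth. Hence no additional non-smooth endpoints $w_i$ are created on $\gamma$, the sum of jumps is still zero, and the same contradiction arises.

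The only subtlety worth spelling out, and the closest thing to an obstacle, is confirming that the jumps $\lambda_i$ register only non-smooth points of $\gamma$ and not crossings of $\gamma$ with other parts of the diagram. This is immediate from the definition of $\lambda_i$, which is tied to the decomposition of $\gamma$ into its maximal smooth arcs, so I do not anticipate any real technical difficulty.
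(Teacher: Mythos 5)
Your proposal is correct and is essentially the paper's own argument: the paper derives this corollary directly from Corollary~\ref{cor:negative} (itself an application of Proposition~\ref{prop:stokes}), namely that a negative circuit must contain a crossing with negative jump, whereas a smooth negatively oriented circle — whether isolated or embedded in a larger diagram — has no turning points, hence all jumps vanish and $0<2\mathcal{A}(\gamma)<0$ is absurd. Your remark that transverse crossings of other strands do not create jump points on $\gamma$ is exactly the observation needed for the second assertion.
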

\begin{figure}[h]
\usetikzlibrary{arrows,decorations.markings}
\begin{tikzpicture}[scale=1]

%


\newcommand{\knot}{
(0,0)
.. controls +(-1,1) and +(-0.5,-1) .. (0,2)
.. controls +(0.5,1) and +(-1,0.7) .. (2,2)
.. controls +(1,-0.7) and +(1,1) .. (1.5,0.5)
.. controls +(-1,-1) and +(1,-1) .. (0,0)
}

\begin{scope}[rotate=-10]
\draw[thick, black] \knot;

\tikzset{myptr/.style={decoration={markings,mark=at position 1 with %
    {\arrow[scale=2.5,>=latex]{<}}},postaction={decorate}}}
    
\draw[myptr] (0,0)--+(+0.1,-0.1) node[below left]{$\gamma$};
 \end{scope}

\end{tikzpicture}
  \caption{Unknot projection with negative 
orientation.}\label{fig:negativecircle}
\end{figure}
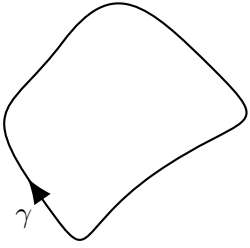

\subsection{Complex stereographic projection vs. standard stereographic projection}\label{sec:standardstereo}
We will show on an example that there is an advantage of working with complex stereographic projection over working
with a standard stereographic projection.

The standard stereographic projection $\wt{\Psi}_r\colon S^3_r\setminus\{(0,0,-r,0)\}\to\Rr^3$ is given by the formula
\[(x_1,x_2,y_1,y_2)\mapsto \left(\frac{x_1}{y_1+r},\frac{x_2}{y_1+r},\frac{y_2}{y_1+r}\right).\]
This projection can be composed with a projection to the first two coordinates, which we denote by $\wt{\pi}_r$.
Let $\wt{D}_r$ be the diagram of $\wt{\Psi}_r(L_r)$ under the projection $\wt{\pi}_r$.

It turns out that Proposition~\ref{prop:stokes} is no longer true for $\wt{D}_r$.
An example, calculated numerically by Sagemath \cite{sage},
is given by a curve $\mathcal{C}$ given in parametric form\footnote{with
$a_3=0.2563-0.1587\ii$, $a_2=-0.1048+0.0393\ii$, $a_1=-0.2986-0.4498\ii$, $a_0=-0.2052+0.0618\ii$, $b_3=-0.4786-0.2976\ii$, $b_2=0.0099+0.1586\ii$,
$b_1=-0.4694-0.1366\ii$ and $b_0=-0.244+0.3914\ii$ and $r=0.8$} by $x(t)=a_3t^3+a_2t^2+a_1t+a_0$, $y(t)=b_3t^3+b_2t^2+b_1t+b_0$. We consider the link $L=L_{r}$ 
obtained by intersecting $\mathcal{C}$ with the sphere of radius~$r$.
The link $L$ is trivial. 
The diagram $\wt{D}=\wt{\pi}_{r}(\wt{\Psi}_{r}(L))$
is drawn in Figure~\ref{fig:stupidlink}. The two crossings of $\wt{D}$ are positive.
There are three Seifert circuits of $\wt{D}$, but one of them has orientation opposite to the two others. It follows that regardless of
the orientation of $\wt{D}$, it has a negatively oriented Seifert circuit with positive jumps only.

Further computer experiments with this $\mathcal{C}$ indicate that Theorem~\ref{th:reid1} does not hold for the standard stereographic projection either.

\begin{figure}[h]
  \input{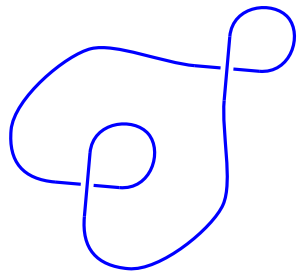}
\caption{The diagram, obtained by the projection map $\wt{\pi}_{r}$, of the link $L$ from Section~\ref{sec:standardstereo} under the standard stereographic projection.}\label{fig:stupidlink}
\end{figure}


\section{Changes of the diagram when $r$ changes}\label{sec:changes}

The radius $r$ of the sphere $S_r$ can be considered as a parameter. In this way obtain a 
family of links $L_r$ and
of diagrams $D_r$. It is known that if for given $r_1$, $r_2$ the curve
$\mathcal{C}$ is smoothly transverse to all spheres $S_r$ for $r\in[r_1,r_2]$, then 
the links $L_{r_1}$ and $L_{r_2}$
are isotopic. The diagrams $D_{r_1}$ and $D_{r_2}$ do not have to be isotopic; 
there might be some
Reidemeister moves relating the first one to the second one. While the moves $\reid_2$
and $\reid_3$ are not local and therefore harder to control, 
we will identify the $\reid_1$ moves as occurring precisely when the map $\pi_r$ 
restricted to $L_r$ has critical points. First we describe the non-transversality 
points in greater detail.

Throughout this section we shall assume that $\mathcal{C}$ is generic. A precise
statement of genericity is given in Section~\ref{sec:afewwords}. We prove there
that genericity is a dense condition so it 
can be achieved by a small perturbation of $\mathcal{C}$. There is no loss of generality when restricting only to generic curves,
since a diagram $D_r$ of any $\Cc$-link can be perturbed to a diagram of a $\Cc$-link 
with $\mathcal{C}$ generic.

\subsection{The non-transversality points}

The points of non-transversality can be easily computed. To this end let us 
introduce the function $Jf(x,y)$ by the formula:
\begin{equation}\label{eq:Jf}
Jf(x,y) = 
\left|\begin{matrix} \frac{\partial f}{\partial x} & \frac{\partial f}{\partial 
y}\\ \ol{x} &\ol{y}\end{matrix}\right|.
\end{equation}
The intersection $\{Jf(x,y)=0\}$ with $\mathcal{C}$ is the set of points 
where the function $|x|^2+|y|^2$, that is ``square of the distance to origin'', restricted to $\mathcal{C}$ 
has a critical point. We 
assume that the intersection of the two curves 
$\mathcal{C}$ and $\{Jf(x,y)=0\}$ is smoothly transverse.
 
Let $z=(x_0,y_0)$ belong to $\mathcal{C}\cap\{Jf(x,y)=0\}$. When $r$ crosses the 
critical value $\|z\|$, one of the two situations
can occur: either $z$ is a local minimum of the distance function on 
$\mathcal{C}$, and then $L_{\|z\|+\varepsilon}$ differs
from $L_{\|z\|-\varepsilon}$ by adding an unlinked unknot (here $\varepsilon>0$ is a 
small parameter); 
or $z$ is a saddle point, so $L_{\|z\|+\varepsilon}$ arises from 
$L_{\|z\|-\varepsilon}$ by a single 1--handle attachment. As $|x|^2+|y|^2$ is plurisubharmonic,
it does not have any local maxima on the complex curve $\mathcal{C}$.

\begin{definition}
An \emph{$\Oo$ move} on a link $L$ adds to $L$ an unlinked unknot, which becomes a round
circle disjoint from the rest of the diagram under the complex stereographic projection. 
An \emph{$\Ii$  move} adds a 1--handle to $L$. If we keep track
of the orientations, we can specify an $\Oo^{\oplus}$ and an $\Oo^{\ominus}$ move 
consisting of adding a positive or a negative circle, respectively;
see Figure~\ref{fig:oooplus}. We can also distinguish two types of an $\Ii$ move, 
that is, $\Ii^\oplus$ and $\Ii^\ominus$; see  Figure~\ref{fig:iiiplus}.
\end{definition}
\begin{figure}[h]
\begin{tikzpicture}[scale=0.8]

\draw[black, thick] circle(1cm);
\node at (0,-1.5) {$\oplus$};
\draw[->,>=latex,black, very thick] (0,1)--+(-0.3,0);

\begin{scope}[xshift=4cm]
\draw[black, thick] circle(1cm);
\node at (0,-1.5) {$\ominus$};
\draw[->,>=latex,black, very thick] (0,1)--+(0.3,0);

\end{scope}

\end{tikzpicture}
\caption{The positive/negative orientation.}\label{fig:oooplus}
\end{figure}
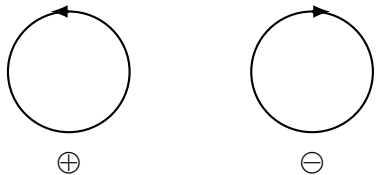

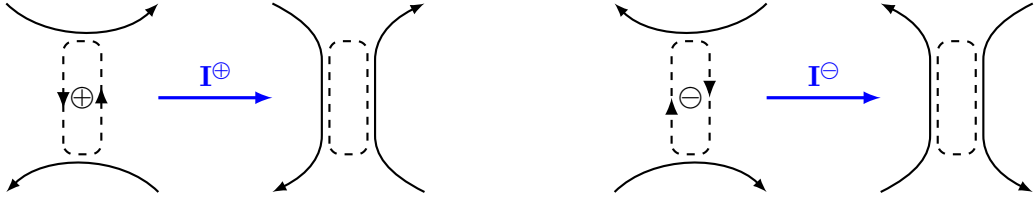
\begin{figure}[h]
\begin{tikzpicture}[scale=0.5]

\draw[dashed, black, thick, rounded corners] (-0.5,-1.5) rectangle +(1,3);
\draw[->,>=latex,thick, black]  (-2,2.5).. controls +(1,-1) and +(-1,-1) .. (2,2.5);
\draw[<-,>=latex,thick, black]  (-2,-2.5).. controls +(1,1) and +(-1,1) .. (2,-2.5);

\draw[->,>=latex,black, very thick] (0.5,0)--+(0,0.3);
\draw[->,>=latex,black, very thick] (-0.5,0)--+(0,-0.3);
\node at (0,0) {$\oplus$};

\begin{scope}[xshift=7cm]
\draw[dashed, black, thick, rounded corners] (-0.5,-1.5) rectangle +(1,3);
\draw[<-,>=latex,thick, black]  (-2,-2.5).. controls +(1,0.5) and +(0,-0.5) .. (-0.7,-1) -- (-0.7,1)
.. controls +(0,0.5) and +(1,-0.5) ..(-2,2.5);
\draw[->,>=latex,thick, black]  (2,-2.5).. controls +(-1,0.5) and +(0,-0.5) .. (0.7,-1) -- (0.7,1)
.. controls +(0,0.5) and +(-1,-0.5) ..(2,2.5);
\end{scope}
\draw[->,>=latex,blue, very thick] (2,0)--+(3,0) node[midway, above] {$\mathbf{I}^\oplus$};

\begin{scope}[xshift=16cm]
\draw[dashed, black, thick, rounded corners] (-0.5,-1.5) rectangle +(1,3);
\draw[<-,>=latex,thick, black]  (-2,2.5).. controls +(1,-1) and +(-1,-1) .. (2,2.5);
\draw[->,>=latex,thick, black]  (-2,-2.5).. controls +(1,1) and +(-1,1) .. (2,-2.5);

\draw[<-,>=latex,black, very thick] (0.5,0)--+(0,0.3);
\draw[<-,>=latex,black, very thick] (-0.5,0)--+(0,-0.3);
\node at (0,0) {$\ominus$};

\begin{scope}[xshift=7cm]
\draw[dashed, black, thick, rounded corners] (-0.5,-1.5) rectangle +(1,3);
\draw[->,>=latex,thick, black]  (-2,-2.5).. controls +(1,0.5) and +(0,-0.5) .. (-0.7,-1) -- (-0.7,1)
.. controls +(0,0.5) and +(1,-0.5) ..(-2,2.5);
\draw[<-,>=latex,thick, black]  (2,-2.5).. controls +(-1,0.5) and +(0,-0.5) .. (0.7,-1) -- (0.7,1)
.. controls +(0,0.5) and +(-1,-0.5) ..(2,2.5);
\end{scope}
\draw[->,>=latex,blue, very thick] (2,0)--+(3,0) node[midway, above] {$\mathbf{I}^\ominus$};

 \end{scope}    
\end{tikzpicture}
\caption{The positive/negative $1$-handle attachment.}\label{fig:iiiplus}
\end{figure}

\begin{proposition}\label{prop:localindex}
The local intersection index of $\mathcal{C}$ and $\{Jf(x,y)=0\}$ at $z$ is $1$ 
if and only if $z$ is a saddle point (of index $1$). 
It corresponds to a move $\Ii$, an attachment of a $1$-handle.
  
If the intersection index is $-1$, then $z$ 
is a local minimum (of Morse index $0$). It corresponds to a move 
$\Oo$, the birth of a component.
\end{proposition}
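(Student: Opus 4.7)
The plan is to turn the question into a local computation on a holomorphic parametrization of $\mathcal{C}$ near $z$ and to read off both the Morse index of the restriction of $|x|^2+|y|^2$ to $\mathcal{C}$ and the intersection index with $\{Jf=0\}$ from the same second-order Taylor data of a single real-valued function. Concretely, pick a holomorphic parametrization $t\mapsto(x(t),y(t))$ of $\mathcal{C}$ with $(x(0),y(0))=z$. Implicit differentiation of $f(x(t),y(t))\equiv 0$ gives $(x',y')=\lambda(t)\cdot(f_y,-f_x)$ for a nowhere-vanishing holomorphic function $\lambda$. Set $\phi(t,\bar t)=|x(t)|^2+|y(t)|^2$ and $g(t,\bar t)=Jf(x(t),y(t))$. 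A short computation using $\phi_t=x'\bar x+y'\bar y$ and the tangent formula above yields $\phi_t=-\lambda\cdot g$; this identity is the key observation of the whole argument, since it links the two objects whose signs we need to compare.

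The Morse index comes from the real Hessian. Standard Wirtinger formulas give $\phi_{t\bar t}=|x'|^2+|y'|^2>0$ and $\det\mathrm{Hess}_\Rr(\phi)=4(\phi_{t\bar t}^2-|\phi_{tt}|^2)$. Because $\phi_{t\bar t}>0$ the trace of the Hessian is strictly positive, so local maxima are excluded (reflecting plurisubharmonicity); the critical point $z$ is a local minimum when $\phi_{t\bar t}>|\phi_{tt}|$ and a saddle when $\phi_{t\bar t}<|\phi_{tt}|$. This identifies the Morse index, equivalently the handle index.

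The intersection index of $\mathcal{C}$ with $\{Jf=0\}$ at $z$ equals the local topological degree at $0$ of the smooth map $g\colon(\Cc,0)\to(\Cc,0)$, once $\{Jf=0\}$ is oriented as the preimage of a regular value of the complex-valued map $Jf\colon\Cc^2\to\Cc$. For an isolated non-degenerate zero of a smooth $\Cc$-valued function of one complex variable, this local degree is $\sgn(|g_t(0)|^2-|g_{\bar t}(0)|^2)$. Differentiating $g=-\phi_t/\lambda$ at a critical point, where $\phi_t$ vanishes and $\lambda$ is holomorphic and non-zero, gives $g_t(0)=-\phi_{tt}(0)/\lambda(0)$ and $g_{\bar t}(0)=-\phi_{t\bar t}(0)/\lambda(0)$, and therefore
\[|g_t(0)|^2-|g_{\bar t}(0)|^2=\frac{|\phi_{tt}(0)|^2-\phi_{t\bar t}(0)^2}{|\lambda(0)|^2}=-\frac{\det\mathrm{Hess}_\Rr(\phi)(0)}{4|\lambda(0)|^2}.\]
Hence the intersection index is $+1$ exactly at saddle points (giving the $1$-handle move $\Ii$) and $-1$ exactly at local minima (giving the birth move $\Oo$), which is the claim.

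The calculus above is formal once the parametrization is chosen; the main obstacle is the orientation bookkeeping. One must commit to the convention that $\{Jf=0\}$ is oriented as the preimage of a regular value of the complex-valued map $Jf$, and then verify that the local-degree formula used in the third paragraph is consistent with this convention. Funnelling all signs through the single auxiliary function $\phi$ and its multiplier $\lambda$ concentrates the sign bookkeeping into one identity and makes the final comparison with $\det\mathrm{Hess}_\Rr(\phi)$ unambiguous.
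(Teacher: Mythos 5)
Your proof is correct, and while the Morse-index half coincides with the paper's computation (your Wirtinger-derivative Hessian $4(\phi_{t\bar t}^2-|\phi_{tt}|^2)$ is exactly the paper's $\det H=h^2-4k^2-4\ell^2$), the intersection-index half takes a genuinely cleaner route. The paper linearizes $Jf$ in the ambient coordinates using the second-order Taylor coefficients $c,d,e$ of $f$, restricts the linearization to the tangent direction $(\alpha,\beta)$, and only then uses the relation $a\gamma+b\delta+c\alpha^2+d\alpha\beta+e\beta^2=0$ to convert back to the parametrization data $(\gamma,\delta)$; the agreement of the two signs appears there as a coincidence of two parallel computations. Your identity $\phi_t=-\lambda\cdot Jf|_{\mathcal{C}}$ short-circuits this: it exhibits $Jf$ restricted to $\mathcal{C}$ as the $\partial/\partial t$-derivative of the distance-squared function up to a nonvanishing holomorphic factor, so the intersection index is (up to the orientation reversal coming from conjugation) the Poincar\'e--Hopf index of $\grad\phi$, and the equivalence with the Morse index is explained rather than verified. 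What you give up is only that the orientation conventions (preimage orientation on $\{Jf=0\}$, the identification of the local intersection number with the local degree of $Jf|_{\mathcal{C}}$) must be fixed once and checked; you flag this correctly, and the fact that your final signs match the paper's explicit linearization confirms the bookkeeping. The remaining assertions --- that the trace $4\phi_{t\bar t}>0$ excludes maxima and that Morse index $0$ resp.\ $1$ of the distance function gives an $\Oo$ resp.\ $\Ii$ move --- are used in the paper in exactly the same way.
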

The proof of Proposition~\ref{prop:localindex} is technical and is postponed 
until Section~\ref{sec:proofoflocalindex}.

The fact that $L_r$ is positively contact transverse implies immediately the following fact.
\begin{lemma}\label{noominus}
The move $\Oo^{\ominus}$ is impossible.
\end{lemma}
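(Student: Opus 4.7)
The plan is to apply Corollary~\ref{nonegativecircles} directly. By the definition of the $\mathbf{O}$ move (see the definition preceding Figure~\ref{fig:oooplus}), performing an $\mathbf{O}$ move enlarges the diagram by a round circle that is disjoint from the rest of $D_r$. If this move were an $\mathbf{O}^{\ominus}$ move, then for $r$ slightly greater than the critical radius $\|z\|$, the diagram $D_r$ would contain, as a connected component, a negatively oriented round circle. Since $L_r$ for such $r$ is a $\Cc$-link (the sphere $S_r$ is transverse to $\mathcal{C}$ just past the local minimum), this would produce a negatively oriented round circle inside the complex projection of a $\Cc$-link, which is precisely what Corollary~\ref{nonegativecircles} forbids.

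For completeness it is worth tracing through the reason why this is impossible at the level of Proposition~\ref{prop:stokes}. Take $\gamma$ to be the would-be round circle, oriented negatively. Then $\gamma$ is a smooth simple closed curve with no self-crossings, so its collection of jumps is empty and $\sum_{i=1}^{n}\lambda_i = 0$. On the other hand, $\gamma$ bounds a single region $A_1$ with $\ind_1(\gamma) = -1$, so
\[
\mathcal{A}(\gamma) = -\area(A_1) < 0.
\]
Proposition~\ref{prop:stokes} would then give $0 < 2\mathcal{A}(\gamma) < 0$, a contradiction.

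There is no real obstacle here: the lemma is essentially a one-line consequence of the Stokes-type inequality established earlier, once one observes that the $\mathbf{O}$ move produces a smooth round circle with no crossings. The only point deserving a brief check is that the new component indeed inherits the positive transversality condition of Corollary~\ref{cor:istransverse}, which it does because $\mathcal{C}$ is transverse to $S_r$ for $r$ slightly larger than $\|z\|$, so the entire link $L_r$ (including the newly born component) is a $\Cc$-link and hence positively transverse to the standard contact structure.
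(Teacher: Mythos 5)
Your proof is correct and follows the paper's own argument: the paper also disposes of $\Oo^{\ominus}$ in one line by invoking Corollary~\ref{nonegativecircles}. Your extra unwinding of the contradiction via Proposition~\ref{prop:stokes} (no jumps, $\mathcal{A}(\gamma)<0$) is accurate but just re-proves the corollary.
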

\begin{proof}
The move creates an unknotted negatively oriented circle in a diagram $D_r$. 
But this is impossible by Corollary~\ref{nonegativecircles}.
\end{proof}
In Corollary~\ref{cor:ominus}  we will show that $\Ii^{\ominus}$ is also impossible.

\begin{lemma}
\label{lem:critproj}
The critical points of the projection $\pi_r : L_r \to \Rr^2$
defined by $\pi_r = \pi'\circ \Psi_r$ are:
\begin{itemize}
  \item the points of non-transversality of $\mathcal{C}$ with $S_r$ given by the 
equation 
  $\begin{vmatrix}
\frac{\partial f}{\partial x}  & \frac{\partial f}{\partial y} \\
\ol x & \ol y \\
\end{vmatrix} = 0$;

  \item and the points satisfying 
  $x \frac{\partial f}{\partial x} + (r+y)\frac{\partial f}{\partial y} = 0$.
\end{itemize}
\end{lemma}

The proof of Lemma~\ref{lem:critproj} is given in 
Section~\ref{sec:proofofcritproj}.

\begin{example}\label{ex:weirdexample}
Let $f(x,y) = ax+by+c$. 
\begin{enumerate}
  \item The points of non-transversality are given by equations:
$ax+by+c=0$  and  $a\ol y - b \ol x = 0$.
There is a unique solution:
$x = \frac{-\ol a c}{|a|^2+|b|^2}$, $y = \frac{-\ol b c}{|a|^2+|b|^2}$
for $r = \frac{|c|}{\sqrt{|a|^2+|b|^2}}$.

  \item The other critical points of $\pi_r$ are given by equations
  $x \frac{\partial f}{\partial x} + (r+y)\frac{\partial f}{\partial y} = 0$
  and $f(x,y)=0$
  i.e.
  $$ax+by + br= 0 \quad \text{ and }\quad ax+by+c=0$$
  For $r \neq \frac{c}{b}$, there are  no critical points.
  For $r = \frac{c}{b}$ (provided $b\neq0$) 
  any point of $L_r$ is a critical point of $\pi_r$.
  In fact the projection of the whole link $L_r$ by $\pi_r$ is
  just the point $-\frac{c}{a}$ (provided $a\neq0$).  
  
\end{enumerate}

The fact that for $r=\frac{c}{b}$ the whole of $L_r$ is mapped to a point reflects
the lack of genericity of the polynomial $f$. It is related to the fact
that the second derivative of $f$ vanishes at critical points of $\pi_r$ (actually
it vanishes everywhere). See Section~\ref{sec:afewwords} for a discussion of genericity issues.
\end{example}


\subsection{The first Reidemeister move}

It will turn out that the critical points of $\pi_r$ given by 
$x \frac{\partial f}{\partial x} + (r+y)\frac{\partial f}{\partial y} = 0$ in
Lemma~\ref{lem:critproj} correspond to the first Reidemeister move on the 
diagram. Of course, as Example~\ref{ex:weirdexample} suggests, we will have to
impose a genericity condition on $\mathcal{C}$.

The first Reidemeister move $\reid_1$ is a creation (or disappearance)
of a loop in a diagram of the link.
\begin{figure}[h]
  \def\myepsilon{0.2}
\newcommand{\pluscrossing}{
\draw[thick, black] (-1,1)--(1,-1);
\draw[thick, black] (-1,-1)--(-\myepsilon,-\myepsilon);
\draw[thick, black] (\myepsilon,\myepsilon)--(1,1);
}
\newcommand{\minuscrossing}{
\draw[thick, black] (1,1)--(-1,-1);
\draw[thick, black] (-1,1)--(-\myepsilon,\myepsilon);
\draw[thick, black] (\myepsilon,-\myepsilon)--(1,-1);
}
\newcommand{\knotbot}{
(-1,-1)
.. controls +(-1,-1) and +(-1,0) .. (0,-3.5)
.. controls +(1,0) and +(1,-1) .. (1,-1)
}
\newcommand{\reidtopleft}{
(-1,1) .. controls +(-1,1) and +(1,0) .. (-6,2)
}
\newcommand{\reidtopright}{
(1,1) .. controls +(1,1) and +(-1,0) .. (6,2)
}

\newcommand{\knotsmooth}{
(-6,2) .. controls +(3,0) and +(-2,0) .. 
(0,0) .. controls +(2,0) and +(-3,0) .. (6,2)
}

\begin{tikzpicture}[scale=0.3]

\begin{scope}[rotate=90]

\begin{scope}[yshift=6cm]
\draw[thick, black] \knotsmooth;
\end{scope}

\draw[thick, black] \knotbot;
\draw[thick, black] \reidtopleft;
\draw[thick, black] \reidtopright;
\pluscrossing;

\draw[->,>=latex,blue, very thick] (1,5)--+(0,-3) node[midway, above] {$\mathbf{\Omega}_1^{\uparrow}$};
\draw[<-,>=latex,blue, very thick] (-1,5)--+(0,-3) node[midway, below] {$\mathbf{\Omega}_1^{\downarrow}$};
 \end{scope}

\end{tikzpicture}
  \caption{The first Reidemeister move.}
\end{figure}
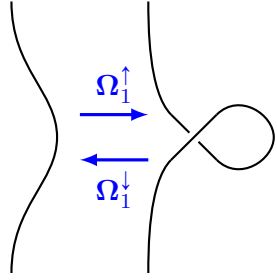
As in \cite{Suw} we distinguish between eight types of moves, according to:
\begin{itemize}
  \item the creation/disappearance of a loop: $\uparrow/\downarrow$
  \item the sign  plus/minus of the crossing: $\boxplus/\boxminus$
  \item the positive/negative orientation of the loop: $\oplus/\ominus$
\end{itemize}

It is clear from the description, how the writhe and the rotation
number of a diagram change upon performing any of these moves. The data is gathered in Table~\ref{tab:changes}.

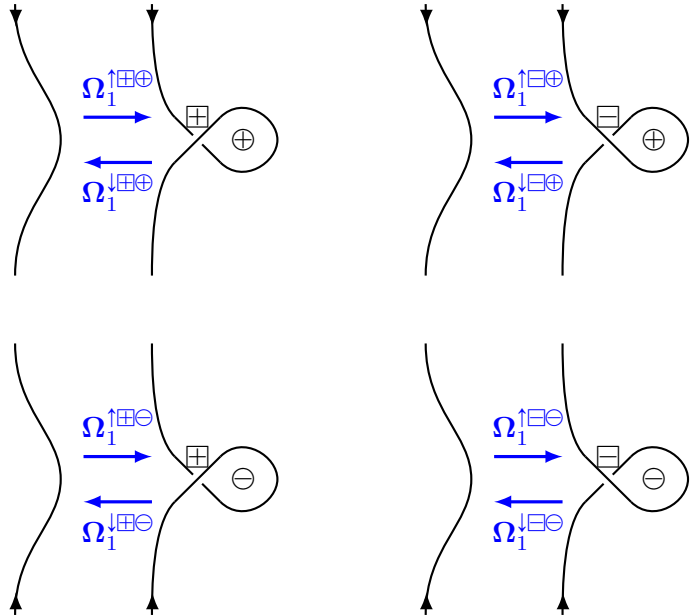
\begin{figure}[h]
  \def\myepsilon{0.2}
\newcommand{\pluscrossing}{
\draw[thick, black] (-1,1)--(1,-1);
\draw[thick, black] (-1,-1)--(-\myepsilon,-\myepsilon);
\draw[thick, black] (\myepsilon,\myepsilon)--(1,1);
}
\newcommand{\minuscrossing}{
\draw[thick, black] (1,1)--(-1,-1);
\draw[thick, black] (-1,1)--(-\myepsilon,\myepsilon);
\draw[thick, black] (\myepsilon,-\myepsilon)--(1,-1);
}
\newcommand{\knotbot}{
(-1,-1)
.. controls +(-1,-1) and +(-1,0) .. (0,-3.5)
.. controls +(1,0) and +(1,-1) .. (1,-1)
}
\newcommand{\reidtopleft}{
(-1,1) .. controls +(-1,1) and +(1,0) .. (-6,2)
}
\newcommand{\reidtopright}{
(1,1) .. controls +(1,1) and +(-1,0) .. (6,2)
}

\newcommand{\knotsmooth}{
(-6,2) .. controls +(3,0) and +(-2,0) .. 
(0,0) .. controls +(2,0) and +(-3,0) .. (6,2)
}

\begin{tikzpicture}[scale=0.3]

\begin{scope}[rotate=90]

\begin{scope}[yshift=6cm]
\draw[<-,>=latex,black, very thick] (5,1.97)--+(0.3,0); 
\draw[thick, black] \knotsmooth;
\end{scope}

\draw[thick, black] \knotbot;
\draw[thick, black] \reidtopleft;
\draw[thick, black] \reidtopright;
\pluscrossing;
\draw[<-,>=latex,black, very thick] (5,1.98)--+(0.3,0); 

\draw[->,>=latex,blue, very thick] (1,5)--+(0,-3) node[midway, above] {$\mathbf{\Omega}_1^{\uparrow\boxplus\oplus}$};
\draw[<-,>=latex,blue, very thick] (-1,5)--+(0,-3) node[midway, below] {$\mathbf{\Omega}_1^{\downarrow\boxplus\oplus}$};

\node at (1,0) {$\boxplus$};
\node at (0,-2) {$\oplus$};
 \end{scope}

\begin{scope}[rotate=90, yshift=-18cm]

\begin{scope}[yshift=6cm]
\draw[<-,>=latex,black, very thick] (5,1.97)--+(0.3,0); 
\draw[thick, black] \knotsmooth;
\end{scope}

\draw[thick, black] \knotbot;
\draw[thick, black] \reidtopleft;
\draw[thick, black] \reidtopright;
\minuscrossing;
\draw[<-,>=latex,black, very thick] (5,1.98)--+(0.3,0); 

\draw[->,>=latex,blue, very thick] (1,5)--+(0,-3) node[midway, above] {$\mathbf{\Omega}_1^{\uparrow\boxminus\oplus}$};
\draw[<-,>=latex,blue, very thick] (-1,5)--+(0,-3) node[midway, below] {$\mathbf{\Omega}_1^{\downarrow\boxminus\oplus}$};

\node at (1,0) {$\boxminus$};
\node at (0,-2) {$\oplus$};
 \end{scope}

\begin{scope}[rotate=90, xshift=-15cm]

\begin{scope}[yshift=6cm]
\draw[<-,>=latex,black, very thick] (-5,1.97)--+(-0.3,0); 
\draw[thick, black] \knotsmooth;
\end{scope}

\draw[thick, black] \knotbot;
\draw[thick, black] \reidtopleft;
\draw[thick, black] \reidtopright;
\pluscrossing;
\draw[<-,>=latex,black, very thick] (-5,1.98)--+(-0.3,0); 

\draw[->,>=latex,blue, very thick] (1,5)--+(0,-3) node[midway, above] {$\mathbf{\Omega}_1^{\uparrow\boxplus\ominus}$};
\draw[<-,>=latex,blue, very thick] (-1,5)--+(0,-3) node[midway, below] {$\mathbf{\Omega}_1^{\downarrow\boxplus\ominus}$};

\node at (1,0) {$\boxplus$};
\node at (0,-2) {$\ominus$};
 \end{scope}

\begin{scope}[rotate=90, yshift=-18cm, xshift=-15cm]

\begin{scope}[yshift=6cm]
\draw[<-,>=latex,black, very thick] (-5,1.97)--+(-0.3,0); 
\draw[thick, black] \knotsmooth;
\end{scope}

\draw[thick, black] \knotbot;
\draw[thick, black] \reidtopleft;
\draw[thick, black] \reidtopright;
\minuscrossing;
\draw[<-,>=latex,black, very thick] (-5,1.98)--+(-0.3,0); 

\draw[->,>=latex,blue, very thick] (1,5)--+(0,-3) node[midway, above] {$\mathbf{\Omega}_1^{\uparrow\boxminus\ominus}$};
\draw[<-,>=latex,blue, very thick] (-1,5)--+(0,-3) node[midway, below] {$\mathbf{\Omega}_1^{\downarrow\boxminus\ominus}$};

\node at (1,0) {$\boxminus$};
\node at (0,-2) {$\ominus$};

 \end{scope}
       
\end{tikzpicture}
  \caption{The eight types of the first Reidemeister move.}\label{fig:R1moves}
\end{figure}

The following result proves the first part of Theorem~\ref{main3} from the introduction.
\begin{theorem}
\label{th:reid1}
Suppose that $(x_0,y_0) \in L_{r_0}$ is a critical point of the projection  
$\pi_{r_0}'$, but $(x_0,y_0)$ is a point of a smooth transverse intersection of $\mathcal{C}$
and $S_{r_0}$.
Then for $r$ near $r_0$ (going from $r_0-\epsilon$ to $r_0+\epsilon$, 
$\epsilon>0$) 
the topological type of the link is unchanged, but
the projection $D_r = \pi_r(L_r) \subset \Rr^2$ changes by a Reidemeister
move of type $\reid_1^{\uparrow\boxplus\oplus}$ or 
$\reid_1^{\downarrow\boxminus\ominus}$.

\end{theorem}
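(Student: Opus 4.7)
\emph{Proof plan.} The plan is to reduce to a local model near $(x_0, y_0)$ via a holomorphic parametrization of $\mathcal{C}$, and then to enumerate which Reidemeister 1 types are compatible with positive transversality.

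First, parametrize $\mathcal{C}$ near $(x_0, y_0)$ holomorphically as $t \mapsto (x(t), y(t))$ with $t \in \Cc$ and $(x(0), y(0)) = (x_0, y_0)$. Since $\mathcal{C}$ is transverse to $S_{r_0}$ at $(x_0, y_0)$, transversality persists for $r$ near $r_0$, and $L_r$ is locally the real curve $\{|x(t)|^2 + |y(t)|^2 = r^2\}$, which we parametrize by a real coordinate $s$ compatible with the orientation of $L_r$. The critical point condition of Lemma~\ref{lem:critproj} means $\tfrac{d}{ds} \pi_{r_0}(L_{r_0})|_{s=0} = 0$; under the genericity hypothesis (Section~\ref{sec:afewwords}), the next-order derivatives are non-degenerate, so $\pi_{r_0}(L_{r_0})$ has an ordinary cusp at $s = 0$ and the parameter $r$ provides a versal unfolding of this cusp. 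Consequently, on exactly one side of $r_0$ the diagram $D_r$ develops a small self-crossing loop and on the other side $D_r$ is locally embedded. Since $L_r$ undergoes no handle attachment here (the intersection with $S_r$ stays transverse), the isotopy type of $L_r$ does not change and the transition of $D_r$ is a Reidemeister 1 move.

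To identify which of the eight types in Figure~\ref{fig:R1moves} can occur, we combine three sign constraints. (a) Positive transversality (Corollary~\ref{cor:istransverse}) forces $\dd Z > 0$ along $\tilde L_{r_0}$ at the critical point, since $\dd X = \dd Y = 0$ there; this determines the direction of the lift through the cusp. (b) Holomorphicity of $t \mapsto (x(t), y(t))$ forces the real Jacobian of the composite map defining $D_r$ to have a prescribed sign to leading order, which fixes the rotational sense of the emerging loop. (c) The small loop, viewed as a simple circuit, must satisfy Proposition~\ref{prop:stokes}, and Corollary~\ref{cor:negative} links its orientation to the sign of the jump at its single self-crossing. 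Tracking these signs through the cusp-unfolding model, the only consistent combinations of (direction of change in $r$, crossing sign, loop orientation) are $\reid_1^{\uparrow\boxplus\oplus}$ and $\reid_1^{\downarrow\boxminus\ominus}$.

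The main obstacle will be the explicit local verification of constraints (a)--(c): one must Taylor-expand $\Psi_r \circ (x(t), y(t))$ together with the implicit function $t(s, r)$, and check that each sign works out as claimed. This calculation is sensitive to the precise formula of the \emph{complex} stereographic projection --- as Section~\ref{sec:standardstereo} illustrates, the analogous statement fails for the real stereographic projection --- and is exactly the kind of technical computation the paper defers to Section~\ref{sec:proofs}.
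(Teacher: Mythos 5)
Your local set-up agrees with the paper's: a holomorphic parametrization $t\mapsto(x(t),y(t))$, the link appearing as the level curve $\{|x(t)|^2+|y(t)|^2=r^2\}$ in the $t$-plane, and the projection degenerating to the model $t\mapsto t^2$ at the critical parameter, so that the family $D_r$ passes through a cusp unfolding and the transition is an $\reid_1$ move. Your constraint (a) is correct and genuinely useful: since $\dd X=\dd Y=0$ on the tangent to $L_{r_0}$ at the critical point, positive transversality gives $\dd Z>0$ there, hence by continuity $Z$ is strictly increasing along the whole small loop; therefore the strand traversed later is the over-strand, which correlates the crossing sign with the loop orientation and leaves only the combinations $(\boxplus,\oplus)$ and $(\boxminus,\ominus)$. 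This eliminates four of the eight types.

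The remaining four types $\reid_1^{\uparrow\boxplus\oplus}$, $\reid_1^{\downarrow\boxplus\oplus}$, $\reid_1^{\uparrow\boxminus\ominus}$, $\reid_1^{\downarrow\boxminus\ominus}$ are not separated by your constraints (b) and (c), and this is a genuine gap. Constraint (c) is vacuous once (a) is in place: the jump at the kink's unique crossing equals $Z(\text{start})-Z(\text{end})<0$ automatically, so Proposition~\ref{prop:stokes} and Corollary~\ref{cor:negative} impose nothing further, for either loop orientation. Constraint (b) would have to carry the load, but it rests on a false premise: the projection $t\mapsto \pi_r(x(t),y(t))\sim c_r t^2$ is indeed holomorphic and orientation-preserving on $\Cc$, yet the rotational sense of the image loop is not determined by this alone --- it depends on which side of the branch point $t=0$ the level conic $\{|x(t)|^2+|y(t)|^2=r^2\}$ lies, and both orientations genuinely occur ($\oplus$ in the appearing case, $\ominus$ in the disappearing one). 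What is missing is precisely the correlation between the $\uparrow/\downarrow$ direction of the move and the loop orientation. In the paper this comes from the elliptic/hyperbolic dichotomy for the quadratic part of $|x(t)|^2+|y(t)|^2$ in the parameter $t$ (Steps 2 and 4--6 of Section~\ref{sec:proofofreid1}): the single sign $\sgn(h^2-4k^2-4\ell^2)$ decides simultaneously whether increasing $r$ pushes the level conic onto the side of $t=0$ where the loop exists (hence $\uparrow$ versus $\downarrow$) and where the conic sits relative to the branch point (hence $\oplus$ versus $\ominus$, and then $\boxplus$ versus $\boxminus$ via (a)). Without an argument of this kind you cannot exclude $\reid_1^{\downarrow\boxplus\oplus}$ and $\reid_1^{\uparrow\boxminus\ominus}$; note that these two also preserve $\rot-\writhe$, so no consistency check against Table~\ref{tab:changes} or Theorem~\ref{th:charlink} can rule them out either.
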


Both moves $\reid_1^{\uparrow\boxplus\oplus}$ or 
$\reid_1^{\downarrow\boxminus\ominus}$
can actually happen.
The proof of Theorem~\ref{th:reid1} is given in Section~\ref{sec:proofofreid1}.
The converse is also true, a Reidemeister move of type $\reid_1$
implies a critical point of the projection. More precisely we have the following result.

\begin{proposition}
If a family of diagrams $D_r$ makes a Reidemeister move $\reid_1$
for some radius $r_0$, then the projection $\pi_{r_0}$ has a critical point.
\end{proposition}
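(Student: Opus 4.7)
The plan is to prove the contrapositive: assuming $\pi_{r_0}$ has no critical point on $L_{r_0}$, I will show that $D_r$ cannot undergo a Reidemeister move of type $\reid_1$ at $r_0$.

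By Lemma~\ref{lem:critproj}, the hypothesis has two consequences. First, $\mathcal{C}$ is transverse to $S_{r_0}$, so there is an open interval $I\ni r_0$ on which $L_r$ is a smoothly varying family of $\Cc$-links and $\pi_r|_{L_r}$ is a smoothly varying family of maps into $\Rr^2$. Second, the equation $x\frac{\partial f}{\partial x}+(r_0+y)\frac{\partial f}{\partial y}=0$ has no solution on $L_{r_0}$; since $L_{r_0}$ is a real $1$-manifold, the non-vanishing of the derivative of $\pi_{r_0}|_{L_{r_0}}$ is exactly the condition for this map to be an immersion. By continuity and compactness of $L_{r_0}$, after possibly shrinking $I$ we may assume that $\pi_r|_{L_r}$ is an immersion for every $r\in I$.

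On $I$ the diagrams $D_r$ therefore form a smooth family of oriented immersed curves in $\Rr^2$. The rotation number $\rot(D_r)$, being the total signed turning of the unit tangent, is invariant under regular homotopies (i.e.\ smooth homotopies through immersions); this is the Whitney--Graustein theorem applied componentwise. Hence $\rot(D_r)$ is constant on $I$. On the other hand, each of the eight variants of the first Reidemeister move depicted in Figure~\ref{fig:R1moves} changes the rotation number by $\pm 1$, since the loop created or destroyed contributes $\pm 1$ to the total turning. Thus no $\reid_1$ move can take place on $I$, contradicting the assumption that one occurs at $r_0$.

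The main technical point is to bridge the analytic object (a critical point of $\pi_{r_0}$ on $L_{r_0}$ in the sense of Lemma~\ref{lem:critproj}) with the topological event (an $\reid_1$ move in the family $D_r$). Rather than writing down a local normal form at the cusp and checking explicitly that the tangent to $L_{r_0}$ lies in the kernel of $d\pi_{r_0}$, I use the rotation number as an intermediary: immersion data is preserved in smooth families, $\rot$ is invariant under regular homotopy, and $\reid_1$ is the unique one among the Reidemeister moves that alters $\rot$. Once this chain is set up the proof reduces to the two already verified observations above.
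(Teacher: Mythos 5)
Your proof is correct, and it takes a genuinely different route from the paper's. You argue globally: by Lemma~\ref{lem:critproj} the absence of critical points of $\pi_{r_0}$ on $L_{r_0}$ gives both transversality and the immersion property, compactness extends the immersion property to all $r$ in a neighbourhood of $r_0$, so the diagrams $D_r$ move by a regular homotopy and $\rot(D_r)$ is locally constant (the easy half of Whitney--Graustein), whereas every variant of $\reid_1$ shifts $\rot$ by $\pm 1$ (consistent with Table~\ref{tab:changes}). The paper instead argues locally at the shrinking loop: it marks the point $Q_1(r)$ with horizontal tangent and the point $Q_2(r)$ with vertical tangent on the loop, observes that both converge to the same point $Q_0$ as $r\to r_0$, and derives a contradiction because, under the no-critical-point assumption, the tangent line of $D_{r_0}$ at $Q_0$ would have to be simultaneously a limit of horizontal and of vertical lines. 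Your argument is shorter and more conceptual; what the paper's pointwise argument buys is locality, i.e.\ immunity to whatever else the diagram is doing at the same radius. Your global $\rot$ count would not detect two simultaneous $\reid_1$ moves of opposite rotational sign occurring at exactly the same $r_0$ --- a degenerate coincidence excluded by the genericity assumptions of Section~\ref{sec:changes}, but one the paper's local analysis rules out for free. It is worth stating explicitly that you are using the convention that exactly one local move occurs at $r_0$ (or invoking genericity) to close that loophole; otherwise the argument is complete.
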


\begin{proof}
We prove the proposition for the $\reid_1^{\downarrow}$ move, for the $\reid_1^{\uparrow}$
the proof is essentially the same.
The local situation for the move $\reid_1^{\downarrow}$ is the following:
at $r_0-\epsilon$ the link diagram $D_r$ has a loop; at $r_0+\epsilon$ 
the link diagram is locally a smooth arc.
At each $r \in [r_0-\epsilon,r_0)$ we define two points on each loop:
\begin{itemize}
  \item $Q_1(r)$, such that the tangent at this point is horizontal;
  \item $Q_2(r)$, such that the tangent at this point is vertical.  
\end{itemize}

  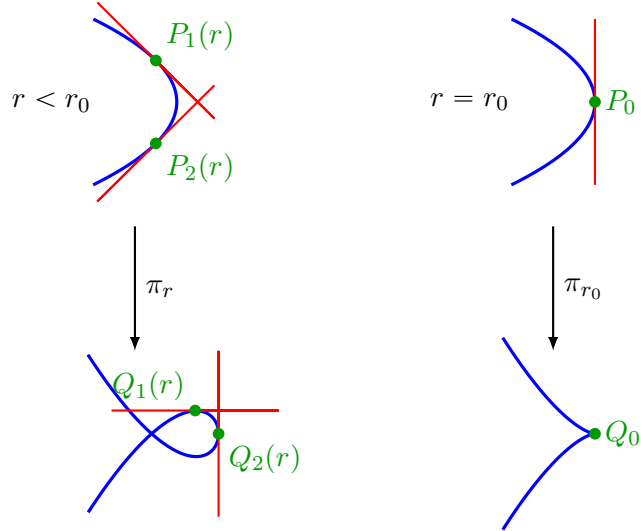
\begin{figure}[h]
    \begin{tikzpicture}[scale=1.1]

\begin{scope}[xshift=0cm]

    \node at (-1.5,0) {$r<r_0$};
  \def\myc{0.8}
   \draw [very thick, color=blue,samples=100,smooth, domain=-1:1]  plot ({-\x*\x}, {\x} );

  \coordinate (P1) at (-0.25,0.5);
  \draw[thick,red] (P1)--+(0.7,-0.7)--+(-0.7,0.7) ;
  \fill[green!60!black] (P1) circle (2pt) node[above right] {$P_1(r)$};

  \coordinate (P2) at (-0.25,-0.5);
  \draw[thick,red] (P2)--+(-0.7,-0.7)--+(0.7,0.7) ;
  \fill[green!60!black] (P2) circle (2pt) node[below right] {$P_2(r)$};

\draw[->,>=latex,thick] (-0.5,-1.5)--(-0.5,-3) node[midway,right]{$\pi_r$};

\begin{scope}[yshift=-4cm]
   \draw [very thick, color=blue,samples=100,smooth, domain=-1.25:1.25]  plot ({0.5 - \x*\x}, {\myc*\x-\x*\x*\x} );

  \coordinate (Q1) at (0.22,0.28);
  \draw[thick,red] (Q1)--+(1,0)--+(-1,0) ;
  \fill[green!60!black] (Q1) circle (2pt) node[above left] {$Q_1(r)$};

  \coordinate (Q2) at (0.5,0);
  \draw[thick,red] (Q2)--+(0,1)--+(0,-1) ;
  \fill[green!60!black] (Q2) circle (2pt) node[below right] {$Q_2(r)$};

\end{scope}

\end{scope}

\begin{scope}[xshift=5cm]

    \node at (-1.5,0) {$r=r_0$};
  \def\myc{0}
   \draw [very thick, color=blue,samples=100,smooth, domain=-1:1]  plot ({\myc - \x*\x}, {\x} );
  \draw[thick,red] (0,-1)--(0,1) ;
  \fill[green!60!black] (0,0) circle (2pt) node[right] {$P_0$};

\draw[->,>=latex,thick] (-0.5,-1.5)--(-0.5,-3) node[midway,right]{$\pi_{r_0}$};

\begin{scope}[yshift=-4cm]
   \draw [very thick, color=blue,samples=100,smooth, domain=-1.05:1.05]  plot ({\myc*\myc - \x*\x}, {\myc*\x-\x*\x*\x} );

  \fill[green!60!black] (0,0) circle (2pt) node[right] {$Q_0$};
\end{scope}

\end{scope}

\end{tikzpicture}
    \caption{The horizontal/vertical tangents and their projection.}
  \end{figure}
  
The fact that at $r_0$ the loop disappears, implies that
$Q_1(r) \to Q_0$ and $Q_2(r) \to Q_0$, as $r\to r_0$.
Let $P_1(r)$ and $P_2(r)$ be points of the link $L_r$ such that
$\pi_r(P_i(r)) = Q_i(r)$. And let $P_0$ such that
$\pi_r(P_0) = Q_0$.
The tangent at $P_0$ is the limit of the tangents at $P_1(r)$ as
$r \to r_0$. This tangent at $P_0$ is also the limit of the tangents
at $P_2(r)$ as $r \to r_0$.

Assume by contradiction, that $\pi_{r_0}$ has no critical points;
we may also suppose that $\pi_{r}$ has no critical points
for $r\in [r_0-\epsilon,r_0)$.
The tangent at $P_1(r)$ is sent to the tangent at $Q_1(r)$,
by definition of $Q_1(r)$ this last tangent is a horizontal line.
The tangent at $P_2(r)$ is sent to a vertical line.
The tangent at $Q_0$ exists and is the image of the tangent
at $P_0$ because $\pi_{r_0}$ is assumed to have no critical points.
It implies that the tangent at $Q_0$ is both the limit of horizontal lines
(the tangents at $Q_1(r)$) and vertical lines (the tangents at $Q_2(r)$).
This yields a contradiction. 
\end{proof}


\section{Euler characteristic of $\Cc$-links}\label{sec:euler}

To a diagram $D$ of a link $L$ we associate two numbers:
\begin{itemize}
  \item the \defi{writhe}, $\writhe(D)$, which is the sum of positive 
crossings~$\boxplus$, minus the negative crossings~$\boxminus$ (see Figure~\ref{fig:writhe}); this number does not
change, if the orientation of $L$ is reversed, but it can change if one reverses the orientation of some, but not all, components of
$L$;
  
  \item the \defi{winding number} $\rot(D)$ 
  (or the \defi{rotation number}) is the degree of the Gauss map from the diagram $D$ to $S^1$
  defined by the unit tangent vectors. Informally, $\rot(D)$ 
  counts the number of turns 
  made when going along the diagram. If $D'$ is equal to $D$ with reversed orientation, then $\rot(D')=-\rot(D)$.
\end{itemize}

  \begin{figure}[h]
    \def\myepsilon{0.2}
\newcommand{\pluscrossing}{
\draw[<-,>=latex,black, thick] (1,1)--(-1,-1);
\draw[<-,>=latex,black, thick] (-1,1)--(-\myepsilon,\myepsilon);
\draw[black, thick] (\myepsilon,-\myepsilon)--(1,-1);
}
\newcommand{\minuscrossing}{
\draw[<-,>=latex,black, thick] (-1,1)--(1,-1);
\draw[thick, black] (-1,-1)--(-\myepsilon,-\myepsilon);
\draw[->,>=latex,black, thick] (\myepsilon,\myepsilon)--(1,1);
}

\begin{tikzpicture}[scale=0.8]

\pluscrossing;

\node at (0,-1) {$\boxplus$};
\begin{scope}[xshift=4cm]
\minuscrossing;
\node at (0,-1) {$\boxminus$};
\end{scope}

\end{tikzpicture}
    \caption{The positive/negative crossing. \label{fig:writhe}}
  \end{figure}
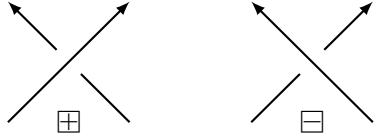
We return to $\Cc$-links. Define
\begin{equation}\label{eq:whatisLr}
\chi(L_r) = \chi(\mathcal{C}_r),
\end{equation}
where $\mathcal{C}_r=\mathcal{C}\cap B_r$.
\begin{remark}
If $L_r$ is strongly quasipositive, then $\mathcal{C}_r$ minimizes the genus among all smooth surfaces in $B_r$
with boundary $L_r$ (this is a result of Kronheimer and Mrowka, see \cite{BF} for an exposition in the language of $\Cc$-links). It follows that $\chi(L_r)=2-2g_4(L_r)-\# L_r$, where $\# L_r$ is the number
of components of $L_r$ and $g_4(L_r)$ is the smooth four-genus, therefore $\chi(L_r)$ depends only on $L_r$ and not on $\mathcal{C}_r$.
If $L_r$ is quasipositive but not strongly quasipositive, then the genus of $\mathcal{C}_r$ might be actually bigger than the four-genus of $L_r$.
\end{remark}

The following result proves Theorem~\ref{main4} of the introduction.
\begin{theorem}
\label{th:charlink}
$$\chi(L_r)=\rot(D_r)-\writhe(D_r)$$  
\end{theorem}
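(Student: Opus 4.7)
The plan is to prove the equality by checking that both sides evolve identically as a function of the radius $r$. Since we have assumed $0 \notin \mathcal{C}$, for $r>0$ small enough $L_r = \emptyset$ and $\mathcal{C}_r = \emptyset$, so both sides are zero. The strategy is thus to show that the combination $F(r) = \chi(\mathcal{C}_r) - \rot(D_r) + \writhe(D_r)$ is locally constant in $r$, hence identically zero.

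The function $F$ can only jump at a finite collection of critical radii of two kinds. The first kind consists of points where $\mathcal{C}$ is not transverse to $S_r$; by Proposition~\ref{prop:localindex} these are either $0$-handle attachments (birth of a component of $\mathcal{C}_r$) or $1$-handle attachments (saddle points). The second kind consists of critical values of $\pi_r|_{L_r}$ outside the non-transversality locus, which, by Theorem~\ref{th:reid1}, correspond to first Reidemeister moves $\reid_1^{\uparrow\boxplus\oplus}$ or $\reid_1^{\downarrow\boxminus\ominus}$; additionally, at isolated radii generic $\reid_2$ and $\reid_3$ moves can occur when two strands in the projection cross.

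At a $\reid_2$ or $\reid_3$ move, both $\rot(D_r)$ and $\writhe(D_r)$ are classically invariant, and $\chi(\mathcal{C}_r)$ does not change since $\mathcal{C}$ remains transverse to $S_r$. At $\reid_1^{\uparrow\boxplus\oplus}$, a positive loop with positive crossing is created, increasing both $\rot$ and $\writhe$ by $1$; at $\reid_1^{\downarrow\boxminus\ominus}$, a negative loop with negative crossing is destroyed, so both $\rot$ and $\writhe$ increase by $1$ as well. In both cases $\rot - \writhe$ is unchanged, and $\chi(\mathcal{C}_r)$ is unchanged, so $F$ does not jump. At a $0$-handle attachment, $\chi(\mathcal{C}_r)$ jumps by $+1$, and by Lemma~\ref{noominus} only $\Oo^\oplus$ occurs, which inserts a positive round circle into $D_r$ disjoint from the rest, contributing $+1$ to $\rot(D_r)$ and $0$ to $\writhe(D_r)$; hence the jump of $F$ cancels. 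At a $1$-handle attachment, $\chi(\mathcal{C}_r)$ jumps by $-1$, so it remains to show that the corresponding $\Ii$ move always decreases $\rot(D_r) - \writhe(D_r)$ by exactly one.

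The main obstacle is this last $\Ii$-case. It requires ruling out $\Ii^\ominus$ (the forthcoming Corollary~\ref{cor:ominus}, presumably an application of Proposition~\ref{prop:stokes} to a circuit passing through the newly created band as in Corollary~\ref{nonegativecircles}) and then a local-coordinate analysis at the saddle point: choose holomorphic coordinates diagonalizing the complex Hessian of $|x|^2+|y|^2$ restricted to $\mathcal{C}$, describe the two local arcs of $L_r$ before and the single arc after the critical radius in the complex stereographic picture, and compute that the change of the contribution to $\rot$, minus the change of the contribution to $\writhe$ coming from all crossings swept during the handle attachment, equals $-1$. Once this is verified, $F$ is constant, and by the base case $F \equiv 0$, proving the theorem.
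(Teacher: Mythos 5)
Your reduction to checking the jumps of $F(r)=\chi(\mathcal{C}_r)-\rot(D_r)+\writhe(D_r)$ at critical radii is the same skeleton the paper uses, and your treatment of $\reid_2$, $\reid_3$, the two allowed $\reid_1$ moves, and $\Oo^{\oplus}$ is correct. But the proof is not complete: everything hinges on the $1$-handle case, and there you only name what would need to be shown. The exclusion of $\Ii^{\ominus}$ is the genuinely hard point, and your suggested mechanism does not work as stated. The reason $\Oo^{\ominus}$ is excluded (Lemma~\ref{noominus}) is that it creates an isolated round negative circle, i.e.\ a negative circuit with \emph{no} crossings and hence no negative jumps, contradicting Corollary~\ref{cor:negative}. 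A $1$-handle, by contrast, is attached to pre-existing strands; the relevant circuits pass through crossings of the ambient diagram and can perfectly well carry negative jumps, so Proposition~\ref{prop:stokes} gives no contradiction. Moreover, in the paper Corollary~\ref{cor:ominus} is \emph{deduced from} Theorem~\ref{th:charlink}, not used to prove it, so invoking it here would be circular.

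The paper sidesteps this entirely with a global monotonicity argument that your proposal misses. One checks (Table~\ref{tab:changes}) that every move which can occur -- including the not-yet-excluded $\Ii^{\ominus}$ -- either preserves $F$ or strictly decreases it; so $F$ is non-increasing in $r$. Then one pins down the value at the other end: replacing $f$ by $f+\eta(x^{\delta}+y^{\delta})$ for small $\eta$ and large $\delta$ does not change $F(r_0)$ for a fixed $r_0$, but makes the link at infinity the torus link $T(\delta,\delta)$ with its standard diagram, for which $\chi=\rot-\writhe$ is verified directly, so $F=0$ for large $r$. A non-increasing function vanishing at both ends is identically zero, which proves the theorem and only then yields the impossibility of $\Ii^{\ominus}$ as a corollary. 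If you want to keep your local strategy you would instead have to carry out the local-coordinate computation at the saddle showing that the attached band always decreases $\rot-\writhe$ by exactly one (equivalently, that only $\Ii^{\oplus}$ occurs), and you have not supplied that argument.
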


\begin{proof}
Let us define
\[\Theta(r)=\chi(L_r)-\big(\rot(D_r)-\writhe(D_r)\big).\]
Our goal is to show that $\Theta\equiv 0$.

\textbf{Step 1. }\emph{Proof under an extra assumption.}

Start with $r$ close to zero. If $\mathcal{C}$ is generic, we may assume it does not
pass through the origin, so $D_r$ is empty. It has $\rot=\writhe=\chi=0$, so $\Theta=0$. 

\smallskip
The extra assumption is that $\Theta(r)=0$ for $r$ sufficiently large.

\smallskip
\begin{table}
\renewcommand{\arraystretch}{1.5}
\begin{tabular}[t]{ccccc} 
   \qquad \qquad    & \quad $\chi$ \quad & \quad $\rot$ \quad  & \quad $\writhe$ 
\quad  & \quad $\chi - (\rot - \writhe)$ \quad\\ \hline\hline
   $\Oo^{\oplus}$           & $+1$ & $+1$ & $0$ & $0$    \\ \hline
   $\Oo^{\ominus}$           & $+1$ & $-1$ & $0$ & $+2$    \\ \hline
   $\Ii^{\oplus}$ & $-1$ & $-1$ & $0$ & $0$    \\ \hline    
   $\Ii^{\ominus}$  & $-1$ & $+1$ & $0$ & $-2$   \\ \hline    
   $\reid_1^{\uparrow\boxplus\oplus}$ or $\reid_1^{\downarrow\boxminus\ominus}$    & $0$  & $+1$  & $+1$ & $0$    \\ 
\hline  
   $\reid_1^{\uparrow\boxminus\oplus}$ or $\reid_1^{\downarrow\boxplus\ominus}$    & $0$  & $+1$  & $-1$ & $-2$    \\ 
\hline 
   $\reid_1^{\uparrow\boxplus\ominus}$ or $\reid_1^{\downarrow\boxminus\oplus}$    & $0$  & $-1$  & $+1$ & $+2$    \\ 
\hline 
   $\reid_1^{\uparrow\boxminus\ominus}$ or $\reid_1^{\downarrow\boxplus\oplus}$   & $0$  & $-1$  & $-1$ & $0$    \\ 
\hline    
   $\reid_2$       & $0$  & $0$  & $0$ & $0$    \\ \hline    
   $\reid_3$       & $0$  & $0$  & $0$ & $0$    \\     
\end{tabular}
\caption{The changes of various quantities associated to $D_r$ under Reidemeister moves and handle attachments.
  The meaning of the table is the following:
  if for example $D_{r+\varepsilon}$ is the link obtain from $D_{r-\varepsilon}$ after move
  $\Ii^{\oplus}$, then
  $\rot(D_{r+\varepsilon})=\rot(D_{r-\varepsilon})-1$.
  }\label{tab:changes}
\end{table}

In Table~\ref{tab:changes} we gathered the changes of various quantities upon performing a given move. 
Verifying the values in the table is routine and follows directly from the definition of the moves.

From the assumption it follows by Table~\ref{tab:changes}, Lemma~\ref{noominus}
and Theorem~\ref{th:reid1} that no allowed move can actually increase $\Theta$.
Therefore, if $\Theta=0$ for small $r$ and for $r$ close to infinity, then we infer
that $\Theta\equiv 0$.

\smallskip
\textbf{Step 2.} \emph{Perturbing $\mathcal{C}$ at infinity.}

Fix $r_0>0$. Suppose that $d=\deg f$. Take $\wt{f}(x,y)=f(x,y)+\eta(x^\delta+y^\delta)$,
where $\delta>d$ and $\eta\neq 0$ is a very small complex number. Denote by $\wt{\mathcal{C}}$ the zero set of $\wt{f}$
and let $\wt{\Theta}$ be the $\Theta$ function for $\wt{\mathcal{C}}$.

If $\eta$ is sufficiently small, the link $\wt{L}_{r_0}=\wt{\mathcal{C}}\cap S_{r_0}$ is isotopic to $L_{r_0}$ and 
the corresponding diagrams $D_{r_0}$ and $\wt{D}_{r_0}$ are isotopic. Moreover, the intersections $\mathcal{C}\cap B_{r_0}$
and $\wt{\mathcal{C}}\cap B_{r_0}$ are isotopic (it is crucial that $r_0$ be fixed before $\eta$). This shows that $\wt{\Theta}(r_0)=\Theta(r_0)$.
  
The link at infinity of $\wt{\mathcal{C}}$ is the torus link $T(\delta,\delta)$ and its diagram is the standard diagram of
$T(\delta,\delta)$: it has writhe $\delta(\delta-1)$, the rotation number $\delta$ and the Euler characteristic is $\rot-\writhe$.
This shows that $\wt{\Theta}(r)$ is zero for $r$ close to infinity.

\smallskip
\textbf{Step 3.} \emph{The conclusion.}

By Step 1 we obtain that $\wt{\Theta}(r)\equiv 0$. In Step 2 we showed 
that $\wt{\Theta}(r_0)=\Theta(r_0)$. This implies that $\Theta(r_0)=0$.
As $r_0$ was arbitrary, we infer that $\Theta(r)\equiv 0$.
\end{proof}

The following corollary concludes the proof of Theorem~\ref{main3} from the introduction.
\begin{corollary} \label{cor:ominus}
Only the attachment $\Ii^{\oplus}$ can occur (and not $\Ii^{\ominus}$).
\end{corollary}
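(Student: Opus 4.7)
The proof will be a short application of Theorem~\ref{th:charlink} combined with Table~\ref{tab:changes}. The idea is that Theorem~\ref{th:charlink} pins down the quantity
\[
\Theta(r) = \chi(L_r) - \rot(D_r) + \writhe(D_r)
\]
to be identically zero along the whole family of radii, and an $\Ii^{\ominus}$ move would produce a jump in $\Theta$ that this identity forbids.

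More precisely, I would argue as follows. Suppose toward a contradiction that an $\Ii^{\ominus}$ attachment occurred at some critical radius $r_\ast$. For $\epsilon>0$ sufficiently small the passage from $L_{r_\ast-\epsilon}$ to $L_{r_\ast+\epsilon}$ is governed locally by this handle attachment alone, and from the $\Ii^{\ominus}$ row of Table~\ref{tab:changes} we read
\[
\chi(L_{r_\ast+\epsilon}) - \chi(L_{r_\ast-\epsilon}) = -1, \qquad
\rot(D_{r_\ast+\epsilon}) - \rot(D_{r_\ast-\epsilon}) = +1, \qquad
\writhe(D_{r_\ast+\epsilon}) = \writhe(D_{r_\ast-\epsilon}).
\]
Hence $\Theta(r_\ast+\epsilon) - \Theta(r_\ast-\epsilon) = -2$. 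But Theorem~\ref{th:charlink} asserts that $\Theta(r)\equiv 0$, so both sides must vanish, a contradiction. Therefore no $\Ii^{\ominus}$ attachment can occur; combined with Proposition~\ref{prop:localindex} this means every handle attached to $\mathcal{C}\cap B_r$ as $r$ increases is either an $\Oo^{\oplus}$ or an $\Ii^{\oplus}$.

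The only point to be careful about is avoiding circular reasoning: the proof of Theorem~\ref{th:charlink} invoked Lemma~\ref{noominus} (ruling out $\Oo^{\ominus}$) but did \emph{not} rule out $\Ii^{\ominus}$, only noted that $\Ii^{\ominus}$ changes $\Theta$ by $-2$, which was compatible with the ``$\Theta$ does not increase'' monotonicity used there. So once Theorem~\ref{th:charlink} is in hand, the present corollary is an immediate consequence, and there is no hidden obstacle. The proof is therefore just one line referring to Table~\ref{tab:changes} and Theorem~\ref{th:charlink}.
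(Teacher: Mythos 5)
Your argument is exactly the paper's: Theorem~\ref{th:charlink} gives $\Theta\equiv 0$, while Table~\ref{tab:changes} shows an $\Ii^{\ominus}$ move would drop $\Theta$ by $2$, a contradiction. Your remark on avoiding circularity is also correct -- the proof of Theorem~\ref{th:charlink} only used that no allowed move increases $\Theta$, so nothing is assumed about $\Ii^{\ominus}$ in advance.
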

\begin{proof}
The move $\Ii^{\ominus}$ decreases the value of $\Theta$. As $\Theta\equiv 0$,
the move $\Ii^{\ominus}$ cannot occur.
\end{proof}


\subsection{The self-linking of $L_r$ via the complex projection}\label{sec:bennequinsl}

Theorem~\ref{th:charlink} bears a strong resemblance to results of Bennequin and Laufer stated in Theorem~\ref{th:bennequin} below; see 
\cite[Proposition 4]{Ben} and \cite{Lau}.
In this section we shall show that there is no direct translation between Theorem~\ref{th:charlink} and the theorem of Bennequin--Laufer.
Moreover, combining the two results we can obtain subtle and non-trivial obstructions
for a link in $\Rr^3$ to be an image under the complex stereographic projection of a $\Cc$--link.
 
Let us recall results of \cite{Ben}.
For a point $z=(x_1,x_2,y_1,y_2)\in\Cc^2$ consider the vector $jz=(-y_1,y_2,x_1,-x_2)\in T_zS^3$.
That is, if we write $z=(x,y)$ as usual, we have $j(x,y)=(-\ol{y},\ol{x})$.
\begin{definition}\label{def:transandsl}\
\begin{itemize}
\item A link $L\subset S^3$ is transverse if it is transverse to the standard contact structure in $S^3$, that is for each $x\in L$, $T_xL\cap\ker\alpha=0$, where
$\alpha$ is a 1-form defining the standard contact structure.
\item Suppose $L\subset S^3$ is a transverse link. 
Define $jL$ to be the link $L$ pushed slightly along the vector field $jz$. The \defi{self-linking} number
$sl(L)$ is the linking number of $L$ and $jL$.
\end{itemize}
\end{definition}
It was shown in \cite{Ben} that $sl(L)$ is does not change if a link changes by a transverse isotopy (a transverse isotopy is an isotopy through transverse links). 
Moreover the following result holds.

\begin{theorem}[\expandafter{see \cite[Proposition 4]{Ben}}]\label{th:bennequin}
The self-linking number of a $\Cc$-link $L$ equals $-\chi(L)$.
\end{theorem}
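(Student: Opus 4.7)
The plan is to prove Theorem~\ref{th:bennequin} directly by a Chern class computation on the holomorphic Seifert surface $F = \mathcal{C} \cap B_r$; Theorem~\ref{th:charlink} cannot be invoked, as the paper will immediately warn.

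First I would interpret $sl(L) = \lk(L, jL)$ as a self-intersection number. The vector field $V(z) = jz = (-\ol{y}, \ol{x})$ satisfies $\langle V, z\rangle_\Cc = 0$, so $V$ lies in the contact distribution $H$; in particular $V$ is tangent to $S_r$, and by the positive transversality of $L$ (Corollary~\ref{cor:istransverse}) it is transverse to $TL$ along $L$. Hence $\lk(L, jL)$ equals the signed count of intersections of $jL$ with any Seifert surface of $L$. Taking this surface to be $F$ itself and extending $V$ to a nearby perturbation $F'$ of $F$ inside $\Cc^2$, this count equals the relative self-intersection $F \cdot F'$, which is the relative first Chern number $c_1(N_{F/\Cc^2})[F, \partial F]_V$ of the normal bundle with boundary framing given by $V$.

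Second, I would exploit that $T\Cc^2$ is trivial. Along $F$ the splitting $T\Cc^2|_F = TF \oplus N_{F/\Cc^2}$ yields $c_1(TF) + c_1(N_{F/\Cc^2}) = 0$ in $H^2(F;\Zz)$, so
$$sl(L) \;=\; c_1(N_{F/\Cc^2})[F, \partial F]_V \;=\; -c_1(TF)[F, \partial F]_V.$$
A relative Poincar\'e--Hopf argument then gives $c_1(TF)[F, \partial F]_V = \chi(F)$, provided the $V$-induced trivialization of $TF|_L$ is homotopic, through nonvanishing sections, to the inward tangent framing of $L$ in $F$. Combining the two identities and using $\chi(F)=\chi(L)$ produces $sl(L) = -\chi(L)$.

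The main obstacle is the boundary-framing bookkeeping. Decomposing $V = V^T + V^N$ under $T\Cc^2|_L = TF|_L \oplus N_{F/\Cc^2}|_L$, one must verify that $V^T$ and $V^N$ are both nonvanishing on $L$, and that $V^T$ is homotopic through nonvanishing sections of $TF|_L$ to the inward tangent framing of $\partial F \subset F$. Both nonvanishing statements reduce to the $\Cc$-linear independence of $z$ and $\grad f$ (algebraic transversality of $\mathcal{C}$ and $S_r$), while the homotopy claim is a short local computation in complex coordinates which is also what pins down the sign $-\chi(L)$ versus $+\chi(L)$. This is the only delicate step; the characteristic class and self-intersection calculations are formal.
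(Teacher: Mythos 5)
The paper does not prove Theorem~\ref{th:bennequin}: it is imported verbatim from Bennequin \cite[Proposition 4]{Ben}, so your proposal has to stand on its own, and its overall strategy (read $\lk(L,jL)$ as a relative Euler number of $N_{F/\Cc^2}$ and play it off against a relative Poincar\'e--Hopf count for $TF$) is indeed a classical and viable route. However, the step you call ``formal'' is precisely where the content of the theorem lives, and as written it has a genuine gap. Since $F$ has nonempty boundary, $H^2(F;\Zz)=0$, so ``$c_1(TF)+c_1(N_{F/\Cc^2})=0$ in $H^2(F;\Zz)$'' is vacuous and gives no relation between the \emph{relative} Chern numbers. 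The correct identity is $c_1(TF,s_T)+c_1(N,s_N)=c_1\bigl(\det T\Cc^2|_F,\;s_T\wedge s_N\bigr)$, and the right-hand side equals the winding number of $s_T\wedge s_N\colon L\to\Lambda^2_\Cc\Cc^2\setminus\{0\}\cong\Cc^*$ relative to the constant framing $\partial_x\wedge\partial_y$; this integer is not zero for free, and nothing in your outline computes it. It does vanish for the right boundary sections: taking $s_T=\tau$, the positive tangent to $L$, one has $\tau\wedge jz=\left\langle \tau\,|\,z\right\rangle_\Cc$, which is purely imaginary (as $\tau\in T_zS_r$) and of constant sign (this is exactly $\alpha(\tau)>0$, i.e.\ positive transversality), hence defines a null-homotopic map $L\to\Cc^*$; moreover $\tau\wedge V=\tau\wedge V^N$ since $\tau$ and $V^T$ lie in the same complex line $TF|_L$. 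So positive transversality must be invoked here, and not merely to see that $V$ is nowhere tangent to $L$.

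There is a second, related error: the claim that nonvanishing of $V^T$ ``reduces to the $\Cc$-linear independence of $z$ and $\grad f$'' is false. With $N=\Cc\cdot\grad f$ the Hermitian normal, $V^T=0$ iff $jz\in\Cc\,\grad f$ iff $\left\langle z\,|\,\grad f\right\rangle_\Cc=x\frac{\partial f}{\partial x}+y\frac{\partial f}{\partial y}=0$, i.e.\ iff the affine complex tangent line of $\mathcal{C}$ at $z$ passes through the origin. This condition is independent of transversality (which reads $\frac{\partial f}{\partial x}\ol y-\frac{\partial f}{\partial y}\ol x\neq0$) and occurs at isolated points of $\mathcal{C}$ that can perfectly well lie on $L_r$; so the framing $V^T$ may degenerate and the relative Poincar\'e--Hopf step as stated can fail. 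Both problems are repaired simultaneously by discarding $V^T$ and using the tangent framing $\tau$ throughout: $e(TF,\tau)=\chi(F)$ is standard, $V^N\neq0$ does follow from transversality, and the degree-zero computation above then closes the chain $sl(L)=e(N,V^N)=-e(TF,\tau)=-\chi(F)$.
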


By computing the image of the vector field under the map $\Psi_r$ we can try to  explicitly calculate self-linking numbers from
the diagram of complex stereographic projection. We have:
$$\dd\Psi_r(x,y)(j(x,y))
= \left(
-r\frac{r+\bar y}{(r+y)^2},\frac{-r}{|r+y|^2}\big(\Im(x)+\Im(\frac{y-\bar y}{r+y}\bar x)\big)
\right)$$

\begin{example}\label{ex:onL}
Consider the trivial knot $L_1= \{y=0\} \cap S_r$.
Its image under the complex stereographic
projection is $L_1' = \Psi_r(L_1) = \{ (e^{\ii \theta}, 0) \} \subset \Cc \times \Rr$.
For a point $(x,y)= (e^{\ii \theta}, 0)$ we get 
$\dd\Psi_r(x,y)(j(x,y)) = (-1,-\frac{\sin \theta}{r})$.
By shifting $L_1'$ along the vector field induced by $j$ we get a trivial knot $jL_1'$, which verifies
$\lk(L_1',jL_1') = -1$. Of course $L_1'$ bounds a disk and also verifies $\chi(L_1')=1$.
\end{example}

Bennequin's characterisation of self-linking numbers enables us to find explicit links that are not the image by $\Psi_r$ of any $\Cc$-link.
\begin{example}\label{ex:onLprim}
Let $0<\epsilon \ll 1$ and let $L_2' = \{ (\epsilon e^{\ii \theta} -\ii, 0) \}$ be a trivial knot of $\Cc \times \Rr$.
We will prove that $L_2'$ cannot be the image by $\Psi_r$ of any $\Cc$-link.
Suppose conversely that $L_2' = \psi_r(L_2)$ for some $\Cc$-link $L_2 \subset S_r$.
Since $L_2' \subset \Cc \times \{0\}$, then for $(x,y) \in L_2$ we get $y=0$, moreover as $L_2'$ is a small circle around
$(\ii,0) \in \Cc \times \{0\}$ then $x \sim -\ii r$.
It implies that $\dd\Psi_r(x,y)(j(x,y)) \sim (-1,+1) \in \Cc \times \Rr$. 
If $jL_2'$ is the push of $L_2'$ along $\dd\Psi_r(x,y)(j(x,y))$ then $jL_2'$ is not linked with $L_2'$ so that 
$\lk(L_2',jL_2') = 0$. 

If $L_2'$ were the projection of a $\Cc$-link, the Euler characteristic  $\chi(L_2')$ should be an odd number (using
the fact that $L_2'$ is strongly quasipositive and fibered, we could even calculate that $\chi(L_2')=1$). This contradicts Theorem~\ref{th:bennequin}.
\end{example}

Notice that the links $L_1$ from Example~\ref{ex:onL} and $L_2$ from Example~\ref{ex:onLprim} are transversely isotopic and
obviously 
$\writhe(L_1')=\writhe(L_2')$ and $\rot(L_1')=\rot(L_2')$. 
In particular Theorem~\ref{th:charlink} is not a consequence of Theorem~\ref{th:bennequin}.

\section{Diagrams of $\Cc$-links}\label{sec:diagramsandbraids}

\subsection{Yamada--Vogel algorithm}
\label{sec:vogelalgorithm}

The algorithm of Yamada--Vogel, see \cite{Yam,Vog}, can be effectively used to transform a 
link diagram into a braid. Let us quickly recall the algorithm. We will follow mainly \cite{KaTe}, 
another excellent source is \cite{BiBr}.

To begin with, notice that any two disjoint oriented circles in $S^2$ bound an annulus. We say that two oriented circles are \defi{compatible}
if they induce the same element in the homology group of the annulus.

Let $D\subset\Rr^2$ be a link diagram. Consider the diagram $D_{sm}$ obtained by smoothing each crossing of $D$ in an oriented way.
The diagram $D_{sm}$ is precisely the diagram used in the Seifert algorithm. It consists of a finite number of pairwise disjoint circles,
which are usually referred to as the \defi{Seifert circles}.
We denote by $n_+$ the number of positively (counter-clockwise) oriented circles and by $n_-$ the number of negatively (clockwise) oriented circles.
We also denote by $h(D)$ the number of pairs of incompatible Seifert circles. The Yamada--Vogel algorithm consists of two steps:
\begin{itemize}
\item[(1)] Apply $\reid_2$ and $\reid_3$ moves to $D$ in order to make $h(D)=0$. The resulting diagram $D_{sm}$ is a union of $n_+$
concentric positively oriented circles and $n_-$ concentric negatively oriented circles.
\item[(2)] Apply isotopy of the new diagram  in $S^2$, or, equivalently a sequence of Reidemeister moves (involving precisely $n_-$ moves 
$\reid_1^{\downarrow\boxminus\ominus}$ and $\reid_1^{\uparrow\boxminus\oplus}$) to obtain a diagram $D$ such that $D_{sm}$ is a union of concentric circles. It is clear that $D$ is then in a braid form.
\end{itemize}
If $n_-=0$, the second step is not necessary.

\subsection{Positive case}

We can now prove Theorem~\ref{main1} from the introduction.
\begin{theorem}
Suppose $L_r$ is a strongly quasipositive fibered link and its diagram $D_r$ has no negative Seifert circles (that is $n_-=0$). Then, after applying Yamada--Vogel's algorithm to $D_r$ we obtain a quasipositive braid.
\end{theorem}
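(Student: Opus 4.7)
The plan is to apply the Yamada--Vogel algorithm to $D_r$, obtain a braid $b_r$, and verify that $b_r$ satisfies the hypothesis of the Etnyre--van Horn-Morris criterion \cite{Et}, which for a strongly quasipositive fibered link forces a braid representative to be quasipositive as soon as $n(b_r)-a(b_r)=\chi(\text{fiber})$.

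First, I would trace through the effect of the Yamada--Vogel algorithm on the relevant numerical invariants. The algorithm consists of $\reid_2$ and $\reid_3$ moves in Step (1), followed (in general) by $\reid_1$ moves in Step (2). Since $n_-=0$, Step (2) is not needed, and the $\reid_2,\reid_3$ moves preserve both $\writhe$ and the number of Seifert circles. Consequently, the resulting braid $b_r$ has $n(b_r)=n_+$ strands and algebraic length $a(b_r)=\writhe(b_r)=\writhe(D_r)$.

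Next I would compute $\rot(D_r)$ in terms of the Seifert circles. Oriented smoothing at every crossing does not change the rotation number, so $\rot(D_r)=\rot(D_{sm})=n_+-n_-=n_+$ under our hypothesis. Combining this with Theorem~\ref{th:charlink} yields
\[
n(b_r)-a(b_r)=n_+-\writhe(D_r)=\rot(D_r)-\writhe(D_r)=\chi(L_r).
\]
Because $L_r$ is strongly quasipositive and fibered, $\mathcal{C}\cap B_r$ is a minimal genus Seifert surface and is isotopic (rel boundary) to the fibration fiber, so $\chi(L_r)=\chi(\mathcal{C}\cap B_r)$ equals the Euler characteristic of the fiber.

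At this point the Etnyre--van Horn-Morris theorem applies directly to $b_r$ and concludes that $b_r$ is a quasipositive braid. The main potential obstacle is verifying the clean identification $\rot(D_r)=n_+-n_-$; this uses only the local fact that Seifert smoothing preserves the tangent direction at a crossing, but deserves to be stated carefully since it is the step that links the geometric quantity $\rot(D_r)$ from Theorem~\ref{th:charlink} to the combinatorial invariant $n(b_r)$ of the output braid. Once this identification is in place, the rest of the proof is a short assembly of the ingredients above.
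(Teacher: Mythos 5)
Your proposal is correct and follows essentially the same route as the paper: with $n_-=0$ only $\reid_2,\reid_3$ moves are needed, these preserve $\rot$ and $\writhe$, the output braid has $n(b_r)=n_+=\rot(D_r)$ and $a(b_r)=\writhe(D_r)$, and Theorem~\ref{th:charlink} together with the Etnyre--van Horn-Morris criterion finishes the argument. The only place where you are more terse than the paper is the identification $\chi(\mathcal{C}\cap B_r)=\chi(F)$: this rests on the local Thom conjecture (Kronheimer--Mrowka) to get $\chi(\mathcal{C}\cap B_r)=\chi_4(L_r)$, plus $\chi_3(L_r)=\chi_4(L_r)$ for strongly quasipositive links and $\chi(F)=\chi_3(L_r)$ for fibered links, rather than on an isotopy of $\mathcal{C}\cap B_r$ to the fiber.
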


\begin{proof}
Let $F$ be the fiber of the fibered link $L_r$. 
First, notice that as $\mathcal{C}$ is a smooth complex curve then by Kronheimer--Mrowka's theorem 
$\chi(\mathcal{C}_r) = \chi_4(L_r)$ (see \cite{BF}).
Second, as the link $L_r$ is fibered, $\chi(F) = \chi_3(L_r)$. Third, as the link $L_r$ is strongly quasipositive 
then $\chi_3(L_r) = \chi_4(L_r)$ (see \cite[Theorem 86]{Ru2005}).
By Theorem~\ref{th:charlink} we infer that 
$\chi(F)=\rot(D_r)-\writhe(D_r)$.

Apply the first step of Yamada--Vogel's algorithm and let $D'$ be the diagram obtained in that way. The Reidemeister moves involved in Step~1
of the algorithm are only $\reid_2$ and $\reid_3$, which preserve both the writhe and the winding number. In particular 
$\chi(F)=\rot(D')-\writhe(D')$.

By assumption $n_-=0$. In particular the second step is not necessary and $D'$ is already in braid form. The braid index of $D'$
is equal to $n_+=\rot(D')$. The algebraic length of $D'$ is equal to the writhe $\writhe(D')$.
We invoke now the following result of Etnyre and Van Horn-Morris. 

\begin{theorem}[see \expandafter{\cite[Theorem 5.4]{Et}}]
\label{th:et}
Let $L$ be a strongly quasipositive fibered link and let $B$ any braid representing it. Denote by $a(B)$ its algebraic length, $n(B)$
its braid index and $\chi(L)$ the Euler characteristic of a minimal genus Seifert surface.
Then $\chi(L)=n(B)-a(B)$ if and only if $B$ is a quasipositive braid.
\end{theorem}

In the context of this theorem, our equality $\chi(F)=\rot(D_r)-\writhe(D_r)$, is written $\chi(L)=n(B)-a(B)$.
Application of the `only if' part of Theorem \ref{th:et} concludes the proof of Theorem~\ref{main1}.
\end{proof}

\subsection{Removing negative circles}
The assumption that $n_-=0$ is quite restrictive. If it is not satisfied, we need to perform Step 2 of Yamada--Vogel's algorithm. In this
step we apply both the 
$\reid_1^{\downarrow\boxminus\ominus}$ and $\reid_1^{\uparrow\boxminus\oplus}$
move $n_-$ times. The first
of these moves does not affect the difference $\rot(D)-\writhe(D)$. However, the second one increases $\rot$ by $1$ (we create
a positive circle) and decreases the writhe by $1$. We conclude, that after Step 2, the resulting diagram $D'$ has
\[\chi(F)=\rot(D')-\writhe(D')-2n_-.\]
It represents a braid, but by the `if' part of Theorem~\ref{th:et},  $D'$ does 
not represent a quasipositive braid. In order to obtain a quasipositive braid, one needs to apply
a sequence of Markov moves. This is doable in each separate case, because of Markov's theorem, see \cite[Section 2.5]{KaTe}, 
but we do not have an explicit algorithm for that. In fact $D'$ is a braid representing a quasipositive link, that is to say there exists
a quasipositive braid $D''$ representing this link; Markov's theorem says that we can transform $D'$ to $D''$ by a sequence of Markov moves.

In some simple cases we can eliminate  negative circles by an explicit algorithm. By saying \defi{eliminate} we mean apply a sequence of
$\reid_2$, $\reid_3$, $\reid_1^{\boxplus\oplus}$ and $\reid_1^{\boxminus\ominus}$ moves on a diagram, so that the resulting
diagram has $n_-$ decreased by $1$. When all the negative circles are eliminated, we apply the Yamada--Vogel algorithm
and then Theorem~\ref{main1}. 

Before we investigate the possibility of eliminating negative Seifert circles, we need to introduce some terminology.
Suppose that $D\subset\Rr^2$ is a link diagram and let $D_{sm}$ be the smoothed diagram as in Section~\ref{sec:vogelalgorithm}.
\begin{definition}
The \defi{graph} $\Gamma_D$ of $D$ is a graph whose vertices are the Seifert circles of $D_{sm}$ and the edges correspond to crossings of $D$.
A crossing
of $D$ adjacent to two Seifert circles $C_1$ and $C_2$ 
corresponds to an edge connecting vertices $C_1$ and $C_2$.
Usually we will not distinguish between a Seifert circle and the vertex in $\Gamma_D$ corresponding to it.
Each vertex $C$ of $\Gamma_D$ has a sign, $\epsilon_C$, according to whether the Seifert circle is positively or negatively oriented.
Each edge $e$ of $\Gamma_D$ has also a sign. An edge corresponding to a positive crossing of $D$ has sign $\epsilon_e=+1$, an edge corresponding
to a negative crossing has sign $\epsilon_e=-1$.

The \defi{valency} of a Seifert circle $C$ is the valency of $C$ in the graph $\Gamma_D$, that is, the number
of incident edges. Two Seifert circles are \defi{adjacent}
if there is at least one edge connecting one to the other.
\end{definition}

\begin{remark}
There might be multiple edges connecting two vertices of the graph.
\end{remark}

The graph $\Gamma_D$ is \defi{bipartite}, that is, any closed path has an even number of edges. This is a consequence of the sign assignment to vertices of $\Gamma_D$. Notice that if two vertices of the same sign are connected by an edge, then the corresponding Seifert circles are nested; if the two vertices of the opposite sign are connected by an edge, then the corresponding Seifert circles
cannot be nested.

We begin with a simple result on removing a negative circle, later we pass to more complicated cases.

\begin{proposition}
\label{prop:neg1}
Suppose there is a negative circle with valency $1$ adjacent to a positive circle. 
Then it can be eliminated (in the sense described above). Moreover, if $D$ has $k$ negative circles with valency $1$ and each of these circles
is adjacent to a positive circle, then all of these circles can be removed.
\end{proposition}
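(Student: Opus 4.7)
The plan is to realize the valency-one negative Seifert circle $C$ as a kink in $D$ that can be removed by a single allowed Reidemeister move $\reid_1^{\downarrow\boxminus\ominus}$. The only genuinely nontrivial ingredient is to verify that the unique crossing $X$ on $C$ has sign $\boxminus$; this is forced by the positive transversality of the $\Cc$-link (through Corollary~\ref{cor:negative}) together with the opposite-sign non-nesting rule recalled just before the proposition.

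I would first observe that since $C$ has valency one and shares its only adjacent crossing $X$ with $C'$, the curve $C$ is, up to planar isotopy in $D$, a small simple closed loop attached to the rest of the diagram at $X$. Being a negative (clockwise) simple circuit, Corollary~\ref{cor:negative} applied to $\gamma = C$ forces the single jump of $C$ at $X$ to be negative; equivalently, traversing $C$ through $X$ we descend in the $Z$-coordinate and hence pass from the over-strand of $X$ onto the under-strand.

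The main step is then to translate this negative-jump information into a constraint on the sign of $X$. Writing the tangents of the two strands at $X$ as $\vec v_1,\vec v_2$, the Seifert smoothing at $X$ replaces the crossing by two disjoint arcs, each belonging to one of $C,C'$; the negative-jump condition forces the arc on $C$ to be the one joining the over-strand's incoming end to the under-strand's outgoing end. A short angular analysis at $X$, tracking on which side of each smoothed arc the crossing point lies, then shows that there is a unique over/under choice compatible with $C$ being negative, $C'$ being positive, and $C,C'$ being non-nested: namely, the one in which $\vec v_{\mathrm{over}}$ is the $\pi/2$ clockwise rotation of $\vec v_{\mathrm{under}}$, which is exactly the condition defining a $\boxminus$ crossing. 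The opposite over/under choice is ruled out because it would place $X$ in the interior of both $C$ and $C'$ and hence force them to be nested, contradicting the non-nesting of opposite-sign adjacent Seifert circles recalled just before the proposition.

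Knowing that the kink at $X$ has type $\boxminus\ominus$, the single move $\reid_1^{\downarrow\boxminus\ominus}$ --- one of the moves allowed in the definition of ``eliminate'' --- removes $C$ and decreases $n_-$ by one. The ``moreover'' part then follows by iteration: each reduction is supported in a small neighbourhood of the corresponding kink, so it does not affect the valency or the orientation of any other Seifert circle of $D$, and the procedure may be repeated $k$ times. The principal obstacle is the angular analysis in the third paragraph: it is essentially pictorial but must be carried out carefully to keep track of orientations, of which smoothed bigon lies on $C$, and of the relation between the crossing sign and the rotation from the over-strand to the under-strand.
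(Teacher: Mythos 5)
Your overall strategy is the one the paper uses: apply Corollary~\ref{cor:negative} to the Seifert circuit of $C$ to force a negative jump at its unique crossing $X$, use the non-nesting of adjacent opposite-sign Seifert circles to pin down the planar configuration, conclude that $X$ is a $\boxminus$ crossing, and remove the kink by $\reid_1^{\downarrow\boxminus\ominus}$. However, your ``main step'' is wrong as written, in two respects. First, the sign convention is backwards: in Figure~\ref{fig:writhe} the $\boxplus$ crossing is the one whose over-tangent is the \emph{clockwise} $\pi/2$ rotation of the under-tangent (equivalently $\det[v_{\mathrm{over}},v_{\mathrm{under}}]>0$). The configuration you must land on --- a negatively oriented \emph{outward} loop whose first passage through $X$ is the under-strand --- has the over-tangent equal to the \emph{counterclockwise} rotation of the under-tangent, and that is what makes it $\boxminus$. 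Second, and more substantively, the deduction is misassembled: the over/under data at $X$ is not constrained by nesting at all (flipping the crossing changes neither the Seifert circles, nor their orientations, nor which contains which), and $X$ can never lie in the interior of \emph{both} adjacent Seifert circles --- they are disjoint and $X$ sits in the region between them, nesting being equivalent to $X$ lying inside exactly one. The correct bookkeeping is: the negative jump (your second paragraph) already fixes which passage is over; what non-nesting fixes is the planar type of the kink, outward rather than inward, i.e.\ the sign of $\det[v_1,v_2]$ for the two passages in the order of traversal; the crossing sign is then $-\sgn\det[v_1,v_2]$, and \emph{both} inputs are needed, since a negative loop with negative jump but inward configuration would carry a $\boxplus$ crossing. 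This is exactly why Figure~\ref{fig:singlecrossing} distinguishes the two cases.

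A smaller gap: ``up to planar isotopy $C$ is a small simple closed loop'' is not quite true, since other Seifert circles may lie inside the loop; planar isotopy cannot extract them, and one needs $\reid_2$ and $\reid_3$ moves (allowed in the definition of elimination) to push them out before the $\reid_1^{\downarrow}$ move applies. The paper handles this in one sentence. With these corrections your argument coincides with the paper's proof, including the iteration for the ``moreover'' part.
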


\begin{proof}
By Corollary~\ref{cor:negative} a negative circle $C$ must have a crossing with negative jump. 
If the circle is adjacent to a positive circle, the crossing must be `outward' (relatively to $C$) as opposed to `inward'; see Figure~\ref{fig:singlecrossing}.
In theory, $C$ might contain other Seifert circles inside it, but
we can move them out of $C$ by using $\reid_2$ and $\reid_3$ moves. Then $C$ has a single negative crossing, so it can be removed
by a $\reid_1^{\downarrow\boxminus\ominus}$ move.
\end{proof}

\begin{figure}

\def\myepsilon{0.2}
\newcommand{\pluscrossing}{
\draw[thick, black] (-1,1)--(1,-1);
\draw[thick, black] (-1,-1)--(-\myepsilon,-\myepsilon);
\draw[thick, black] (\myepsilon,\myepsilon)--(1,1);
}
\newcommand{\minuscrossing}{
\draw[thick, black] (1,1)--(-1,-1);
\draw[thick, black] (-1,1)--(-\myepsilon,\myepsilon);
\draw[thick, black] (\myepsilon,-\myepsilon)--(1,-1);
}

\newcommand{\knotbot}{
(-1,1)
.. controls +(-2,2) and +(0,2) .. (-4,-2)
.. controls +(0,-2) and +(-1,0) .. (0,-6)
.. controls +(1,0) and +(0,-2) .. (4,-2)
.. controls +(0,2) and +(2,2) .. (1,1)
}

\newcommand{\knotbotbis}{
(-1,-1)
.. controls +(-2,-2) and +(-0.5,1) .. (-1.5,-5)
.. controls +(0.5,-1) and +(-0.5,0) .. (0,-6)
.. controls +(0.5,0) and +(-0.5,-1) .. (1.5,-5)
.. controls +(0.5,1) and +(2,-2) .. (1,-1)
}

\begin{tikzpicture}[scale=0.5]

\begin{scope}[xshift=0]
\draw[thick] \knotbotbis;
\minuscrossing;

\draw[<-,>=latex, very thick] (2.2,-3.4)--+(0,0.3) node[right]{$\ominus$};
\node at (0,-1) {$\boxminus$};
\end{scope}

\begin{scope}[xshift=10cm]
\draw[thick] \knotbot;
\minuscrossing;

\draw[<-,>=latex, very thick] (4,-2)--+(0,0.3) node[right]{$\ominus$};
\node at (0,-1) {$\boxminus$};
\end{scope}

\end{tikzpicture}
\caption{Two negative circles of valency one with negative crossing (type $\ominus/\boxminus$). 
To the left: an `outward' crossing. To the right: an `inward' crossing.}
\label{fig:singlecrossing}
\end{figure}
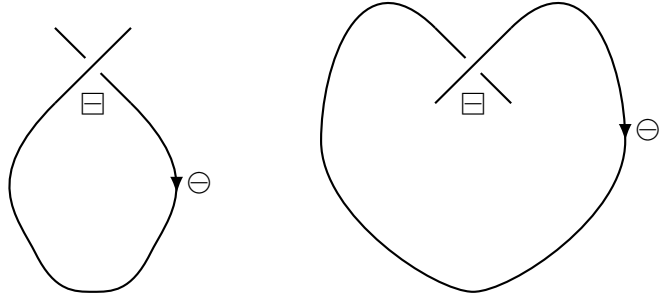

\begin{remark}
We point out that the proof of Proposition~\ref{prop:neg1} involves the use of contact transversality of the link in the sense of Section~\ref{sec:contact2}.
\end{remark}

In the next case we discuss a situation, when a circle has valency two.

\begin{proposition}
\label{prop:valencytwo}
Suppose $D$ is a diagram of a complex projection of some $\Cc$--link. Assume that $D$ has a negative Seifert circle $C$ with
valency $2$, which is adjacent to two different circles, which are both positively oriented. Then one can eliminate the Seifert circle $C$.
\end{proposition}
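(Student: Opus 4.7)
The aim is to reduce the valency-two situation to the valency-one case of Proposition~\ref{prop:neg1}, which handles the elimination.

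As a first step, apply Proposition~\ref{prop:stokes} to $C$ regarded as a simple negative Seifert circuit. The single bounded region of $\Rr^2\setminus C$ has index $-1$, so $\mathcal{A}(C)=-\area(C)<0$ and
\[
\lambda_1+\lambda_2<2\mathcal{A}(C)<0,
\]
where $\lambda_i$ denotes the jump at the crossing $p_i$ of $C$ with $C_i$. Hence at least one of the two jumps is strictly negative; after relabeling we may assume $\lambda_1<0$. Using $\reid_2$ and $\reid_3$ moves we clear the interior of $C$ of all other parts of $D$, so that $C$ bounds an empty disk in the plane.

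Next, carry out at $p_1$ the same local analysis as in the proof of Proposition~\ref{prop:neg1}: the negative jump says that the Seifert arc of $C$ at $p_1$ has its incoming end on the over-strand of $D$ and its outgoing end on the under-strand. Since $C$ (negative) and $C_1$ (positive) are not nested, the $C_1$-strand at $p_1$ lies outside $C$, so the local picture at $p_1$ is the ``outward'' one of Figure~\ref{fig:singlecrossing}, and in particular $p_1$ has sign $\boxminus$.

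The main geometric step is to reduce the valency of $C$ from two to one. Since the interior of $C$ is empty, one may slide the $C_1$-strand through $C$ by a sequence of $\reid_2$ and $\reid_3$ moves, across one of the two arcs of $C$ joining $p_1$ to $p_2$, past the crossing $p_2$ and the $C_2$-strand. A short case analysis on the sign of $\lambda_2$ and on the sign of the crossing $p_2$ shows that the slide can be arranged so that, after cancelling appropriate $\reid_2$ pairs, the resulting diagram has $C$ of valency one, still negatively oriented and still adjacent to a positive Seifert circle. Proposition~\ref{prop:neg1} then removes $C$ by a $\reid_1^{\downarrow\boxminus\ominus}$ move.

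The main obstacle is the case analysis in the last paragraph, and in particular the subcase $\lambda_2>0$: there, the jumps at $p_1$ and $p_2$ have opposite signs, so the slide must be preceded by an additional $\reid_3$ move to reconcile the opposite over/under behaviours at $p_1$ and $p_2$ before the $\reid_2$ cancellation becomes possible. One also has to verify that the auxiliary moves do not introduce any new negative Seifert circles, which follows from the fact that they only involve strands of the positively oriented circles $C_1$ and $C_2$ and the (now empty) interior of $C$.
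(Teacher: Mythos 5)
The opening steps of your plan (using Proposition~\ref{prop:stokes}/Corollary~\ref{cor:negative} to find a crossing of $C$ with negative jump, observing that non-nestedness forces that crossing to be of type $\boxminus$, and clearing the interior of $C$) agree with the paper. The gap is in your ``main geometric step'': the claim that a sequence of $\reid_2$ and $\reid_3$ moves alone can turn $C$ into a negative Seifert circle of valency one, to which Proposition~\ref{prop:neg1} would then apply. A Seifert circle of valency one is necessarily a kink, i.e.\ an embedded loop of a \emph{single} strand crossing itself once; but in the configuration at hand $C$ is the bigon bounded by two \emph{distinct} arcs $s_1,s_2$ (each running $C_1\to p_1\to C\to p_2\to C_2$ and back), none of which self-crosses locally, and $\reid_2$, $\reid_3$ preserve the parity of the number of self-crossings of every arc. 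So the valency-one intermediate state you aim for is not reachable by $\reid_2/\reid_3$ moves. The case split that actually matters is the sign of the crossing $p_2$ (not of the jump $\lambda_2$): if $p_2$ is $\boxplus$, the two crossings form a cancellable bigon and a single $\reid_2$ eliminates $C$ outright, with no valency-one stage; if $p_2$ is also $\boxminus$, the arcs $s_1,s_2$ form a clasp (two same-sign crossings with non-alternating endpoints), which cannot be reduced by $\reid_2/\reid_3$ at all.

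This clasp case is exactly where the paper needs a genuinely different idea (Figure~\ref{fig:newsituation}): one must first perform the \emph{creation} move $\reid_1^{\uparrow\boxplus\oplus}$ -- one of the two moves allowed for $\Cc$-link diagrams -- to supply an extra positive crossing, and only then can $\reid_3$ and $\reid_2$ cancel one of the clasp crossings and make the negative circle disappear (merging $C_1$ and $C_2$ and creating a small new positive circle). Your plan defers the only $\reid_1$ move to the very end as a destruction move $\reid_1^{\downarrow\boxminus\ominus}$, and no amount of case analysis on $\lambda_2$ will make the preceding $\reid_2/\reid_3$ slide produce the kink that this final move needs. If you want to pursue a reroute-the-strand strategy, the correct framework is the Murasugi--Przytycki move of Section~\ref{sec:murasugiprzytycki} (Lemma~\ref{lem:MPproperties}), which indeed consists of $\reid_2/\reid_3$ moves plus a single allowed $\reid_1$ move and merges $C$ into $C_1$; but that move is not the na\"{\i}ve slide you describe, and its $\reid_1$ move does not act on a valency-one incarnation of $C$.
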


\begin{proof}
If a negative circle is adjacent to two positive circles, none of these two circles can be nested inside $C$. Given that $C$ is  negatively
oriented, it must have at least one negative crossing (Corollary \ref{cor:negative}). This leaves us with two possibilities for the position of Seifert circles. We sketch them in Figure~\ref{fig:possibilities}.

The first case is solved by a single $\reid_2$ move. 
In the second case we use the trick explained in Figure~\ref{fig:newsituation}.
We perform a $\reid_1^{\uparrow\boxplus\oplus}$ move followed by a series of $\reid_2$ and $\reid_3$
moves. After theses moves the central negative Seifert circle has disappeared.
A new positive Seifert circle is created.
If $C$ was adjacent to two different positively oriented Seifert circles,
then these two positive circles will form a single positively oriented Seifert circle 
after the moves.
\end{proof}

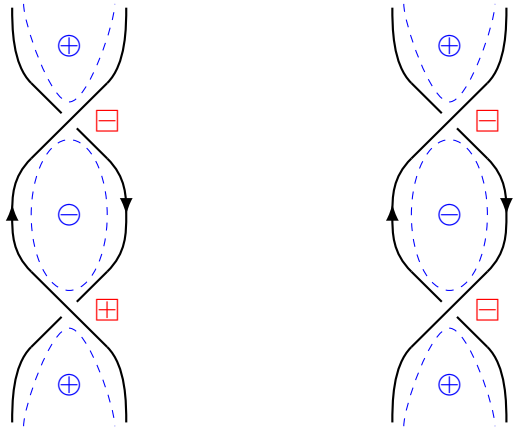
\begin{figure}

\def\myepsilon{0.2}
\newcommand{\pluscrossing}{
\draw[thick, black] (-1,1)--(1,-1);
\draw[thick, black] (-1,-1)--(-\myepsilon,-\myepsilon);
\draw[thick, black] (\myepsilon,\myepsilon)--(1,1);
}
\newcommand{\minuscrossing}{
\draw[thick, black] (1,1)--(-1,-1);
\draw[thick, black] (-1,1)--(-\myepsilon,\myepsilon);
\draw[thick, black] (\myepsilon,-\myepsilon)--(1,-1);
}

\newcommand{\knotmiddle}{
\draw[thick]
(-1,-1)
.. controls +(-0.5,-0.5) and +(0,0.5) .. (-1.5,-2.5)
.. controls +(0,-0.5) and +(-0.5,0.5) .. (-1,-4);
\draw[thick]
(1,-1)
.. controls +(0.5,-0.5) and +(0,0.5) .. (1.5,-2.5)
.. controls +(0,-0.5) and +(0.5,0.5) .. (1,-4);
\draw[dashed,blue] (0,-2.5) ellipse (1 and 2);
\node[blue] at (0,-2.5) {$\ominus$};
}

\newcommand{\knotabove}{
\draw[thick] (-1,1) .. controls +(-0.5,0.5) and +(0,-0.5) .. (-1.5,3);
\draw[thick] (1,1) .. controls +(0.5,0.5) and +(0,-0.5) .. (1.5,3);
\draw[dashed,blue] (-1.2,3.1)
.. controls +(0,-0.5) and +(-0.5,0) .. (0,0.5)
.. controls +(0.5,0) and +(0,-0.5) .. (1.2,3.1);
\node[blue] at (0,2) {$\oplus$};
}

\newcommand{\knotbelow}{
\draw[thick] (-1,-6) .. controls +(-0.5,-0.5) and +(0,0.5) .. (-1.5,-8);
\draw[thick] (1,-6) .. controls +(0.5,-0.5) and +(0,0.5) .. (1.5,-8);
\draw[dashed,blue] (-1.2,-8.1)
.. controls +(0,0.5) and +(-0.5,0) .. (0,-5.5)
.. controls +(0.5,0) and +(0,0.5) .. (1.2,-8.1);
\node[blue] at (0,-7) {$\oplus$};
}

\begin{tikzpicture}[scale=0.5]

\begin{scope}[xshift=0]
  \minuscrossing;
  \begin{scope}[yshift=-5cm]
     \pluscrossing;
  \end{scope}
  \knotmiddle;
  \knotabove;
  \knotbelow;

\draw[<-,>=latex, very thick] (1.5,-2.5)--+(0,0.3);
\draw[->,>=latex, very thick] (-1.5,-2.5)--+(0,0.3);
 \node[red] at (1,0) {$\boxminus$};
 \node[red] at (1,-5) {$\boxplus$};
\end{scope}

\begin{scope}[xshift=10cm]
  \minuscrossing;
  \begin{scope}[yshift=-5cm]
     \minuscrossing;
  \end{scope}
  \knotmiddle;
  \knotabove;
  \knotbelow;

\draw[<-,>=latex, very thick] (1.5,-2.5)--+(0,0.3);
\draw[->,>=latex, very thick] (-1.5,-2.5)--+(0,0.3);
 \node[red] at (1,0) {$\boxminus$};
 \node[red] at (1,-5) {$\boxminus$};
\end{scope}

\end{tikzpicture}
\caption{The two possibilities for the position of Seifert circles in Proposition~\ref{prop:valencytwo}.}\label{fig:possibilities}
\end{figure}

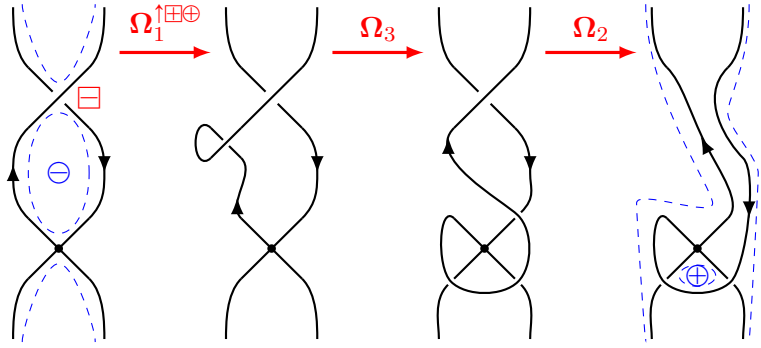
\begin{figure}

\def\myepsilon{0.2}
\newcommand{\pluscrossing}{
\draw[thick, black] (-1,1)--(1,-1);
\draw[thick, black] (-1,-1)--(-\myepsilon,-\myepsilon);
\draw[thick, black] (\myepsilon,\myepsilon)--(1,1);
}
\newcommand{\minuscrossing}{
\draw[thick, black] (1,1)--(-1,-1);
\draw[thick, black] (-1,1)--(-\myepsilon,\myepsilon);
\draw[thick, black] (\myepsilon,-\myepsilon)--(1,-1);
}
\newcommand{\singularcrossing}{
\draw[thick, black] (1,1)--(-1,-1);
\draw[thick, black] (-1,1)--(1,-1);
\fill (0,0) circle (4pt);
}

\newcommand{\knotmiddle}{
\draw[thick]
(-1,-1)
.. controls +(-0.5,-0.5) and +(0,0.5) .. (-1.5,-2.5)
.. controls +(0,-0.5) and +(-0.5,0.5) .. (-1,-4);
\draw[thick]
(1,-1)
.. controls +(0.5,-0.5) and +(0,0.5) .. (1.5,-2.5)
.. controls +(0,-0.5) and +(0.5,0.5) .. (1,-4);
\draw[dashed,blue] (0,-2.5) ellipse (1 and 2);
\node[blue] at (0,-2.5) {$\ominus$};
}

\newcommand{\knotabove}{
\draw[thick] (-1,1) .. controls +(-0.5,0.5) and +(0,-0.5) .. (-1.5,3);
\draw[thick] (1,1) .. controls +(0.5,0.5) and +(0,-0.5) .. (1.5,3);
}

\newcommand{\knotbelow}{
\draw[thick] (-1,-6) .. controls +(-0.5,-0.5) and +(0,0.5) .. (-1.5,-8);
\draw[thick] (1,-6) .. controls +(0.5,-0.5) and +(0,0.5) .. (1.5,-8);
}

\begin{tikzpicture}[scale=0.4]

\begin{scope}[xshift=0cm]
  \minuscrossing;
  \begin{scope}[yshift=-5cm]
     \singularcrossing;
  \end{scope}
  \knotmiddle;
  \knotabove;
  \knotbelow;

\draw[dashed,blue] (-1.2,3.1)
.. controls +(0,-0.5) and +(-0.5,0) .. (0,0.5)
.. controls +(0.5,0) and +(0,-0.5) .. (1.2,3.1);

\draw[dashed,blue] (-1.2,-8.1)
.. controls +(0,0.5) and +(-0.5,0) .. (0,-5.5)
.. controls +(0.5,0) and +(0,0.5) .. (1.2,-8.1);

\draw[<-,>=latex, very thick] (1.5,-2.5)--+(0,0.3);
\draw[->,>=latex, very thick] (-1.5,-2.5)--+(0,0.3);
 \node[red] at (1,0) {$\boxminus$};
\end{scope}

\begin{scope}[xshift=7cm]
  \minuscrossing;
  \begin{scope}[yshift=-5cm]
     \singularcrossing;
  \end{scope}
  \knotabove;
  \knotbelow;

\draw[thick] (1,-1) .. controls +(0.5,-0.5) and +(0,0.5) .. (1.5,-2.5)
.. controls +(0,-0.5) and +(0.5,0.5) .. (1,-4);

    \begin{scope}[xshift=-1.5cm,yshift=-1.5cm,scale=0.5]
     \minuscrossing;
  \end{scope}

\draw[thick] (-2,-1) .. controls +(-0.25,0.25) and +(0,0.5) .. (-2.5,-1.5)
.. controls +(0,-0.5) and +(-0.25,-0.25) .. (-2,-2);

\draw[thick] (-1,-2) .. controls +(0.5,-0.5) and +(-0.50,0.5) .. (-1,-4);

\draw[<-,>=latex, very thick] (1.5,-2.5)--+(0,0.3);
\draw[->,>=latex, very thick] (-1.13,-3.6)--+(0,0.3);
\end{scope}

\begin{scope}[xshift=14cm]
  \minuscrossing;
  \begin{scope}[yshift=-5cm]
     \singularcrossing;
  \end{scope}
  \knotabove;

\draw[thick] (-1,-1)
 .. controls +(-1,-1) and +(-0.5,0.5) .. (1.1,-3.9) 
.. controls +(0.5,-0.5) and +(0.5,0.5) .. (1.1,-6.1)
.. controls +(-0.5,-0.5) and +(0.5,-0.5) .. (-1.1,-6.1)
.. controls +(-0.5,0.5) and +(-0.5,0.5) .. (-1,-4);

\draw[thick]
(1,-1)
.. controls +(0.5,-0.5) and +(0,0.5) .. (1.5,-2.5)
.. controls +(0,-0.5) and +(0.5,0.5) .. (1.2,-3.8);

\draw[thick] (-1.2,-6.2) .. controls +(-0.5,-0.5) and +(0,0.5) .. (-1.5,-8);
\draw[thick] (1.2,-6.2) .. controls +(0.5,-0.5) and +(0,0.5) .. (1.5,-8);

\draw[<-,>=latex, very thick] (1.5,-2.5)--+(0,0.3);
\draw[->,>=latex, very thick] (-1.19,-1.6)--+(0,0.3);
\end{scope}

\begin{scope}[xshift=21cm]

  \begin{scope}[yshift=-5cm]
     \singularcrossing;
  \end{scope}
  \knotabove;

\draw[thick] (1,1)
 .. controls +(-1,-1) and +(-0.5,0.5) .. (1.5,-2) 
.. controls +(0.5,-0.5) and +(0.5,0.5) .. (1.1,-6.1)
.. controls +(-0.5,-0.5) and +(0.5,-0.5) .. (-1.1,-6.1)
.. controls +(-0.5,0.5) and +(-0.5,0.5) .. (-1,-4);

\draw[thick]
(-1,1)
.. controls +(0.5,-0.5) and +(-0.5,1) .. (0.25,-1.5)
.. controls +(0.5,-1) and +(0.5,0.5) .. (1,-4);

\draw[thick] (-1.2,-6.2) .. controls +(-0.5,-0.5) and +(0,0.5) .. (-1.5,-8);
\draw[thick] (1.2,-6.2) .. controls +(0.5,-0.5) and +(0,0.5) .. (1.5,-8);

\draw[<-,>=latex, very thick] (1.7,-4)--+(0,0.3);
\draw[->,>=latex, very thick] (0.26,-1.6)--+(-0.15,0.3);

\draw[dashed,blue] (0,-5.9) ellipse (0.6 and 0.35);
\node[blue] at (0,-5.9) {$\oplus$};

\draw[dashed,blue] (1.7,3.1)
.. controls +(0,-2) and +(0,0.5) .. (1,-0.5)
.. controls +(0,-0.5) and +(0,0.5) .. (2,-2.5)
.. controls +(0,-0.5) and +(0,-0.5) .. (1.7,-8);

\draw[dashed,blue] (-1.7,3.1)
.. controls +(0,-3) and +(0,0.5) .. (0.5,-3.5)
.. controls +(0,-0.5) and +(0,0.5) .. (-2,-3.5)
.. controls +(0,-0.5) and +(0,-0.5) .. (-1.7,-8);
\end{scope}

\draw[->,>=latex, very thick,red] (2,1.5)--(5,1.5) node[midway, above]{$\mathbf{\Omega}_1^{\uparrow\boxplus\oplus}$}; 
\draw[->,>=latex, very thick,red] (9,1.5)--(12,1.5) node[midway, above]{$\mathbf{\Omega}_3$};      
 \draw[->,>=latex, very thick,red] (16,1.5)--(19,1.5) node[midway, above]{$\mathbf{\Omega}_2$};     
\end{tikzpicture}
\caption{Trick to remove a negative Seifert circle of valency $2$ 
(the dot crossing can be replaced by any type $\boxplus$/$\boxminus$ of crossing).}\label{fig:newsituation}
\end{figure}

\begin{remark}
\label{rem:twonegative}
If we have two negative Seifert circles of valency $2$ and each is adjacent to two disjoint positive circles, then we cannot in general remove the
two negative Seifert circles at once. Indeed the two negative circles might be adjacent to the same two positive circles. The first application
of the algorithm in Proposition~\ref{prop:valencytwo} will change these two positive circles into a single positive circle. 
Thus, it will not be possible to apply the algorithm again to remove the other negative circle.
\end{remark}

\subsection{Index of the graph and negative circles}
\label{sec:murasugiprzytycki}

The problem described in Remark~\ref{rem:twonegative} can be approached using the so-called graph index. Indeed, the algorithm presented
in Proposition~\ref{prop:valencytwo} can be thought of as a simple variant of the Murasugi--Przytycki move from \cite{MP}; even though the
Murasugi--Przytycki move applied to the situation on the left of Figure~\ref{fig:newsituation} gives a different output, the
philosophy remains the same. Our approach
will be based on the recent paper of Traczyk, see \cite{Tr2}, however we will focus more on the type of the $\reid_1$ moves
used in the Murasugi--Przytycki move. Therefore, we will need yet another variant of an index of the graph.

The Murasugi--Przytycki move on a link diagram $D$ is the move depicted in Figure~\ref{fig:stoimenow}. 
The procedure is as follows. We regard the diagram $D$ as Seifert circles joined with bands (this is the way one looks at the diagram when one constructs
the Seifert surface from the diagram).
Take two Seifert circles $C_1$ and $C_2$ and suppose they are connected by a single crossing. Take the bottom strand (the tunnel)
of the crossing and replace it by an arc constructed as follows: start shortly before the crossing (on the side of $C_2$), make a U-turn and follow the
Seifert circle $C_1$. The arc goes along Seifert circles according to the following procedure: if a band is met, then the band either connects $C_1$
with another Seifert circle, or it connects some two other Seifert circles. In the first case, the arc goes along the band and then along the new Seifert circle.
In the second case, the arc goes underneath the band and still follows the same Seifert circle it was following before meeting the band. We refer
to \cite{St2,Tr2} and to \cite[Section 8]{MP} for more details.
We have the following observation.

\begin{figure}
\begin{tikzpicture}[scale=0.7]
\begin{scope}[every node/.style={sloped,allow upside down}]
\draw (0.5,-0.5) -- ++ (0,-1.1) -- ++(0.3,-0.3) --  ++(0,-1.4) ++ (-0.3,0.0) -- ++(0,1.4) -- ++ (0.1,0.1) ++ (0.1,0.1) -- ++(0.1,0.1) -- node{\midarrow} ++ (0,1.1);
\draw (-0.8,-0.5) -- ++ (0,-1.1) -- ++(0.3,-0.3) -- ++(0,-1.4) ++ (-0.3,0.0) -- ++(0,1.4) -- ++ (0.1,0.1) ++ (0.1,0.1) -- ++(0.1,0.1) -- ++ (0,1.1);
\draw (0.5,-3.9) -- ++ (0,-1.1) -- ++(0.3,-0.3) -- ++(0,-0.5) ++ (-0.3,0.0) -- ++(0,0.5) -- ++ (0.1,0.1) ++ (0.1,0.1) -- ++(0.1,0.1) -- ++ (0,1.1);
\draw (-0.8,-3.9) -- ++ (0,-1.1) -- ++(0.3,-0.3) -- ++(0,-0.5) ++ (-0.3,0.0) -- ++(0,0.5) -- ++ (0.1,0.1) ++ (0.1,0.1) -- ++(0.1,0.1) -- ++ (0,1.1);
\draw (0.8,4) -- ++ (0,-1.7) -- ++(0.3,-0.3) -- ++(0,-1.4) ++ (-0.3,0.0) -- ++(0,1.4) -- ++ (0.1,0.1) ++ (0.1,0.1) -- ++(0.1,0.1) -- ++ (0,1.7);
\draw (-1.1,4) -- ++ (0,-1.7) -- ++(0.3,-0.3) -- ++(0,-1.4) ++ (-0.3,0.0) -- ++(0,1.4) -- ++ (0.1,0.1) ++ (0.1,0.1) -- ++(0.1,0.1) -- ++ (0,1.7);
\draw (-0.15,6) -- ++ (0,-0.3) -- ++(0.3,-0.3) -- ++(0,-1.4) ++ (-0.3,0.0) -- ++(0,1.4) -- ++ (0.1,0.1) ++ (0.1,0.1) -- ++(0.1,0.1) -- ++ (0,0.3);
%
\draw (1.5,0.2) -- ++ (1.05,0.0) -- ++ (0.4,-0.4) -- ++ (1,0) ++ (0,0.4) -- ++(-0.5,0);
\draw[->] (2.55,0.2) -- ++ (-0.4,0.0);
\draw[color=red, densely dashed,<-] (3.45,0.2) -- ++ (-0.5,0) -- ++ (-0.15,-0.15)  ++ (-0.1,-0.1) -- ++(-0.15,-0.15) -- ++(-0.5,0);
\draw (-3.7,0.4) -- ++ (0.9,0.0) -- ++ (0.3,-0.3) -- ++ (1,0) ++ (0,0.3) -- ++(-1,0) 
-- ++(-0.1,-0.1) ++ (-0.1,-0.1) -- ++ (-0.1,-0.1) -- ++(-0.9,0.0);
\draw (-3.7,-0.25) -- ++ (0.9,0.0) -- ++ (0.3,-0.3) -- ++ (1,0) ++ (0,0.3) -- ++(-1,0) 
-- ++(-0.1,-0.1) ++ (-0.1,-0.1) -- ++ (-0.1,-0.1) -- ++(-0.9,0.0);
\draw (3.9,0.25) -- ++ (0.9,0.0) -- ++ (0.3,-0.3) -- ++ (0.5,0) ++ (0,0.3) -- ++(-0.5,0) 
-- ++(-0.1,-0.1) ++ (-0.1,-0.1) -- ++ (-0.1,-0.1) -- ++(-0.9,0.0);
\draw (0.15,0.95) -- ++(0,0.15) -- ++ (0.2,0.2) -- ++(0,0.2) ++ (-0.2,0) -- ++(0,-0.2) -- ++(0.08,-0.08)  ++(0.04,-0.04) -- ++(0.08,-0.08) -- ++ (0,-0.15);
\draw (-0.35,0.95) -- ++(0,0.15) -- ++ (0.2,0.2) -- ++(0,0.2) ++ (-0.2,0) -- ++(0,-0.2) -- ++(0.08,-0.08)  ++(0.04,-0.04) -- ++(0.08,-0.08) -- ++ (0,-0.15);
\draw (-0.10,1.8) -- ++(0,0.15) -- ++ (0.2,0.2) -- ++(0,0.2) ++ (-0.2,0) -- ++(0,-0.2) -- ++(0.08,-0.08)  ++(0.04,-0.04) -- ++(0.08,-0.08) -- ++ (0,-0.15);
\draw (0.15,2.5) -- ++(0,0.15) -- ++ (0.2,0.2) -- ++(0,0.2) ++ (-0.2,0) -- ++(0,-0.2) -- ++(0.08,-0.08)  ++(0.04,-0.04) -- ++(0.08,-0.08) -- ++ (0,-0.15);
\draw (-0.35,2.5) -- ++(0,0.15) -- ++ (0.2,0.2) -- ++(0,0.2) ++ (-0.2,0) -- ++(0,-0.2) -- ++(0.08,-0.08)  ++(0.04,-0.04) -- ++(0.08,-0.08) -- ++ (0,-0.15);
\draw (-0.35,0.95) -- ++(0,0.15) -- ++ (0.2,0.2) -- ++(0,0.2) ++ (-0.2,0) -- ++(0,-0.2) -- ++(0.08,-0.08)  ++(0.04,-0.04) -- ++(0.08,-0.08) -- ++ (0,-0.15);
\draw [very thick,green!30!black, dotted] plot [smooth] coordinates {(2.55,-0.2) (2.50,-0.5) (2.00,-3.5) (1.0,-4.5) (0.5,-4.7) (-0.5,-4.7) (-1.0,-4.5) (-2,-4) (-3,-3) (-5,-0.8) (-5,0.8) (-3,3.5) (-2,4.5) (-1,5) (-0.3,5.2) (0.3,5.2) (1,5) (2,4) (3.1,0.7) 
(2.95,0.2)};
\filldraw[fill=black!20!white, draw=blue!50!black] (0,0) ellipse (2 and 1) node {$C_1$};
\filldraw[fill=black!20!white, draw=blue!50!black] (-4,0) ellipse (0.5 and 0.75);
\filldraw[fill=black!20!white, draw=blue!50!black] (4,0) ellipse (0.5 and 0.75) node {$C_2$}; 
\filldraw[fill=black!20!white, draw=blue!50!black] (0,4) ellipse (1.5 and 1);
\filldraw[fill=black!20!white, draw=blue!50!black] (0,-3.5) ellipse (1.5 and 1);
\filldraw[fill=black!20!white, draw=blue!50!black] (0,1.6) ellipse (0.5 and 0.2);
\filldraw[fill=black!20!white, draw=blue!50!black] (0,2.4) ellipse (0.5 and 0.2);
\end{scope}
\end{tikzpicture}
\caption{A move of Murasugi--Przytycki. The short (red dashed) strand, is replaced by the long (green dotted) arc, that goes
below all the other crossings. Picture taken from \cite{St2}.}
\label{fig:stoimenow}
\end{figure}
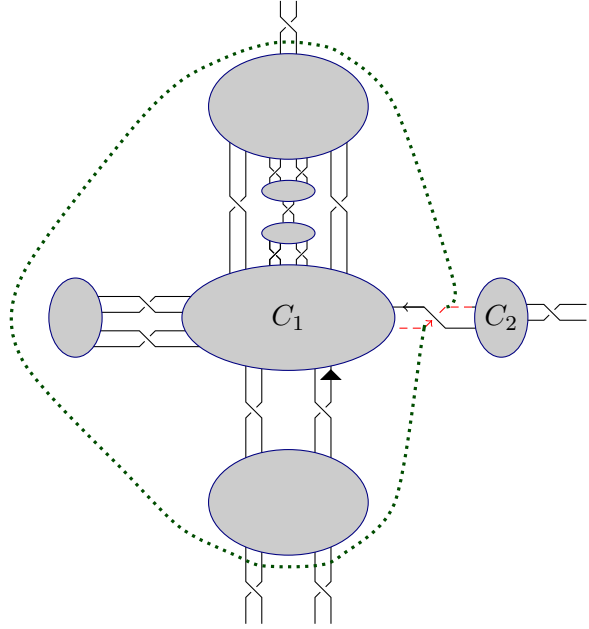

%
%
\begin{lemma}\ 
\label{lem:MPproperties}
\begin{itemize}
\item The effect of a Murasugi--Przytycki move is that the Seifert circles $C_1$ and $C_2$ are merged into one Seifert circle whose sign is the
same as the sign of $C_2$. All the other Seifert circles are preserved. The precise effect
on the graph $\Gamma_D$ is a variant of a graph contraction followed by a one point sum with another graph. We refer the reader to  \cite[Section 5]{St2}
for a precise description.
\item The Murasugi--Przytycki move is made using a sequence of $\reid_2$ and $\reid_3$ moves together with a single $\reid_1^{\downarrow}$ move.
The signs $\oplus/\ominus$, $\boxplus/\boxminus$ of the $\reid_1$ move depend on the orientation of the circle $C_1$ and the sign of the crossing. 
For instance, if $C_1$ is a negative Seifert circle connected to $C_2$ by a negative crossing, then the move is 
$\reid_1^{\downarrow\boxminus\ominus}$.
\end{itemize}
\end{lemma}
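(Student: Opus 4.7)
The plan is to unpack the Murasugi--Przytycki move into elementary pieces and track, on one hand, the change in the Seifert circles, and on the other hand, the types of Reidemeister moves used. The long arc that replaces the short bottom strand in Figure~\ref{fig:stoimenow} should be viewed as an explicit isotopy performed inside the diagram; each time the moving strand passes a band of the Seifert surface the cost is a $\reid_2$ or $\reid_3$ move, and the closing-up of the strand at the end produces the single $\reid_1$.

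For the first bullet, I would analyse the effect on the smoothed diagram $D_{sm}$. In the original diagram the crossing between $C_1$ and $C_2$ contributes a single band joining the two circles; removing the short strand deletes that band, so combinatorially $C_1$ and $C_2$ become connected along an entire arc. The long arc is threaded below every other band incident to $C_1$; at each such new under-crossing, the oriented smoothing is trivial, separating the two strands without affecting the Seifert circle on the far side of the band. What remains is a single closed curve made out of the long arc together with the portion of $C_1$ it parallels, inheriting the orientation of the strand that came out of $C_2$. Thus $C_1$ and $C_2$ merge into one Seifert circle carrying the sign of $C_2$, while every other Seifert circle is preserved. This is precisely the graph-theoretic contraction described in \cite[Section 5]{St2}, as the lemma asserts.

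For the second bullet, I would realise the same change as an explicit isotopy. Starting from the short strand at the original $C_1$--$C_2$ crossing, one drags it around $C_1$ by a sequence of local moves: sliding past a band that meets $C_1$ from a single other Seifert circle is a $\reid_2$ move, and sliding past a more complicated three-strand configuration is a $\reid_3$ move. Once the arc has travelled all the way around $C_1$, its two ends meet near the original crossing site and form a small kink; removing this kink is a $\reid_1^{\downarrow}$ move, accounting for the single $\reid_1$ in the statement. The labels of the kink are forced by the geometry: the loop was created by paralleling $C_1$ along the outside, so its orientation label $\oplus/\ominus$ coincides with the orientation of $C_1$, and the under/over pattern at the kink is inherited from the original $C_1$--$C_2$ crossing, so its crossing-sign label $\boxplus/\boxminus$ equals the sign of that crossing. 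This yields all four admissible combinations, of which the lemma records one example.

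The main obstacle will be the sign analysis for the $\reid_1$ move: one has to draw the four local pictures (orientation of $C_1$ times sign of the crossing) and verify in each that the resulting kink carries the asserted labels $\oplus/\ominus$ and $\boxplus/\boxminus$. The $\reid_2$/$\reid_3$ bookkeeping along the long arc is routine once a generic position is fixed, but some care is needed at bands that themselves carry several further bands, to be sure that only $\reid_2$ and $\reid_3$ moves are required and no spurious additional $\reid_1$ move appears along the way.
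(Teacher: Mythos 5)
Your proposal is correct and follows essentially the same route as the paper: the paper likewise decomposes the Murasugi--Przytycki move into a single $\reid_1$ move together with a sequence of $\reid_2$ and $\reid_3$ moves that carry the loop under the bands adjacent to $C_1$, and it likewise leaves the case-by-case sign bookkeeping to the reader, citing \cite{MP,St2} for the first bullet rather than arguing it directly on the smoothed diagram as you do. The only cosmetic difference is the ordering --- the paper performs the $\reid_1$ at the start (creating a small loop on the short strand and then sliding it around) while you slide first and cancel the kink last --- and your sign rule (loop orientation equal to that of $C_1$, crossing sign equal to that of the original crossing) is consistent with the paper's stated example $\reid_1^{\downarrow\boxminus\ominus}$.
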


\begin{proof}
The first part is proved in \cite{MP,St2}. 
The proof of the second part consists of presenting the move as adding a small loop (that is the $\reid_1$ move) on the dashed part
of Figure~\ref{fig:stoimenow} and then moving this loop under the circles adjacent to $C_1$ to obtain the dotted curve. The latter
operation is a sequence of $\reid_2$ and $\reid_3$ moves.
The details are left to the reader.
\end{proof}

We will now define an important notion of this subsection.
\begin{definition}\label{def:hashindex}
Let $\Gamma$ be a bipartite diagram with signs attached to vertices and edges. The \emph{doubly negative index} $\ind_{\#}\Gamma$
is the maximal number of  edges $e_1,\ldots,e_n$, such that
\begin{itemize}
\item each of the $e_i$ is a negative edge;
\item each of the $e_i$ is adjacent to at least one negative vertex;
\item the edges are \emph{cyclically} independent, that is, no $k$ edges lie on a cycle of length $2k$ or less.
\end{itemize}
\end{definition}
\begin{remark}
Notice that being cyclically independent for a single edge $e_1$ means precisely that this is the only edge connecting the two circles.
The potential problem with reducing negative Seifert circles in Remark~\ref{rem:twonegative} is due to lack of cyclical independence of a pair
of edges.
\end{remark}

We can now prove the result, which clearly implies Theorem~\ref{main2} from the introduction.
We have the following result.
\begin{theorem}
\begin{itemize}
\item[(a)] Suppose that $D$ is a link diagram having $n_-$ negatively oriented circles. Then at least $\ind_{\#}(\Gamma_D)$ circles can
be eliminated. 

\item[(b)] In particular, if $D$ is a diagram of a strongly quasipositive fibered $\Cc$-link and $n_-=\ind_{\#}(\Gamma_D)$, then the Yamada--Vogel algorithm combined with
the Murasugi--Przytycki algorithm make $D$ into a quasipositive braid.
\end{itemize}
\end{theorem}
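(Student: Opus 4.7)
The plan is as follows. For (a), let $\{e_1,\dots,e_n\}$, with $n=\ind_{\#}(\Gamma_D)$, be a maximal cyclically independent family of negative edges, each incident to at least one negative vertex, and for each $e_i$ choose a negative endpoint $C_1^{(i)}$. I plan to perform the Murasugi--Przytycki move along each $e_i$ in turn, with $C_1^{(i)}$ playing the role of $C_1$ from Lemma~\ref{lem:MPproperties}. Each such move merges $C_1^{(i)}$ with its other endpoint $C_2^{(i)}$ into a single Seifert circle whose sign is that of $C_2^{(i)}$; the negative circle $C_1^{(i)}$ therefore disappears and $n_-$ decreases by exactly one. The only $\reid_1$ move involved is determined by the signs of $C_1^{(i)}$ and of the crossing at $e_i$: since $C_1^{(i)}$ is negatively oriented and $e_i$ corresponds to a negative crossing, the move is $\reid_1^{\downarrow\boxminus\ominus}$, one of the two types permitted by Theorem~\ref{th:reid1}. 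The remaining moves used by the MP procedure are $\reid_2$ and $\reid_3$, so each MP move qualifies as an elimination in the sense of Section~\ref{sec:murasugiprzytycki}.

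The substantive part is to check that the eliminations can be performed sequentially. The effect of one MP move on $\Gamma_D$ is a controlled contraction of $e_1$, in the sense of \cite{St2,Tr2}: $C_1^{(1)}$ and $C_2^{(1)}$ are identified and any newly parallel edges are merged. I plan to verify that cyclic independence of the remaining family is preserved under this contraction. For $k=2$, the prohibition on two $e_i$'s lying on a common $4$-cycle precisely excludes the pathology of Remark~\ref{rem:twonegative}, where contracting $e_1$ would convert $e_2$ into a pair of parallel edges and thereby spoil the hypothesis of Proposition~\ref{prop:neg1}; the higher-$k$ conditions rule out the analogous obstructions that appear when several $e_i$'s are contracted in succession. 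Granting this, each $e_i$ remains a negative edge adjacent to its chosen negative endpoint after all previous contractions, so the $n$ MP moves may be performed in the given order and together eliminate $n=\ind_{\#}(\Gamma_D)$ negative circles. The bookkeeping for the general-$k$ case is what I expect to be the principal technical obstacle; Traczyk's index calculus in \cite{Tr2} should provide the right framework.

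For (b), the assumption $n_-=\ind_{\#}(\Gamma_D)$ guarantees that part (a) produces a diagram $D'$ of $L_r$ with no negative Seifert circles, reached from $D_r$ by a sequence of $\reid_2$, $\reid_3$ and $\reid_1^{\downarrow\boxminus\ominus}$ moves; in particular $\rot-\writhe$ is unchanged, see Table~\ref{tab:changes}. Theorem~\ref{thm:YV1} cannot be quoted verbatim for $D'$, since a priori $D'$ need not be a complex stereographic diagram, but its proof only uses the identity $\chi(F)=\rot(D')-\writhe(D')$ together with Theorem~\ref{th:et}. The first equality follows from Theorem~\ref{th:charlink} applied to $D_r$ combined with the invariance of $\rot-\writhe$ just noted. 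Running the Yamada--Vogel algorithm on $D'$, which having no negative Seifert circles requires only Step~1 and thus preserves both $\rot$ and $\writhe$, therefore produces, via Theorem~\ref{th:et}, a quasipositive braid representing $L_r$.
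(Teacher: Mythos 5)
Your proposal is correct and follows essentially the same route as the paper: perform Murasugi--Przytycki moves along the cyclically independent negative edges in turn, deferring the preservation of (cyclic) independence under contraction to Traczyk and Murasugi--Przytycki, and conclude (b) by reducing to the $n_-=0$ case. Your treatment of (b) is in fact slightly more careful than the paper's, which simply cites Theorem~\ref{thm:YV1}, whereas you correctly observe that only the identity $\chi(F)=\rot-\writhe$ (preserved by the moves used) and Theorem~\ref{th:et} are actually needed.
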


\begin{proof}
We proceed by induction. Suppose $\ind_{\#}\Gamma=n$ and let $e_1,\ldots,e_n$ be the edges satisfying the three points 
of Definition~\ref{def:hashindex}.
Suppose $e_1$ connects Seifert circles $C_1$ and $C_2$, with $C_1$ negative. Apply the Murasugi--Przytycki move. 
According to Lemma~\ref{lem:MPproperties}, this move decreases $n_-$ by $1$. Call the new diagram $D_1$.
It has $n_--1$ negative circles (because $C_1$ has disappeared). We argue that the index of $\Gamma_{D_1}$ is at least $\ind_{\#}(\Gamma_D)-1$.

To show this, notice that by \cite[Proof of Theorem 3]{Tr1} the set of cyclically independent edges is also independent. 
The definition of independence is quite
intricate, but the bottom line is that after applying the Murasugi--Przytycki move along the edge $e_1$, the remaining edges $e_2,\ldots,e_n$
are independent and 
cyclically independent in the new graph $D_1$; see \cite{MP,St2} for a detailed description of the effect of the Murasugi--Przytycki operation
on graphs and \cite{MP,Tr1} for the relation between the graph index and the cyclic index. In particular $\ind_{\#}(\Gamma_{D_1})\ge \ind_{\#}(\Gamma_D)-1$.

We apply the Murasugi--Przytycki move again, this time to $e_2$. Repeating the process we eliminate $\ind_\#(\Gamma_D)$ negative circles.

If $\ind_\#=n_-$ then the procedure eliminates all the negative circles and we conclude by Theorem~\ref{main1}.
\end{proof}

Note that unlike in the proof of Proposition~\ref{prop:neg1},
in the proof of the first part of Theorem~\ref{main2} we did not use the fact that $D$ is a diagram of a transverse link. Therefore there might
be a possibility to have some more control over the quantity $\ind_{\#}(\Gamma_D)$. We conclude this section by two open questions.
\begin{question}\
\begin{itemize}
\item Is it true that for a diagram of a transverse link $\ind_{\#}(D)=\ind_-(D)$?
\item Is it true that for a diagram of a $\Cc$-link we have $\ind_{\#}(D)=n_-$?
\end{itemize}
\end{question}

\section{Proofs}\label{sec:proofs}

\subsection{Preliminaries}\label{sec:preliminaries}

We work in local coordinates. Write $z_0=(x_0,y_0) \in \mathcal{C} \subset \Cc^2$. Suppose $z_0$
is a smooth point of $\mathcal{C}$
and consider the Taylor expansion of $f$ near $z_0$.
\begin{equation}\label{eq:Ftaylor}
f(x_0+\eta,y_0+\xi)=a\eta+b\xi+c\eta^2+d\eta\xi+e\xi^2+O(\|(\eta,\xi)\|^3)
\end{equation}
for some complex numbers $a,b,c,d$ and $e$. Choose a 
local parametrization of $\mathcal{C}$ near $z_0$:
\begin{equation}\label{eq:local}
t\mapsto z_0+t(\alpha,\beta)+t^2(\gamma,\delta) + O(t^3)
= \left\{\begin{array}{rcl}
x(t) &=& x_0 + \alpha t + \gamma t^2 + O(t^3)\\
y(t) &=& y_0 + \beta t + \delta t^2 + O(t^3)
\end{array}\right.
\end{equation}
for some complex numbers $\alpha,\beta,\gamma$ and $\delta$. We can find 
relations between $a,b,c,d,e$ and $\alpha,\beta,\gamma,\delta$ by substituting
\eqref{eq:local} into \eqref{eq:Ftaylor}. We immediately recover that $(a,b)$ 
is related to $(\alpha,\beta)$ by the relation $a\alpha+b\beta=0$. 
As $\grad F(z_0)=(a,b)\neq (0,0)$, changing $t$ by a complex factor, we can and will actually assume that 
\[a=-\beta \quad \text{ and } \quad b=\alpha.\] 
The constants $\gamma$ and $\delta$ are related to the coefficients 
$c,d,e$ by the formula
\begin{equation}\label{eq:seconddiff}
a\gamma + b\delta+c\alpha^2+d\alpha\beta+e\beta^2=0.
\end{equation}

The function ``square of the distance to origin'' on $\mathcal{C}$ is given in the local 
parametrization \eqref{eq:local} by
\begin{equation}
\label{eq:expandt}
\begin{split}
t\mapsto \|z_0\|^2+&2\Re \big[ t(\ol{x_0}\alpha+\ol{y_0}\beta) \big]+\\
&+|t|^2(|\alpha|^2+|\beta|^2)+2\Re 
\big[t^2(\ol{x_0}\gamma+\ol{y_0}\delta)\big]+O(|t|^3).
\end{split}
\end{equation}


Let 
\begin{equation}\label{eq:hklnew}
h=|\alpha|^2+|\beta|^2 \qquad k=\Re(\ol{x_0}\gamma+\ol{y_0}\delta) \qquad 
\ell=\Im(\ol{x_0}\gamma+\ol{y_0}\delta).
\end{equation}
After writing $t=u+\ii v$, the square of the distance function $t = (u,v) \mapsto 
|x(t)|^2+|y(t)|^2$ has the following Hessian:
\[H = \begin{pmatrix} h+2k && -2\ell\\-2\ell && h-2k \end{pmatrix}.\]
Since $h>0$, the trace of this matrix is positive, so that the index of the 
critical point is either $0$ or $1$. 

The determinant is \[\det H = h^2-4k^2-4\ell^2.\] 
\begin{itemize}
  \item if $\det H > 0$, this is the \defi{elliptic case} $\mathcal{E}$;
  \item if $\det H < 0$, this is the \defi{hyperbolic case} $\mathcal{H}$;
  \item if $\det H = 0$, this is the \defi{degenerate case}.
\end{itemize}

The degenerate case can be excluded at points of interest by imposing genericity conditions on $\mathcal{C}$; see Section~\ref{sec:afewwords}
for more details.

\subsection{Proof of Proposition~\ref{prop:localindex}}\label{sec:proofoflocalindex}

The first result links the nature of a non-transversality point 
with the topological modification at this point.
Let us recall the statement of Proposition~\ref{prop:localindex}.

\begin{proposition}
The intersection index of $\mathcal{C}$ and $\{Jf(x,y)=0\}$ at $z$ is $1$ 
if and only if $z$ is the saddle point (of Morse index $1$, case $\mathcal{H}$) of the 
intersection. 
It corresponds to move $\Ii$, the attachment of a $1$-handle.
  
If the intersection index is $-1$, then $z$ 
is a local minimum (of Morse index $0$, case $\mathcal{E}$). 
It corresponds to move $\Oo$, the birth of a component.
\end{proposition}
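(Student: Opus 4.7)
The plan is to compute the intersection index of $\mathcal{C}$ with $\{Jf=0\}$ at the point $z_0$ using the local parametrization from Section~\ref{sec:preliminaries}, and to compare the resulting sign with the determinant of the Hessian of the squared-distance function. Both quantities will be expressed in terms of the same basic data $(\alpha,\beta,\gamma,\delta)$ and $(x_0,y_0)$, which lets me relate them.

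First I would pull the function $Jf$ back to $\mathcal{C}$ via the parametrization \eqref{eq:local} and expand it to first order in $t$ and $\bar t$. Using $a=-\beta$, $b=\alpha$ and $f_x(x(t),y(t))=a+(2c\alpha+d\beta)t+O(t^2)$, $f_y(x(t),y(t))=b+(d\alpha+2e\beta)t+O(t^2)$, the constant term of $Jf(x(t),y(t))=f_x\bar y-f_y\bar x$ is $-\beta\bar y_0-\alpha\bar x_0$, which vanishes precisely by the non-transversality hypothesis ($\bar\alpha x_0+\bar\beta y_0=0$, equivalently $\alpha\bar x_0+\beta\bar y_0=0$). A short calculation then gives
\[
Jf(x(t),y(t))=A\,t-h\,\bar t+O(|t|^2),\qquad A:=(2c\alpha+d\beta)\bar y_0-(d\alpha+2e\beta)\bar x_0,
\]
with $h=|\alpha|^2+|\beta|^2$ as in \eqref{eq:hklnew}. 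Since $\mathcal{C}$ is complex-oriented and $\{Jf=0\}$ is the zero locus of a complex-valued function, the intersection index at $z_0$ equals the local degree at $t=0$ of the map $t\mapsto Jf(x(t),y(t))$; for a linear map $t\mapsto At-h\bar t$ this degree is $\sgn(|A|^2-h^2)$.

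The crux is to show $A=-2K$, where $K=\bar x_0\gamma+\bar y_0\delta=k+i\ell$. Using the non-transversality condition one writes $(x_0,y_0)=\mu(\bar\beta,-\bar\alpha)$ for some $\mu\in\Cc$, so $(\bar x_0,\bar y_0)=\bar\mu(\beta,-\alpha)$. Substituting gives
\[
A=-\bar\mu\bigl[2c\alpha^2+2d\alpha\beta+2e\beta^2\bigr],\qquad K=\bar\mu(\beta\gamma-\alpha\delta),
\]
and the quadratic constraint \eqref{eq:seconddiff} (which reads $c\alpha^2+d\alpha\beta+e\beta^2=\beta\gamma-\alpha\delta$) turns these into $A=-2K$, hence $|A|^2=4|K|^2$. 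This is the step I expect to be the main bookkeeping obstacle, though once the substitution $(x_0,y_0)=\mu(\bar\beta,-\bar\alpha)$ is made it is routine.

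Finally I would combine with the Hessian computation from Section~\ref{sec:preliminaries}: there $\det H=h^2-4(k^2+\ell^2)=h^2-4|K|^2$, so the intersection index is
\[
\sgn(|A|^2-h^2)=\sgn(4|K|^2-h^2)=-\sgn(\det H).
\]
Therefore the index is $+1$ exactly when $\det H<0$ (hyperbolic case $\mathcal{H}$, saddle, $\Ii$ move), and $-1$ exactly when $\det H>0$ (elliptic case $\mathcal{E}$, local minimum, $\Oo$ move). The degenerate case $\det H=0$ is excluded by the genericity assumption on $\mathcal{C}$ discussed in Section~\ref{sec:afewwords}. This yields the proposition.
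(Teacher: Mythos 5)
Your proposal is correct and follows essentially the same route as the paper: the paper likewise reduces the intersection index to the orientation behaviour of the linearization of $Jf$ along the tangent direction of $\mathcal{C}$, obtaining the map $t\mapsto -2(k+\ii\ell)t-h\ol t$ (so $\sgn(4|K|^2-h^2)$) and compares it with $\det H=h^2-4(k^2+\ell^2)$ from the Hessian of the distance function. The only cosmetic differences are that you pull $Jf$ back through the full parametrization instead of restricting the ambient linearization to the tangent line, and that you carry a general proportionality factor $\mu$ where the paper normalizes $\ol{x_0}=\beta$, $\ol{y_0}=-\alpha$; both yield the identical identity $A=-2K$ via \eqref{eq:seconddiff}.
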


\begin{remark}
It will follow from the local description of the handle attachment given below that the
0-handle corresponds to adding to the link diagram a round circle and not a more complicated
diagram of an unknot. Likewise, a 1-handle corresponds to adding a handle as
in Figure~\ref{fig:iiiplus}, that is, the 1-handle is not tangled in any way.

The reason why more complicated changes do not occur when the handle is attached is
the genericity condition, more precisely, Condition~\ref{con:almosttrivial}. 
\end{remark}

\begin{lemma}
\label{lem:handleindex}
The critical point has Morse index $0$ or $1$.
The Morse index is $1$ if and only if 
$\left(|\alpha|^2+|\beta|^2\right)^2<4|\beta\gamma-\alpha\delta|^2$.
\end{lemma}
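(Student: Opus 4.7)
The plan is to leverage the Hessian calculation already carried out in Section~\ref{sec:preliminaries} and to translate the resulting inequality into the invariants of the parametrization appearing in the statement.

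First, recall that the trace of the Hessian $H$ is $2h = 2(|\alpha|^2+|\beta|^2)$, which is strictly positive at any smooth point of $\mathcal{C}$. Hence both eigenvalues of $H$ cannot be simultaneously negative, so the Morse index of the squared distance function on $\mathcal{C}$ at $z_0$ is either $0$ or $1$. The index equals $1$ precisely when $\det H<0$, and from Section~\ref{sec:preliminaries} we have
\[
\det H = h^2 - 4k^2 - 4\ell^2 = h^2 - 4\,|\overline{x_0}\gamma+\overline{y_0}\delta|^2.
\]
So the first half of the lemma is immediate, and the task reduces to showing that the condition $h^2<4|\overline{x_0}\gamma+\overline{y_0}\delta|^2$ coincides (under the working assumptions) with $h^2<4|\beta\gamma-\alpha\delta|^2$.

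For the second half, I would exploit the fact that $z_0$ is a critical point of $|x|^2+|y|^2$ on $\mathcal{C}$, which is equivalent to the Hermitian orthogonality
\[
\overline{x_0}\,\alpha+\overline{y_0}\,\beta=0.
\]
This allows us to write $(x_0,y_0)$ as a multiple of the Hermitian orthogonal vector to $(\alpha,\beta)$, namely
\[
(x_0,y_0)=\lambda\,(-\overline{\beta},\overline{\alpha})
\]
for some $\lambda\in\Cc$. Substituting this yields the key algebraic identity
\[
\overline{x_0}\,\gamma+\overline{y_0}\,\delta = -\overline{\lambda}\,(\beta\gamma-\alpha\delta),
\]
so the left-hand side and the quantity $\beta\gamma-\alpha\delta$ differ only by a scalar factor. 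Combining this identity with the normalization $a=-\beta$, $b=\alpha$ (which forces $h=|a|^2+|b|^2$, i.e. fixes the size of $|\lambda|$ in terms of the chosen parametrization of $\mathcal{C}$ at $z_0$), the Hessian inequality $h^2<4|\overline{x_0}\gamma+\overline{y_0}\delta|^2$ translates directly into the stated condition $h^2<4|\beta\gamma-\alpha\delta|^2$.

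The main obstacle is the bookkeeping in this last step: one has to verify carefully that the rescaling implicit in the normalization $a=-\beta,\ b=\alpha$ is precisely what turns the factor $|\lambda|^2$ into a harmless constant, so that the transversality of $\mathcal{C}$ with $\{Jf=0\}$ (which will be needed in Proposition~\ref{prop:localindex}) corresponds cleanly to the non-vanishing of $h^2-4|\beta\gamma-\alpha\delta|^2$. Once this bookkeeping is done, both parts of the lemma follow at once from the identification above.
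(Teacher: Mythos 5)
Your overall strategy coincides with the paper's: the positivity of the trace of $H$, namely $2h=2(|\alpha|^2+|\beta|^2)>0$, rules out index $2$, and the handle index is $1$ exactly when $\det H=h^2-4k^2-4\ell^2<0$; the critical-point condition $\ol{x_0}\alpha+\ol{y_0}\beta=0$ is then used to replace $\ol{x_0}\gamma+\ol{y_0}\delta$ by a scalar multiple of $\beta\gamma-\alpha\delta$. The paper does exactly this, except that instead of carrying a free scalar $\lambda$ it normalizes outright $\ol{x_0}=\beta$, $\ol{y_0}=-\alpha$ (equation \eqref{eq:z1z2}), after which $\ol{x_0}\gamma+\ol{y_0}\delta=\beta\gamma-\alpha\delta$ on the nose and the claimed inequality is immediate.

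The gap is in your last step. Writing $(x_0,y_0)=\lambda(-\ol{\beta},\ol{\alpha})$ gives $|\ol{x_0}\gamma+\ol{y_0}\delta|^2=|\lambda|^2\,|\beta\gamma-\alpha\delta|^2$ with $|\lambda|=r_0/\sqrt{|\alpha|^2+|\beta|^2}$, and the normalization $a=-\beta$, $b=\alpha$ does \emph{not} make this factor equal to $1$: that normalization only ties the scale of $(\alpha,\beta)$ to the scale of $\nabla f$, and one may still multiply $f$ by an arbitrary constant $\mu$ and reparametrize $t\mapsto \mu t$, under which $(|\alpha|^2+|\beta|^2)^2$ scales as $|\mu|^4$ while $|\beta\gamma-\alpha\delta|^2$ scales as $|\mu|^6$. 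Hence the inequality in the statement is not invariant under the freedom that remains after imposing $a=-\beta$, $b=\alpha$, and your claim that the Hessian inequality ``translates directly'' into the stated one is false without a further choice of scale. The missing ingredient is precisely the additional normalization $\ol{x_0}=\beta$, $\ol{y_0}=-\alpha$, i.e.\ $\lambda=-1$ (equivalently $|\alpha|^2+|\beta|^2=r_0^2$), which the paper imposes at the start of its proof of this lemma; with that choice your identity does yield the stated criterion, and the rest of your argument is fine.
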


\begin{proof}
The condition for $(x_0,y_0)$ to be a critical point of the square of 
the distance to origin function (i.e. $Jf(x_0,y_0)=0$) means that 
$\langle (x_0,y_0) \mid (\alpha,\beta) \rangle_\Cc=0$. 
On rescaling $f$ and $(x,y)$ we can actually suppose that 
\begin{equation}
\label{eq:z1z2}
\ol{x_0}=\beta \quad \text{ and } \quad \ol{y_0}=-\alpha.
\end{equation}
The first order term of \eqref{eq:expandt} vanishes.
The second order term of \eqref{eq:expandt} takes the form
\[|t|^2(|\alpha|^2+|\beta|^2)+2\Re \big[ t^2(\beta\gamma-\alpha\delta)\big].\]
In this situation we get :
\begin{equation}\label{eq:hkl}
h=|\alpha|^2+|\beta|^2 \qquad k=\Re(\beta\gamma-\alpha\delta) \qquad 
\ell=\Im(\beta\gamma-\alpha\delta).
\end{equation}
As above the Hessian
\[H = \begin{pmatrix} h+2k && -2\ell\\-2\ell && h-2k \end{pmatrix}.\]
has index $0$ or $1$. 
As $\det H = h^2-4k^2-4\ell^2$, we obtain the result.
\end{proof}

\bigskip

In conjunction with the next lemma we will get the proof of Proposition 
\ref{prop:localindex}.
\begin{lemma}
\label{lem:intersectionindex}
The intersection index of $\mathcal{C}$ and $\{Jf(x,y)=0\}$ is $-1$ or $+1$.
The intersection index is $+1$ if and only if 
$\left(|\alpha|^2+|\beta|^2\right)^2<4|\beta\gamma-\alpha\delta|^2$.
\end{lemma}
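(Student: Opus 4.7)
My plan is to compute the intersection index as the local degree at $t=0$ of the restriction $g(t) := Jf(x(t), y(t))$, viewed as a map $\mathbb{C}\to\mathbb{C}$, where $t\mapsto(x(t),y(t))$ is the local parametrization \eqref{eq:local}. Indeed, $\mathcal{C}$ carries its complex orientation and $\{Jf=0\}$ inherits a coorientation from $Jf:\mathbb{C}^2\to\mathbb{C}$ (which is \emph{not} holomorphic), so the intersection multiplicity of the two surfaces at $z_0$ equals the signed local degree of $g$ at $0$. Since the intersection is assumed transverse, this degree is $\pm1$.

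Next I would compute the linear part of $g$ in $t$ and $\overline{t}$. Using \eqref{eq:Ftaylor} with $a=-\beta$, $b=\alpha$, together with the substitutions $\eta=\alpha t+\gamma t^2+O(t^3)$ and $\xi=\beta t+\delta t^2+O(t^3)$, one gets
\[
\frac{\partial f}{\partial x}(x(t),y(t))=-\beta+(2c\alpha+d\beta)t+O(t^2),\qquad
\frac{\partial f}{\partial y}(x(t),y(t))=\alpha+(d\alpha+2e\beta)t+O(t^2),
\]
while $\overline{x(t)}=\beta+\overline{\alpha}\,\overline{t}+O(\overline{t}^2)$ and $\overline{y(t)}=-\alpha+\overline{\beta}\,\overline{t}+O(\overline{t}^2)$, by \eqref{eq:z1z2}. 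Plugging these into $Jf=\frac{\partial f}{\partial x}\overline{y}-\frac{\partial f}{\partial y}\overline{x}$, the constant term cancels. The coefficient of $t$ simplifies to $-2(c\alpha^2+d\alpha\beta+e\beta^2)$, which by the second-order relation \eqref{eq:seconddiff} equals $-2(\beta\gamma-\alpha\delta)$. The coefficient of $\overline{t}$ is directly $-(|\alpha|^2+|\beta|^2)$. Hence
\[
g(t)=At+B\,\overline{t}+O(|t|^2),\qquad A=-2(\beta\gamma-\alpha\delta),\quad B=-(|\alpha|^2+|\beta|^2).
\]

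Finally, I would read off the local degree from the linear part. Writing $t=u+\ii v$ and $A=A_1+\ii A_2$, $B=B_1+\ii B_2$, the real Jacobian matrix of $t\mapsto At+B\overline{t}$ has determinant $|A|^2-|B|^2$ (a standard computation). Since the intersection is transverse, this determinant is nonzero, so the local degree equals $\sgn(|A|^2-|B|^2)$. Substituting,
\[
|A|^2-|B|^2=4|\beta\gamma-\alpha\delta|^2-(|\alpha|^2+|\beta|^2)^2,
\]
so the intersection index is $+1$ iff $(|\alpha|^2+|\beta|^2)^2<4|\beta\gamma-\alpha\delta|^2$, and $-1$ otherwise, as claimed. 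Combined with Lemma~\ref{lem:handleindex}, this immediately gives Proposition~\ref{prop:localindex}.

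The only real obstacle is the bookkeeping in the Taylor expansion, in particular spotting that the apparently independent coefficients $c,d,e,\gamma,\delta$ collapse via \eqref{eq:seconddiff} to the single quantity $\beta\gamma-\alpha\delta$; everything else is a routine check of orientations and a determinant computation for $\mathbb{R}$-linear maps of the form $t\mapsto At+B\overline{t}$.
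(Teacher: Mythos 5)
Your argument is correct and is essentially the paper's own proof: both reduce the intersection index to the sign of the determinant of the $\Rr$-linear map $t\mapsto At+B\,\ol{t}$ with $A=-2(\beta\gamma-\alpha\delta)$ and $B=-(|\alpha|^2+|\beta|^2)$, using \eqref{eq:seconddiff} to collapse $c,d,e$ into $\beta\gamma-\alpha\delta$ and the identity $\det=|A|^2-|B|^2$. The only cosmetic difference is that you compose $Jf$ with the parametrization $t\mapsto(x(t),y(t))$ and then linearize, whereas the paper linearizes $Jf$ on $\Cc^2$ first and restricts to $t\mapsto(\alpha t,\beta t)$; the resulting linear map is identical.
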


\begin{proof}
For the proof of this lemma we substitute \eqref{eq:Ftaylor} into \eqref{eq:Jf} 
(with $x=x_0+\eta$ and $y=y_0+\xi$): 
\[Jf(x,y)=\left|\begin{matrix} -\beta+2c\eta+d\xi &\alpha+d\eta+2e\xi \\
\ol{x_0+\eta} & \ol {y_0+\xi}\end{matrix}\right|+O(\|\eta,\xi\|^2).\]
The linear terms in $\eta$ and $\xi$ of $Jf(x,y)$ are
\begin{align*}
LJf(x,y)&=-\beta\ol{\xi}-(2c\eta+d\xi)\alpha 
-(d\eta+2e\xi)\beta-\alpha\ol{\eta}\\
&=\eta(-\beta d-2c\alpha)+\ol{\eta}(-\alpha)+\xi(-\alpha d-2\beta 
e)+\ol{\xi}(-\beta),
\end{align*}
where we substituted $\ol{x_0}=\beta$ and $\ol{y_0}=-\alpha$ as in 
\eqref{eq:z1z2}.

\smallskip
Suppose $\{Jf(x,y)=0\}$ intersects (smoothly) transversally with $\mathcal{C}$ at $z_0$. 
Then the intersection index is equal to the 
intersection index of the linearized equations, that is of
\[\{LJf(x,y)(\eta,\xi)=0\}\quad\text{ and }\quad\{a\eta+b\xi=0\}.\]
A parameterization of $\{a\eta+b\xi=0\}$ is given by $t\mapsto (\alpha t,\beta 
t)$.
The intersection index is $+1$ or $-1$ depending on whether the map from $\Rr^2$ 
to $\Rr^2$ given by
\[t\mapsto LJf(x,y)(\alpha t,\beta t)\]
preserves or changes the orientation. Explicitly this map is given by
\begin{equation}\label{eq:restrictedmap}
t\mapsto \left[-\alpha(\beta d+2c\alpha)-\beta(\alpha d+2\beta e)\right] t
+(-|\alpha|^2-|\beta|^2)\ol{t}.
\end{equation}
Notice that the expression in brackets is by \eqref{eq:seconddiff} equal to 
$2(-\beta\gamma+\alpha\delta)$.
With the notation of \eqref{eq:hkl} we rewrite this as
\[t\mapsto -2(k+\ii\ell)t-h\ol{t},\]
that is, in real coordinates $(u,v)$ such that $t=u+\ii v$,
\[(u,v)\mapsto-\begin{pmatrix} 2k+h & -2\ell \\ 2\ell & 
2k-h\end{pmatrix}\begin{pmatrix} u \\ v\end{pmatrix}.\]
The map preserves the orientation if and only if $4(k^2+\ell^2)>h^2$. 
\end{proof}

\bigskip
Combining Lemma~\ref{lem:intersectionindex} with Lemma~\ref{lem:handleindex}, we see that the intersection 
index of 
$\mathcal{C}$ and $\{Jf(x,y)=0\}$ at $z_0$ is positive if and only if it 
corresponds to a $1$-handle and is negative if and only if it corresponds to a $0$-handle.

\subsection{Proof of Lemma~\ref{lem:critproj}}\label{sec:proofofcritproj}

We recall the statement of Lemma~\ref{lem:critproj}.
\begin{lemma}
The critical points of the projection $\pi_r : L_r \to \Rr^2$
defined by $\pi_r = \pi'\circ \Psi_r$ are:
\begin{itemize}
  \item the points of non-transversality of $\mathcal{C}$ with $S_r$ given by the 
equation 
  $\begin{vmatrix}
\frac{\partial f}{\partial x}  & \frac{\partial f}{\partial y} \\
\ol x & \ol y \\
\end{vmatrix} = 0$;

  \item the points verifying 
  $x \frac{\partial f}{\partial x} + (r+y)\frac{\partial f}{\partial y} = 0$.
\end{itemize}
\end{lemma}

\begin{proof}

The proof is given in four steps. First we formulate the statement in real 
coordinates. Then we investigate critical points of the function given in real 
coordinates.

\medskip
\textbf{Step 1.} \emph{Complex vs real.}

\smallskip
The map $f : (x,y) \mapsto f(x,y)$ is written
in real coordinates in the following way.
$$f : (x, \ol x, y, \ol y) \mapsto \big(\Re(f), \Im(f) \big) 
= \left( \frac{f+\ol f}{2} , \frac{f-\ol f}{2\ii} \right)$$

We define $\rho_r(x,y) = |x|^2+|y|^2 - r^2$, so that $S_r = \{\rho_r=0\}$.
Be definition of $\pi_r = \pi' \circ \Psi_r$, 
we have $\pi_r : S_r \to \Cc$ where $S_r \subset \Cc^2$ with
$\pi_r(x,y) = \frac{x}{r+y}$, and seen as a real map 
$\pi_r : S_r \to \Rr^2$ where $S_r \subset \Rr^4$:
\begin{eqnarray*}
\pi_r(x,\ol x, y, \ol y) 
 & = & \left(\Re\left(\frac{x}{r+y}\right), \Im\left(\frac{x}{r+y}\right) 
\right) \\
 & = & \left( \frac{1}{2}\left(\frac{x}{r+y} + \frac{\ol x}{r+\ol y}\right) , 
\frac{1}{2\ii}\left(\frac{x}{r+y} - \frac{\ol x}{r+\ol y}\right) \right).  
\end{eqnarray*}

As $L_r$ is real one dimensional, the critical point of $\pi_r$ are 
the points $(x,y)$ such that $\dd \pi_r (x,y)=(0,0)$.
Equivalently these are the points $(x,y)$ such that
$\dd \Re(\pi_r) (x,y) = 0$ and $\dd \Im(\pi_r) (x,y) = 0$.

\bigskip

\textbf{Step 2.} \emph{Critical points of the real part.}

\smallskip
The critical points of $\Re(\pi_r)$ are obtained by considering the critical 
points
of $\Re(\pi_r)$ restricted to the set $\{\Re(f)=0\} \cap \{\Im(f)  = 0\} \cap \{\rho_r 
= 0\}$.
That is, the critical points are given by equation:
\begin{equation*}
\begin{vmatrix}
\frac{\partial \Re f}{\partial x} & \frac{\partial \Re f}{\partial \ol x} &
\frac{\partial \Re f}{\partial y} & \frac{\partial \Re f}{\partial \ol y} \\
\frac{\partial \Im f}{\partial x} & \frac{\partial \Im f}{\partial \ol x} &
\frac{\partial \Im f}{\partial y} & \frac{\partial \Im f}{\partial \ol y} \\
\frac{\partial \rho_r}{\partial x} & \frac{\partial \rho_r}{\partial \ol x} &
\frac{\partial \rho_r}{\partial y} & \frac{\partial \rho_r}{\partial \ol y} \\
\frac{\partial \Re \pi_r}{\partial x} & \frac{\partial \Re \pi_r}{\partial \ol x} &
\frac{\partial \Re \pi_r}{\partial y} & \frac{\partial \Re \pi_r}{\partial \ol y} 
\\
\end{vmatrix} = 0  
\end{equation*}

Remember that $\Re f = \frac{1}{2}(f+\ol f)$, $\Im f = \frac{1}{2\ii}(f-\ol 
f)$.
Since $f$ is holomorphic, $\frac{\partial f}{\partial \ol x} = 0$,
$\frac{\partial \ol f}{\partial x} = 0$, and 
$\overline{\frac{\partial f}{\partial x}} = \frac{\partial \ol f}{\partial \ol 
x}$. 
Also $\rho_r = |x|^2+|y|^2 -r^2= x\ol x+ y\ol y -r^2$,
$\Re(\pi_r) = \frac{1}{2}\left(\frac{x}{r+y} + \frac{\ol x}{r+\ol y}\right)$.

This yields: 
\begin{equation*}
\begin{vmatrix}
\frac{\partial f}{\partial x} & \overline{\frac{\partial f}{\partial x}} &
\frac{\partial f}{\partial y} & \overline{\frac{\partial f}{\partial y}} \\
\frac{\partial f}{\partial x} & -\overline{\frac{\partial f}{\partial x}} &
\frac{\partial f}{\partial y} & -\overline{\frac{\partial f}{\partial y}} \\
\ol x & x & \ol y & y \\
\frac{1}{r+y} & \frac{1}{r+\ol y} & -\frac{x}{(r+y)^2} & -\frac{\ol x}{(r+\ol 
y)^2} \\
\end{vmatrix} = 0 
\quad \text{i.e.} \quad
\begin{vmatrix}
\frac{\partial f}{\partial x} & \overline{\frac{\partial f}{\partial x}} &
\frac{\partial f}{\partial y} & \overline{\frac{\partial f}{\partial y}} \\
\frac{\partial f}{\partial x} & 0 &
\frac{\partial f}{\partial y} & 0 \\
\ol x & x & \ol y & y \\
\frac{1}{r+y} & \frac{1}{r+\ol y} & -\frac{x}{(r+y)^2} & -\frac{\ol x}{(r+\ol 
y)^2} \\
\end{vmatrix} = 0 
\end{equation*}
We expand the last determinant along the last line.

\begin{equation*}
\frac{-1}{r+y} 
\begin{vmatrix}
\overline{\frac{\partial f}{\partial x}} & \frac{\partial f}{\partial y} & 
\overline{\frac{\partial f}{\partial y}} \\
 0 & \frac{\partial f}{\partial y} & 0 \\
x & \ol y & y \\
\end{vmatrix}  
+\frac{1}{r+\ol y}
\begin{vmatrix}
\frac{\partial f}{\partial x} & \frac{\partial f}{\partial y} & 
\overline{\frac{\partial f}{\partial y}} \\
\frac{\partial f}{\partial x} & \frac{\partial f}{\partial y} & 0 \\
\ol x &  \ol y & y \\
\end{vmatrix}
+\frac{x}{(r+y)^2}
\begin{vmatrix}
\frac{\partial f}{\partial x} & \overline{\frac{\partial f}{\partial x}} & 
\overline{\frac{\partial f}{\partial y}} \\
\frac{\partial f}{\partial x} & 0  & 0 \\
\ol x & x  & y \\
\end{vmatrix}
-\frac{\ol x}{(r + \ol y)^2}
\begin{vmatrix}
\frac{\partial f}{\partial x} & \overline{\frac{\partial f}{\partial x}} 
&\frac{\partial f}{\partial y}  \\
\frac{\partial f}{\partial x} & 0 &\frac{\partial f}{\partial y}  \\
\ol x & x & \ol y \\ 
\end{vmatrix}=0
\end{equation*}
Denote 
$$J= \begin{vmatrix}
\frac{\partial f}{\partial x} & \frac{\partial f}{\partial y}  \\
\ol x &  \ol y \\ 
\end{vmatrix}.$$
The critical points are given be the equation:
$$
-\frac{1}{r+y}\frac{\partial f}{\partial y}\overline{J}
+\frac{1}{r+\ol y}\overline{\frac{\partial f}{\partial y}}J
-\frac{x}{(r+y)^2}\frac{\partial f}{\partial x}\overline{J}
+\frac{\ol x}{(r+\ol y)^2}\overline{\frac{\partial f}{\partial x}}J,$$
which is equivalent to the real equation:
\begin{equation}
\Im\left( 
\left(\frac{1}{r+\ol y}\overline{\frac{\partial f}{\partial y}} + 
\frac{\ol x}{(r+\ol y)^2}\overline{\frac{\partial f}{\partial x}}\right)
\begin{vmatrix}
\frac{\partial f}{\partial x}  & \frac{\partial f}{\partial y} \\
\ol x & \ol y \\
\end{vmatrix}
\right) = 0.
\end{equation}

\bigskip

\textbf{Step 3.} \emph{Critical points of the imaginary part.}

\smallskip
Acting as in Step 2 we show that the critical points of $\Im(\pi_r)$ are the points of $L_r$ 
given by the real equation:
\begin{equation}
\Re\left( 
\left(\frac{1}{r+\ol y}\overline{\frac{\partial f}{\partial y}} + 
\frac{\ol x}{(r+\ol y)^2}\overline{\frac{\partial f}{\partial x}}\right)
\begin{vmatrix}
\frac{\partial f}{\partial x}  & \frac{\partial f}{\partial y} \\
\ol x & \ol y \\
\end{vmatrix}
\right) = 0.
\end{equation}

\bigskip

\textbf{Step 4.} \emph{Conclusion of the proof.}

\smallskip
The critical points of $\pi_r$ are the points of $L_r$ given by the equation:
\begin{equation}
 \left(\frac{1}{r+y}\frac{\partial f}{\partial y} + 
\frac{x}{(r+y)^2}\frac{\partial f}{\partial x}\right)
\begin{vmatrix}
\frac{\partial f}{\partial x}  & \frac{\partial f}{\partial y} \\
\ol x & \ol y \\
\end{vmatrix}
 = 0.
\end{equation}

\end{proof}
\subsection{Proof of Theorem~\ref{th:reid1}}\label{sec:proofofreid1}
For the reader's convenience we recall the statement of Theorem~\ref{th:reid1}

\begin{theorem}
Suppose that $(x_0,y_0) \in L_{r_0}$ is a critical point of the projection  
$\pi_{r_0}$, but $(x_0,y_0)$ is a point where
$\mathcal{C}$ and $S_{r_0}$ have (smoothly) transverse intersection.
Then for $r$ near $r_0$ (going from $r_0-\epsilon$ to $r_0+\epsilon$, 
$\epsilon>0$) 
the topological type of the link is unchanged but
the projection $D_r = \pi_r(L_r) \subset \Rr^2$ changes by a Reidemeister
move of type $\reid_1^{\uparrow\boxplus\oplus}$ or 
$\reid_1^{\downarrow\boxminus\ominus}$.
\end{theorem}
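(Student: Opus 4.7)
The plan is to reduce to a local analysis around $z_0$, identify the planar cusp that appears in $D_{r_0}$ at $\pi_{r_0}(z_0)$, describe its resolution as $r$ varies, and then use the contact positivity of Corollary~\ref{cor:istransverse} via Proposition~\ref{prop:stokes} to exclude all but the two allowed $\reid_1$ types.

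First I would use the setup of Section~\ref{sec:preliminaries}: parametrize $\mathcal{C}$ near $z_0$ by $t\mapsto(x_0+\alpha t+\gamma t^{2}+\cdots,\,y_0+\beta t+\delta t^{2}+\cdots)$, set $R_0=r_0+y_0$ and $\lambda=r-r_0$, and note that the second bullet of Lemma~\ref{lem:critproj} becomes the identity $\alpha R_0=\beta x_0$. Expanding $\pi_r(z(t))=x(t)/(r+y(t))$ and using this identity to cancel the $O(t)$-term at $\lambda=0$ yields
\[
\pi_r(z(t))-\pi_r(z_0)=\tfrac{\alpha}{R_0^{2}}\,\lambda t+Q\,t^{2}+C\,t^{3}+O\big(|t|^{4},\lambda|t|^{2},\lambda^{2}\big),\quad Q=\tfrac{\gamma R_0-\delta x_0}{R_0^{2}},
\]
with $C\in\Cc$ generically nonzero. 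Transversality at $z_0$ forces $c_1=\ol{x_0}\alpha+\ol{y_0}\beta\neq 0$; after rotating $t$ assume $c_1>0$, so \eqref{eq:expandt} parametrizes $L_r$ near $0$ by $t(s)=r_0\lambda/c_1+\ii s+O(s^{2},\lambda s)$, $s\in\Rr$. Substituting and absorbing $\lambda$-dependent constants into a base point $W_\lambda$ gives the local picture of $D_r$ in $\Cc$ as
\[
s\longmapsto W_\lambda-Q s^{2}+\ii M\lambda s-\ii C s^{3}+\cdots,\qquad M=\tfrac{\alpha}{R_0^{2}}+\tfrac{2Q r_0}{c_1}.
\]

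At $\lambda=0$ this is an ordinary planar cusp, since $Q,C\neq 0$. A one-parameter deformation of such a cusp by a term linear in $s$ in a direction transverse to the cusp tangent resolves on one side of $\lambda=0$ into a smooth embedded arc and on the other into an immersed curve with a single transverse self-crossing bounding a small loop; explicitly, the self-intersection equation $\ii M\lambda=Q(s_1+s_2)+\ii C(s_1^{2}+s_1 s_2+s_2^{2})$ has real solutions $(s_1,s_2)$ with $s_1\neq s_2$ for exactly one sign of $\lambda$, confirming the dichotomy in our setting. This yields a Reidemeister $\reid_1$ move, and the isotopy class of $L_r$ does not change because no critical value of the distance function on $\mathcal{C}$ is crossed.

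The remaining and hardest step is to rule out six of the eight $\reid_1$ types. I would apply Proposition~\ref{prop:stokes} to the simple circuit $\gamma$ formed by the small loop: it has a single jump $\lambda_1$ and a single bounded region, whose signed index equals $+1$ or $-1$ according as the loop is oriented $\oplus$ or $\ominus$, so that $\lambda_1<2\,\ind(\gamma)\,\area(\gamma)$. A separate explicit calculation, using Lemma~\ref{lem:psir} to express the $Z$-coordinate along $\tilde L_r$, shows that the sign of $\lambda_1$ is rigidly controlled by which strand lies over the other at the self-crossing, hence by the crossing sign $\boxplus/\boxminus$ together with the local orientation data induced from the complex orientation of $\mathcal{C}$. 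Comparing these two sign constraints forces the pairings $\oplus\leftrightarrow\boxplus$ and $\ominus\leftrightarrow\boxminus$, while the sign of $\lambda$ for which the loop exists (determined by the signs of $Q$ and $C$ via the self-intersection equation above) matches creation ($\uparrow$) with $\boxplus\oplus$ and destruction ($\downarrow$) with $\boxminus\ominus$. The main obstacle is precisely this last bookkeeping step: one must simultaneously track the complex orientation of $\mathcal{C}$, the induced orientations of $L_r$ and of $D_r$, the $Z$-ordering of strands under $\Psi_r$, and the strict inequality $\dd Z>Y\dd X-X\dd Y$ from Corollary~\ref{cor:istransverse}; the rest of the argument is fairly mechanical once the local expansion above is in place.
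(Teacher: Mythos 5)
Your local setup is sound and parallels Steps 1--3 of the paper's proof: the vanishing of the linear term at $r=r_0$, the identification $Q=c_{r_0}$, and the one-sidedness of the self-intersection extracted from $\ii M\lambda=Q(s_1+s_2)+\ii C(s_1^2+s_1s_2+s_2^2)$ all match. (One small slip: the cubic coefficient of the image curve also receives a contribution from the $s^2$-term of $t(s)$, i.e. from the curvature of the conic $\{t:(x(t),y(t))\in S_r\}$ fed into $Qt^2$, so it is $\ii(2Qa_2-C)$ with $a_2$ the real coefficient of $s^2$ in $t(s)$, rather than $-\ii C$; this does not change the structure of the argument.) The paper instead models the projection as $t\mapsto t^2$ and the parameter locus as a family of conics, elliptic or hyperbolic according to the Hessian of the distance function, but the two descriptions are equivalent.

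The genuine gap is in the exclusion of six of the eight types, which is the actual content of the theorem and which you leave as ``a separate explicit calculation'' and ``bookkeeping.'' Moreover, the one concrete mechanism you do propose, Proposition~\ref{prop:stokes} applied to the small loop, cannot do the job: for a positively oriented loop the inequality reads $\lambda_1<2\area(A_1)$ with both sides tending to $0$ as the loop shrinks, so it yields no sign information and in particular cannot exclude $\reid_1^{\uparrow\boxminus\oplus}$ or $\reid_1^{\downarrow\boxplus\oplus}$; even for negative loops it only pins down the sign of the jump, not the crossing sign, without the very orientation bookkeeping you defer. What is actually needed, and what occupies Steps 5 and 6 of the paper's proof, is a direct computation from the holomorphic data: the sign of the $Z$-coordinate of $\Psi_r$ at the two parameters $t_{\pm}$ lying over the double point (giving $\boxplus$ in the elliptic case and $\boxminus$ in the hyperbolic case), and the image under $\dd\pi_r$ of the positively oriented tangent vector $v=\ii\,\overline{\langle v_0\mid z\rangle_\Cc}\,v_0$ compared with an outward normal to the loop (giving $\oplus$ resp.\ $\ominus$), together with the observation that the same elliptic/hyperbolic dichotomy decides whether the loop appears for $r>r_0$ or for $r<r_0$. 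Until these three signs are computed and shown to be governed by one and the same quantity, the pairing of $\uparrow$ with $\boxplus\oplus$ and of $\downarrow$ with $\boxminus\ominus$ is asserted rather than proved.
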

\begin{proof}

\textbf{Step 1.} \emph{Local parameterization of $\mathcal{C}$.}

\smallskip

Recall that a local parameterization of 
the curve $\mathcal{C}$ near 
$(x_0,y_0)$ is given by 
\begin{equation}\label{eq:localparametr}
\left\{\begin{array}{rcl}
x(t) &=& x_0 + \alpha t + \gamma t^2 + O(t^3)\\
y(t) &=& y_0 + \beta t + \delta t^2 + O(t^3)
\end{array}\right.
\end{equation}

Denote by $r_0$ the radius of the sphere through $(x_0,y_0)$:
$r_0^2 = |x_0|^2+|y_0|^2$. 
By Lemma \ref{lem:critproj}, as 
$(x_0,y_0)$ is a critical point of $\pi_{r_0}$, we have 
\[x_0 \frac{\partial f}{\partial x}(x_0,y_0) + (r_0+y_0)\frac{\partial 
f}{\partial y}(x_0,y_0) = 0.\]

\bigskip

\textbf{Step 2.} \emph{Local equation of the parameters of the link $L_r$.}

\smallskip
Given a local parametrization of $\mathcal{C}$ we calculate the set of parameters
that get mapped to the sphere $S_r$.

As $L_r = \mathcal{C} \cap S_r$, combining \eqref{eq:localparametr} with the equation
of $S_r$ given by $\{|x|^2+|y|^2=r^2\}$ we obtain the following condition for $t$
(see also \eqref{eq:expandt}):
$$2\Re \big[ t(\ol{x_0}\alpha+\ol{y_0}\beta) \big]
+|t|^2(|\alpha|^2+|\beta|^2)
+2\Re\big[t^2(\ol{x_0}\gamma+\ol{y_0}\delta)\big]
+O(|t|^3) = r^2-r_0^2$$

We denote $t =  u + \ii v$, $\omega = \ol{x_0}\alpha+\ol{y_0}\beta$ and use 
the notations of \eqref{eq:hklnew} 
to get a local equation for the $t$, satisfying $(x(t),y(t)) \in L_r$:
$$2\Re \big[ (u+\ii v)\omega \big] 
+ (h+2k)u^2 + (h-2k)v^2 -4\ell uv + O(\|(u,v)\|^3) = r^2-r_0^2$$


For  $\omega,h,k,\ell$ fixed, it is a family of real curves in $\Rr^2$, depending on $r$.
Suppose the Hessian $H$ is non-degenerate.
Keeping only the Taylor expansion of order $2$,
we get a family of conics whose center is fixed.
These conics are given by equation
\begin{equation}\label{eq:conics}
2\Re \big[ (u+\ii v)\omega \big] 
+ (h+2k)u^2 + (h-2k)v^2 -4\ell uv = r^2-r_0^2.
\end{equation} 
These conics are tangent to lines perpendicular to the real line 
$(0\ol\omega)$.
The point of tangency for the conics corresponding to $r>r_0$
is on the half line $[0,\ol\omega)$
(while for $r<r_0$ it is on the half-line $[0,-\ol\omega)$); see Figure~\ref{fig:localtangents}.

  \begin{figure}[h]
    \begin{tikzpicture}[scale=1.5]

     \draw[->,>=latex,thick, gray] (-1.6,0)--(1.6,0); 
     \draw[->,>=latex,thick, gray] (0,-1.6)--(0,1.6); 

\begin{scope}[rotate=-20]

  \def\myc{-0.5}
   \draw [thick, color=gray,samples=100,smooth, domain=-1:1]  plot ({\myc - \x*\x}, {\x} );
  \draw[red,thick] (\myc,-1.3)--(\myc,1.3) node[pos=0, below] {$r<r_0$}  ;

  \def\myc{0}
   \draw [thick, color=gray,samples=100,smooth, domain=-1:1]  plot ({\myc - \x*\x}, {\x} );    
  \draw[red,thick] (\myc,-1.3)--(\myc,1.3) node[pos=0, below] {$r=r_0$} ;

  \def\myc{0.5}
   \draw [thick, color=gray,samples=100,smooth, domain=-1:1]  plot ({\myc - \x*\x}, {\x} );
  \draw[red,thick] (\myc,-1.3)--(\myc,1.3) node[pos=0, below] {$r>r_0$}  ;

    \fill (0,0) circle (1pt) node[below right] {$0$};
    \fill (1,0) circle (1pt) node[right] {$\ol\omega$};
    \fill (40:1) circle (1pt) node[right] {$\omega$};
    \draw (0,0)--(1,0);     
\end{scope}

\end{tikzpicture}
    \caption{Local tangents of the parameterizations.}\label{fig:localtangents}
  \end{figure}
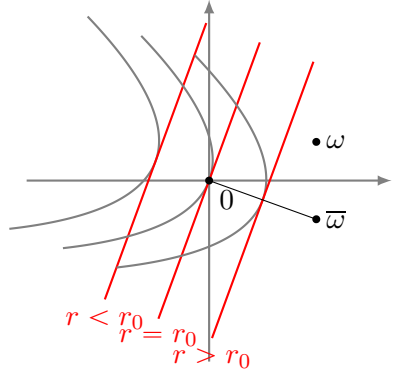

The family of these conics is:
\begin{itemize}
  \item in case $\mathcal{E}$: a family of ellipses,
  \item in case $\mathcal{H}$: a family of hyperbolas.
  \end{itemize}
This justifies the terminology defined in the last paragraph of Section~\ref{sec:preliminaries}.

It is crucial to note that the local pictures for the two situations are similar, but when $r$ goes from
$r<r_0$ to $r>r_0$ the movie is reversed. The key point will be that  
ellipses that intersect the tangent at the origin in two points are for radius $r>r_0$, while hyperbolas that intersect the tangent at the origin in two points are for radius $r<r_0$.

  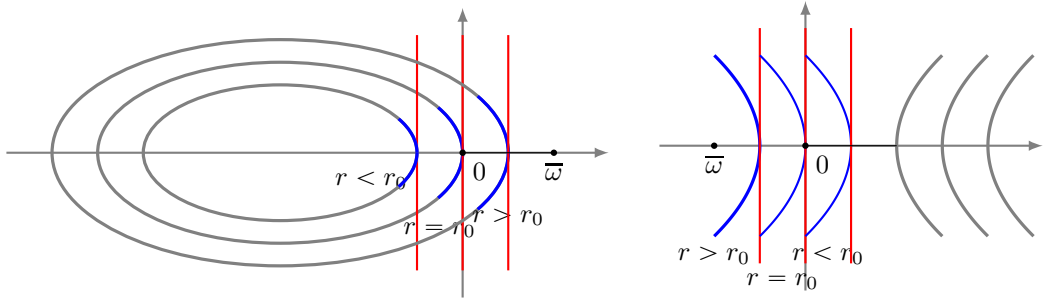
\begin{figure}[h]
    \begin{tikzpicture}[scale=1.2, font=\small]

     \draw[->,>=latex,thick, gray] (-5,0)--(1.6,0); 
     \draw[->,>=latex,thick, gray] (0,-1.6)--(0,1.6); 

\begin{scope}[rotate=0]

  \def\myc{-0.5}
 \draw [very thick, color=gray,samples=100,smooth, domain=-1:1] (-2,0) ellipse (2cm and 1cm);
 \draw [very thick, color=blue,samples=100,smooth, domain=-1:1] (0,0) arc (0:-30:2cm and 1cm);
 \draw [very thick, color=blue,samples=100,smooth, domain=-1:1] (0,0) arc (0:30:2cm and 1cm);

  \node[below] at (-1,-0.1) {$r<r_0$} ;
  \draw[red,thick] (\myc,-1.3)--(\myc,1.3)  ;

  \def\myc{0}
 \draw [very thick, color=gray,samples=100,smooth, domain=-1:1] (-2,0) ellipse (1.5cm and 0.75cm);
 \draw [very thick, color=blue,samples=100,smooth, domain=-1:1] (-0.5,0) arc (0:-30:1.5cm and 0.75cm);
 \draw [very thick, color=blue,samples=100,smooth, domain=-1:1] (-0.5,0) arc (0:30:1.5cm and 0.75cm);

   \node[below=7pt]  at (-0.25,-0.45) {$r=r_0$} ;
  \draw[red,thick] (\myc,-1.3)--(\myc,1.3);

  \def\myc{0.5}
 \draw [very thick, color=gray,samples=100,smooth, domain=-1:1] (-2,0) ellipse (2.5cm and 1.25cm);
 \draw [very thick, color=blue,samples=100,smooth, domain=-1:1] (0.5,0) arc (0:-30:2.5cm and 1.25cm);
 \draw [very thick, color=blue,samples=100,smooth, domain=-1:1] (0.5,0) arc (0:30:2.5cm and 1.25cm); 
   \node[below] at (0.5,-0.5) {$r>r_0$} ;
  \draw[red,thick] (\myc,-1.3)--(\myc,1.3) ;

    \fill (0,0) circle (1pt) node[below right] {$0$};
    \fill (1,0) circle (1pt) node[below] {$\ol\omega$};
    \draw (0,0)--(1,0);     
\end{scope}

\end{tikzpicture}   \quad 
    \begin{tikzpicture}[scale=1.2, font=\small]

     \draw[->,>=latex,thick, gray] (-1.6,0)--(2.6,0); 
     \draw[->,>=latex,thick, gray] (0,-1.6)--(0,1.6); 

\begin{scope}[rotate=0]

  \def\myc{-0.5}
  \draw [very thick, color=blue,samples=100,smooth, domain=-1:1]  plot ({\myc - 0.5*\x*\x}, {\x} );
  \draw [very thick, color=gray,samples=100,smooth, domain=-1:1]  plot ({1.5-\myc + 0.5*\x*\x}, {\x} );
  \node[below] at (-1,-1) {$r>r_0$} ;
  \draw[red,thick] (\myc,-1.3)--(\myc,1.3)  ;

  \def\myc{0}
  \draw [thick, color=blue,samples=100,smooth, domain=-1:1]  plot ({\myc - 0.5*\x*\x}, {\x} );
  \draw [very thick, color=gray,samples=100,smooth, domain=-1:1]  plot ({1.5-\myc + 0.5*\x*\x}, {\x} );  
   \node[below]  at (-0.25,-1.3) {$r=r_0$} ;
  \draw[red,thick] (\myc,-1.3)--(\myc,1.3);

  \def\myc{0.5}
   \draw [thick, color=blue,samples=100,smooth, domain=-1:1]  plot ({\myc -0.5* \x*\x}, {\x} );
  \draw [very thick, color=gray,samples=100,smooth, domain=-1:1]  plot ({1.5-\myc + 0.5*\x*\x}, {\x} );
   \node[below] at (0.25,-1) {$r<r_0$} ;
  \draw[red,thick] (\myc,-1.3)--(\myc,1.3) ;

    \fill (0,0) circle (1pt) node[below right] {$0$};
    \fill (-1,0) circle (1pt) node[below] {$\ol\omega$};
    \draw (0,0)--(1,0);     
\end{scope}

\end{tikzpicture}    
    \caption{case $\mathcal{E}$: a family of ellipses (left) ; 
    case $\mathcal{H}$: a family of hyperbolas (right).
    \label{fig:familyconics}
    }
  \end{figure}
 
\bigskip
  
\textbf{Step 3.} \emph{Local equation of the projection.}

The projection $\pi_r$ is defined by $\pi_r(x,y) = \frac{x}{r + y}$
(where $(x,y) \in S_r$).
We expand this for $r$ fixed near $r_0$ to get a local 
parameterization of the projection:
\begin{align*}
\pi_r\big( x(t), y(t) \big)
& = \frac{x(t)}{r + y(t)}
= \frac{x_0 + \alpha t + \gamma t^2 + O(t^3)}{r + y_0 + \beta t + \delta t^2 + O(t^3)} \\
& = \frac{x_0}{r+y_0} 
+ \frac{\alpha(r+y_0)-\beta x_0}{(r+y_0)^2} t 
+ c_r t^2  + \cdots
\end{align*}

But $(x_0,y_0)$ is assumed to be a critical point of the projection $\pi_{r_0}$
so by Lemma \ref{lem:critproj}, $x_0 \frac{\partial f}{\partial x}(x_0,y_0) + 
(r_0+y_0)\frac{\partial f}{\partial y}(x_0,y_0) = 0$, i.e.
$\alpha(r_0+y_0)-\beta x_0=0$.
Then for $r=r_0$:
$$\pi_{r_0}\big( x(t), y(t) \big)
= \frac{x_0}{r_0+y_0} 
+ c_{r_0} t^2  + \cdots
$$
where $c_{r_0} = \frac{\gamma(r_0+y_0)-\delta x_0}{(r_0+y_0)^2}$.

We suppose that the projection is non degenerate, that is,
\begin{equation}\label{eq:c0nonzero}
c_{r_0}\neq 0\ \iff\ \gamma(r_0+y_0)-\delta x_0\neq 0,
\end{equation}
see Section~\ref{sec:afewwords} for more details.

This condition implies that $c_r\neq 0$ for $r$ sufficiently close to $r_0$.
The projection is then given by a map $t \mapsto a_r + 
b_rt+c_rt^2$ where $b_r$ is small (because $r\sim r_0$),
so that up to a translation and homothety we model the projection $\pi_r$ by the map 
$t \mapsto t^2$.

\bigskip

\textbf{Step 4.} \emph{Effect of the projection.}

\smallskip

Remember that in Step 2 we found the geometry for the local behaviour of the parameters $t$: a family of ellipses or hyperbolas
defined by \eqref{eq:conics}. Looking at local parametrization of these conics near the point $t=(0,0)\in\Cc$, we infer that the
family of these conics is locally given (up to homothety)
by $(\kappa - s^2,s)$; 
$\kappa$ is for the real parameter of the family of links and depends on $r$; $s$ is a real parameter for 
the parameterization of the conic, see Figure \ref{fig:familyconics}.

The local model of the projection $t \mapsto t^2$, calculated in Step~3,
sends the local model of the parametrization of the link $L_r$,
 to $( (\kappa-s^2)^2 - s^2, 2\kappa s-2s^3)\in\Cc$.
When $\kappa$ goes from negative values to positive ones, it corresponds to 
$\reid_1^{\uparrow}$ (see Figure \ref{fig:projreid1}). 
But the sign of $\kappa$ does not have to be equal to the sign of $r-r_0$. In fact, the sign of $\kappa$
depends on whether changing from $r<r_0$ to $r>r_0$ induces a move of the conic in the family to the right or to the left;
see the discussion at the end of Step 2. We get:
\begin{itemize}
  \item case $\mathcal{E}$: the sign of $\kappa$ is the sign of $r-r_0$; so that the 
move is $\reid_1^{\uparrow}$ when $r$ grows;
  \item case $\mathcal{H}$: the sign of $\kappa$ is opposite to the sign of $r-r_0$; 
so that the move is $\reid_1^{\downarrow}$ when $r$ grows.
\end{itemize}

  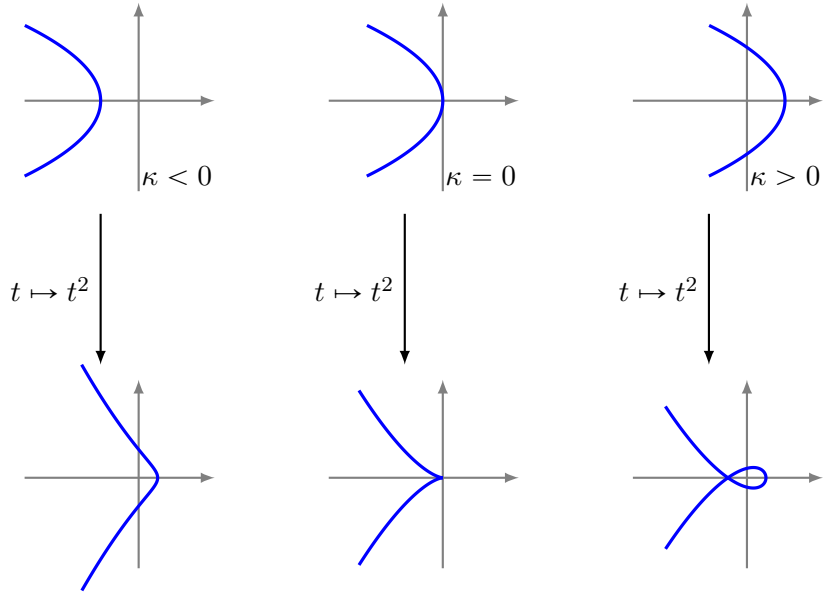
\begin{figure}[h]
    \begin{tikzpicture}[scale=1]

\begin{scope}[xshift=-4cm]

     \draw[->,>=latex,thick, gray] (-1.5,0)--(1,0); 
     \draw[->,>=latex,thick, gray] (0,-1.2)--(0,1.3); 
    \node at (0.5,-1) {$\kappa<0$};
  \def\myc{-0.5}
   \draw [very thick, color=blue,samples=100,smooth, domain=-1:1]  plot ({\myc - \x*\x}, {\x} );

\draw[->,>=latex,thick] (-0.5,-1.5)--(-0.5,-3.5) node[midway,left]{$t\mapsto t^ 2$};

\begin{scope}[yshift=-5cm]
     \draw[->,>=latex,thick, gray] (-1.5,0)--(1,0); 
     \draw[->,>=latex,thick, gray] (0,-1.2)--(0,1.3); 
   \draw [very thick, color=blue,samples=100,smooth, domain=-1:1]  plot ({\myc*\myc - \x*\x}, {\myc*\x-\x*\x*\x} );
\end{scope}

\end{scope}

\begin{scope}[xshift=0cm]

     \draw[->,>=latex,thick, gray] (-1.5,0)--(1,0); 
     \draw[->,>=latex,thick, gray] (0,-1.2)--(0,1.3); 
    \node at (0.5,-1) {$\kappa=0$};
  \def\myc{0}
   \draw [very thick, color=blue,samples=100,smooth, domain=-1:1]  plot ({\myc - \x*\x}, {\x} );

\draw[->,>=latex,thick] (-0.5,-1.5)--(-0.5,-3.5) node[midway,left]{$t\mapsto t^ 2$};

\begin{scope}[yshift=-5cm]
     \draw[->,>=latex,thick, gray] (-1.5,0)--(1,0); 
     \draw[->,>=latex,thick, gray] (0,-1.2)--(0,1.3); 
   \draw [very thick, color=blue,samples=100,smooth, domain=-1.05:1.05]  plot ({\myc*\myc - \x*\x}, {\myc*\x-\x*\x*\x} );
\end{scope}

\end{scope}

\begin{scope}[xshift=4cm]

     \draw[->,>=latex,thick, gray] (-1.5,0)--(1,0); 
     \draw[->,>=latex,thick, gray] (0,-1.2)--(0,1.3); 
    \node at (0.5,-1) {$\kappa>0$};
  \def\myc{0.5}
   \draw [very thick, color=blue,samples=100,smooth, domain=-1:1]  plot ({\myc - \x*\x}, {\x} );

\draw[->,>=latex,thick] (-0.5,-1.5)--(-0.5,-3.5) node[midway,left]{$t\mapsto t^ 2$};

\begin{scope}[yshift=-5cm]
     \draw[->,>=latex,thick, gray] (-1.5,0)--(1,0); 
     \draw[->,>=latex,thick, gray] (0,-1.2)--(0,1.3); 
   \draw [very thick, color=blue,samples=100,smooth, domain=-1.15:1.15]  plot ({\myc*\myc - \x*\x}, {\myc*\x-\x*\x*\x} );
\end{scope}

\end{scope}

\end{tikzpicture}
    \caption{The Reidemeister 1 move. Top: the parameters $t$; bottom: local behavior 
    of the link diagram.\label{fig:projreid1}}
  \end{figure}

\bigskip
\textbf{Step 5.} \emph{The type $\boxplus$/$\boxminus$ of crossings.}

We want to determine the type of the crossing $\boxplus$/$\boxminus$
for the Reidemeister move $\reid_1$, that is to say we need to determine which 
branch is above the other.
The type of crossing is determined by the last coordinate of the map 
$\Psi_r(x,y) =
\left(\frac{x}{r+y}, \frac{+r \Im(y)}{|r+y|^2}\right)$.
We define two points $t_{+}$, $t_{-}$ of the set of parameters of the link 
$L_r$ that are sent to the double point by the map $\pi_r$. 

For simplicity we will make the computation at the point $(x_0,y_0)=(1,0) \in S_1$, with $r_0=1$.
By Lemma~\ref{lem:critproj}, as we supposed
$(x_0,y_0)$ is a critical point, we have 
$x_0 \frac{\partial f}{\partial x}(x_0,y_0) + (r_0+y_0)\frac{\partial 
f}{\partial y}(x_0,y_0) = 0$,
i.e. $\alpha(r_0+y_0)-\beta x_0=0$.
So for our point $(x_0,y_0) = (1,0)$ we get $\alpha=\beta$.
Moreover $\omega = \ol{x_0}\alpha+\ol{y_0}\beta = \alpha = \beta$.

For our model of projection $t\mapsto t^2$ and our model of conics, 
we have $t_-=-t_+$.
We express these points $t_-=-t_+$ with respect to $\omega$:
$$t_+ = + \ii \epsilon \ol \omega, \qquad 
t_- = - \ii \epsilon \ol \omega,$$
where $\epsilon$ depends on $r-r_0$. The sign of $\epsilon$ depends on the cases:
\begin{itemize}
  \item case $\mathcal{E}$: $\epsilon>0$;
  \item case $\mathcal{H}$: $\epsilon <0$.
\end{itemize}

  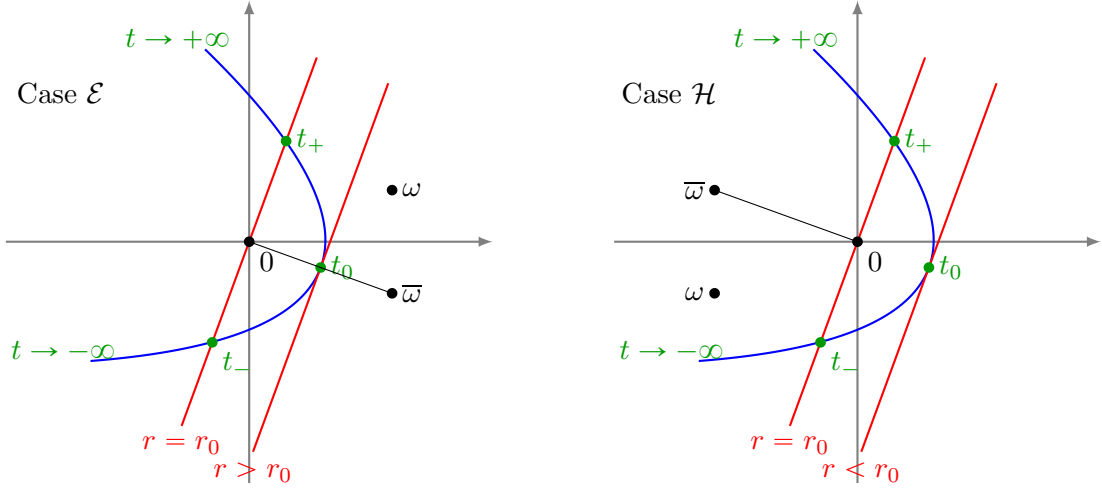
\begin{figure}[h]
    \begin{tikzpicture}[scale=2]

     \draw[->,>=latex,thick, gray] (-1.6,0)--(1.6,0); 
     \draw[->,>=latex,thick, gray] (0,-1.6)--(0,1.6); 

\begin{scope}[rotate=-20]

%
  \def\myc{0}
   \draw[red,thick] (\myc,-1.3)--(\myc,1.3) node[pos=0, below] {$r=r_0$} ;

  \def\myc{0.5}
  \draw [thick, color=blue,samples=100,smooth, domain=-1.1:1.1]  plot ({\myc - \x*\x}, {\x} ) ;

  \draw[red,thick] (\myc,-1.3)--(\myc,1.3) node[pos=0, below] {$r>r_0$};

  \node[green!60!black] at (-0.9,1.1) {$t \to {+\infty}$};
  \node[green!60!black] at (-0.9,-1.1) {$t \to {-\infty}$};
  \fill[green!60!black] (0,0.71) circle (1pt) node[right] {$t_{+}$};
  \fill[green!60!black] (0,-0.71) circle (1pt) node[below right] {$t_{-}$};
  \fill[green!60!black] (\myc,0) circle (1pt) node[right] {$t_{0}$};

    \fill (0,0) circle (1pt) node[below right] {$0$};
    \fill (1,0) circle (1pt) node[right] {$\ol\omega$};
    \fill (40:1) circle (1pt) node[right] {$\omega$};
    \draw (0,0)--(1,0);

  \node at (-1.5,0.5) {Case $\mathcal{E}$};
\end{scope}

\begin{scope}[xshift=4cm]
     \draw[->,>=latex,thick, gray] (-1.6,0)--(1.6,0); 
     \draw[->,>=latex,thick, gray] (0,-1.6)--(0,1.6); 

\begin{scope}[rotate=-20]

%
  \def\myc{0}
   \draw[red,thick] (\myc,-1.3)--(\myc,1.3) node[pos=0, below] {$r=r_0$} ;

  \def\myc{0.5}
  \draw [thick, color=blue,samples=100,smooth, domain=-1.1:1.1]  plot ({\myc - \x*\x}, {\x} ) ;

  \draw[red,thick] (\myc,-1.3)--(\myc,1.3) node[pos=0, below] {$r<r_0$}  ;

  \node[green!60!black] at (-0.9,1.1) {$t \to {+\infty}$};
  \node[green!60!black] at (-0.9,-1.1) {$t \to {-\infty}$};
  \fill[green!60!black] (0,0.71) circle (1pt) node[right] {$t_{+}$};
  \fill[green!60!black] (0,-0.71) circle (1pt) node[below right] {$t_{-}$};
  \fill[green!60!black] (\myc,0) circle (1pt) node[right] {$t_{0}$};

    \fill (0,0) circle (1pt) node[below right] {$0$};
    \fill (-1,0) circle (1pt) node[left] {$\ol\omega$};
    \fill (40:-1) circle (1pt) node[left] {$\omega$};

    \draw (0,0)--(-1,0);    
   \node at (-1.5,0.5) {Case $\mathcal{H}$};
\end{scope}
\end{scope}

\end{tikzpicture}
    \caption{The parameters $t_+$, $t_-$.}
  \end{figure}

We compute for $y(t)= y_0 + \beta t + \delta t^2 + \cdots$, the sign of the last coordinate of $\Psi_r$, denoted here by $Z$:
\begin{align*}
\sgn(Z(t_+)) &= \sgn \left(\frac{+r \Im(y(t_+))}{|r+y(t_+)|^2}\right) 
= \sgn \left( \Im(y(t_+)) \right)\\
&= \sgn \left( \Im(y_0 + \beta \cdot \ii \epsilon \ol\omega + \delta (\ii \epsilon \ol\omega)^2 + \cdots ) \right).
\end{align*}
Since $y_0=0$ and $\beta\ol\omega = |\beta|^2$ and $\epsilon$ is small, we get
\begin{align*}
\sgn(Z(t_+))& = \sgn(\epsilon).\\
\intertext{In a similar way:}
\sgn(Z(t_-))& = -\sgn(\epsilon).
\end{align*}
So that:
\begin{itemize}
  \item case $\mathcal{E}$ ($\epsilon>0$): $Z(t_+)>Z(t_-)$, the crossing 
is of type $\boxplus$\,;
  \item case $\mathcal{H}$ ($\epsilon<0$): $Z(t_+)<Z(t_-)$, the crossing 
is of type $\boxminus$\,.
\end{itemize}

Pictures for case $\mathcal{E}$ and case $\mathcal{H}$ are in Figures~\ref{fig:Ecase}.
  \begin{figure}[h]
    \begin{tikzpicture}[scale=1.2]

\begin{scope}[xshift=0cm]

     \draw[->,>=latex,thick, gray] (-1.6,0)--(1.6,0); 
     \draw[->,>=latex,thick, gray] (0,-1.6)--(0,1.6); 

  \def\myc{0.5}
  \draw [very thick, color=blue,samples=100,smooth, domain=-1.1:1.1]  plot ({\myc - \x*\x}, {\x} ) ;

  \node[green!60!black] at (-0.9,1.3) {$t \to {+\infty}$};
  \node[green!60!black] at (-0.9,-1.3) {$t \to {-\infty}$};
  \fill[green!60!black] (0,0.71) circle (1pt) node[above right] {$t_{+}$};
  \fill[green!60!black] (0,-0.71) circle (1pt) node[below right] {$t_{-}$};
  \fill[green!60!black] (\myc,0) circle (1pt) node[below right] {$t_{0}$};

    \fill (0,0) circle (1pt) node[below right] {$0$};

  \node at (-1.5,0.5) {Case $\mathcal{E}$};

\draw[->,>=latex,thick] (-0.5,-1.5)--(-0.5,-3.5) node[midway,left]{$t\mapsto t^ 2$};

\begin{scope}[yshift=-5cm]
     \draw[->,>=latex,thick, gray] (-1.5,0)--(1.6,0); 
     \draw[->,>=latex,thick, gray] (0,-1.6)--(0,1.6); 
  \def\myc{0.7}

   \draw [very thick, color=blue,samples=100,smooth, domain=-1.2:-0.9]  plot ({\myc*\myc - \x*\x}, {\myc*\x-\x*\x*\x} );
    \draw [very thick, color=blue,samples=100,smooth, domain=-0.75:1.2]  plot ({\myc*\myc - \x*\x}, {\myc*\x-\x*\x*\x} );

  \node[green!60!black] at (-0.9,1.1) {$(t \to {-\infty})^2$};
  \node[green!60!black] at (-0.9,-1.1) {$(t \to {+\infty})^2$};
  \fill[green!60!black] (-0.21,0) circle (1.5pt) node (wr1) {};
  \node[red, below=5pt] at (-0.21,0) {$\boxplus$};
  \node[green!60!black] (wr2) at (1,1) {$t_+^2=t_-^2$};
  \draw[dashed, green!20!black,->] (wr2) .. controls +(left:5mm) and +(up:5mm).. (wr1);
  \fill[green!60!black] (\myc*\myc,0) circle (1pt) node[below right] {$t_{0}^2$};

\end{scope}

\end{scope}

\begin{scope}[xshift=5cm]

     \draw[->,>=latex,thick, gray] (-1.6,0)--(1.6,0); 
     \draw[->,>=latex,thick, gray] (0,-1.6)--(0,1.6); 

  \def\myc{0.5}
  \draw [very thick, color=blue,samples=100,smooth, domain=-1.1:1.1]  plot ({\myc - \x*\x}, {\x} ) ;

  \node[green!60!black] at (-0.9,1.3) {$t \to {+\infty}$};
  \node[green!60!black] at (-0.9,-1.3) {$t \to {-\infty}$};
  \fill[green!60!black] (0,0.71) circle (1pt) node[above right] {$t_{+}$};
  \fill[green!60!black] (0,-0.71) circle (1pt) node[below right] {$t_{-}$};
  \fill[green!60!black] (\myc,0) circle (1pt) node[below right] {$t_{0}$};

    \fill (0,0) circle (1pt) node[below right] {$0$};

  \node at (-1.5,0.5) {Case $\mathcal{H}$};

\draw[->,>=latex,thick] (-0.5,-1.5)--(-0.5,-3.5) node[midway,left]{$t\mapsto t^ 2$};

\begin{scope}[yshift=-5cm]
     \draw[->,>=latex,thick, gray] (-1.5,0)--(1.6,0); 
     \draw[->,>=latex,thick, gray] (0,-1.6)--(0,1.6); 
  \def\myc{0.7}

   \draw [very thick, color=blue,samples=100,smooth, domain=-1.2:+0.75]  plot ({\myc*\myc - \x*\x}, {\myc*\x-\x*\x*\x} );
    \draw [very thick, color=blue,samples=100,smooth, domain=0.9:1.2]  plot ({\myc*\myc - \x*\x}, {\myc*\x-\x*\x*\x} );

  \node[green!60!black] at (-0.9,1.1) {$(t \to {-\infty})^2$};
  \node[green!60!black] at (-0.9,-1.1) {$(t \to {+\infty})^2$};
  \fill[green!60!black] (-0.21,0) circle (1.5pt) node (wrr1) {};
  \node[green!60!black] (wrr2) at (1,1) {$t_+^2=t_-^2$};
  \draw[dashed, green!20!black,->] (wrr2) .. controls +(left:5mm) and +(up:5mm).. (wrr1);
\node[red, below=5pt] at (-0.21,0){$\boxminus$};
  \fill[green!60!black] (\myc*\myc,0) circle (1pt) node[below right] {$t_{0}^2$};

\end{scope}

\end{scope}

\end{tikzpicture}
    \caption{The parameters $t_+$, $t_-$ and their projections.}\label{fig:Ecase}
  \end{figure}
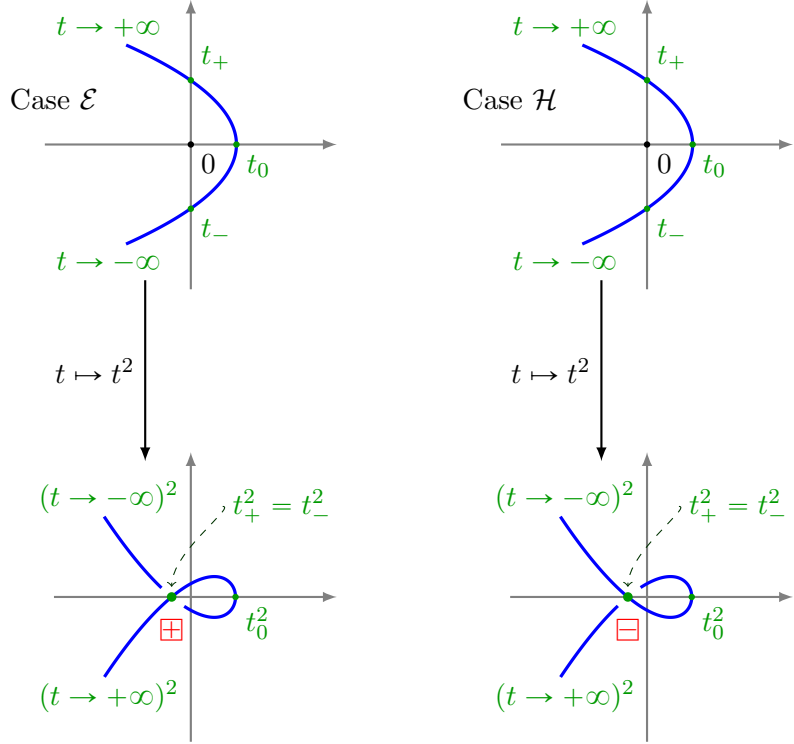

\bigskip
\textbf{Step 6.} \emph{The type $\oplus$/$\ominus$ of loops.}

To compute orientation  $\oplus$/$\ominus$ of the loop
of the move $\reid_1$, we choose a positively oriented tangent vector $v$ 
to the link $L_r$, and compute its image by the projection $\pi_r$.

Let $v_0=(x'(t),y'(t))$ be a tangent vector to $L_r$
(this a vector collinear to the vector
$\left(-\frac{\partial f}{\partial y} (z),
  \frac{\partial f}{\partial x} (z)\right)$).
We define 
  $$v = +\ii \; \overline{\langle v_0 | z \rangle_\Cc} \cdot v_0,$$
  which is a tangent vector to $L_r$ at $z=(x(t),y(t))$, oriented positively, 
with the orientation
  on $L_r$ induced by $L_r$ being the boundary of $\mathcal{C}\cap B_r$.
  
Recall that the diagram of the link is obtain by the projection $\pi_r$ 
corresponding to the 
first (complex) component of the map $\Psi_r$ so that
$\pi_r(x,y) = \frac{x}{r+y}$.
The differential of $\pi_r$ is
$$\dd \pi_r(z)(v_x,v_y) = \frac{1}{r+y}v_x - \frac{x}{(r+y)^2} v_y.$$

To determine the orientation of the loop we first need to compute the projection 
of the tangent vector at one point. We choose the point whose parameter is 
$t_0 = +\epsilon \ol\omega$, where the sign of $\epsilon$ depends on our 
cases:
\begin{itemize}
  \item case $\mathcal{E}$: $\epsilon>0$;
  \item case $\mathcal{H}$: $\epsilon<0$.
\end{itemize}

  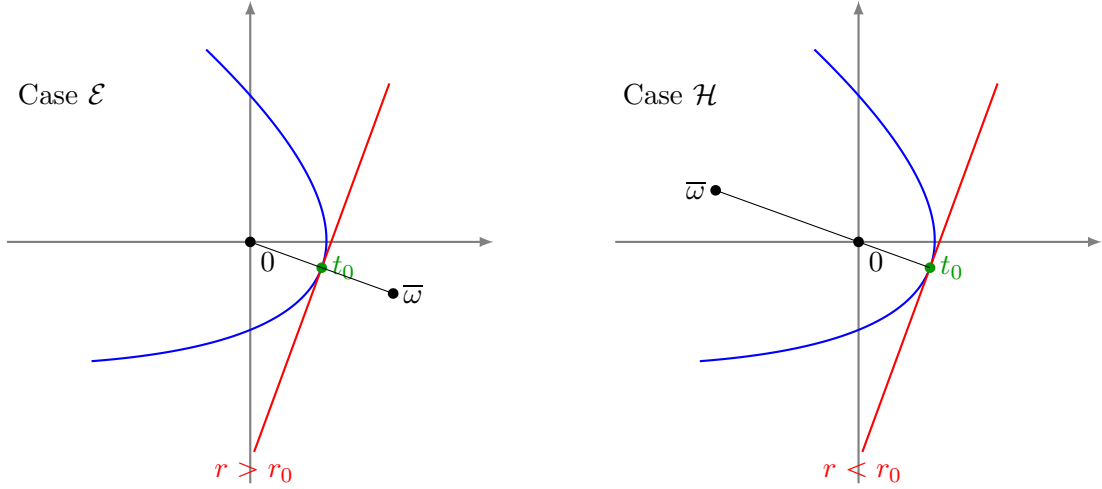
\begin{figure}[h]
    \begin{tikzpicture}[scale=2]

     \draw[->,>=latex,thick, gray] (-1.6,0)--(1.6,0); 
     \draw[->,>=latex,thick, gray] (0,-1.6)--(0,1.6); 

\begin{scope}[rotate=-20]

  \def\myc{0.5}
  \draw [thick, color=blue,samples=100,smooth, domain=-1.1:1.1]  plot ({\myc - \x*\x}, {\x} ) ;

  \draw[red,thick] (\myc,-1.3)--(\myc,1.3) node[pos=0, below] {$r>r_0$};

  \fill[green!60!black] (\myc,0) circle (1pt) node[right] {$t_{0}$};

    \fill (0,0) circle (1pt) node[below right] {$0$};
    \fill (1,0) circle (1pt) node[right] {$\ol\omega$};
    \draw (0,0)--(1,0);

  \node at (-1.5,0.5) {Case $\mathcal{E}$};
\end{scope}

\begin{scope}[xshift=4cm]
     \draw[->,>=latex,thick, gray] (-1.6,0)--(1.6,0); 
     \draw[->,>=latex,thick, gray] (0,-1.6)--(0,1.6); 

\begin{scope}[rotate=-20]

  \def\myc{0.5}
  \draw [thick, color=blue,samples=100,smooth, domain=-1.1:1.1]  plot ({\myc - \x*\x}, {\x} );

  \draw[red,thick] (\myc,-1.3)--(\myc,1.3) node[pos=0, below] {$r<r_0$}  ;

  \fill[green!60!black] (\myc,0) circle (1pt) node[right] {$t_{0}$};

    \fill (0,0) circle (1pt) node[below right] {$0$};
    \fill (-1,0) circle (1pt) node[left] {$\ol\omega$};

    \draw (0.5,0)--(-1,0);    
   \node at (-1.5,0.5) {Case $\mathcal{H}$};
\end{scope}
\end{scope}

\end{tikzpicture}
    \caption{The point $t_{0}$.}
  \end{figure}

For this $t_0 = \epsilon \ol \omega$, we have
$$\left\{\begin{array}{rcl}
x(t_0) &=& x_0 + \alpha t_0 + \gamma t_0^2 + \cdots = x_0 + \epsilon\alpha\ol\omega + \cdots \\
y(t_0) &=& y_0 + \beta t_0 + \delta t_0^2 + \cdots  = y_0 + \epsilon\beta\ol\omega + \cdots \\
\end{array}\right.$$
where the dots are for higher order terms (with respect to $\epsilon$).
So that
$$r = \sqrt{|x(t_0)|^2+|y(t_0)|^2}
= r_0 + \epsilon \frac{|\omega|^2}{r_0} + \cdots.$$
Moreover:
$$\left\{\begin{array}{rcl}
x'(t_0) &=& \alpha + 2\gamma t_0 + \cdots = \alpha + 2\epsilon \gamma\ol\omega + \cdots\\
y'(t_0) &=& \beta + 2\delta t_0 + \cdots = \beta + 2\epsilon \delta\ol\omega + \cdots\\ 
\end{array}\right.$$
Now $v_0=\begin{pmatrix}x'(t_0)\\y'(t_0)\end{pmatrix}$ and
$$v = +\ii \; \overline{\langle v_0 | z \rangle_\Cc} \cdot v_0
 =  +\ii (\ol\omega + \cdots)
\begin{pmatrix} 
\alpha + 2\epsilon\gamma{\ol\omega}+\cdots\\ 
\beta  + 2\epsilon\delta{\ol\omega}+\cdots\\ 
\end{pmatrix}.$$
Applying the differential, with $v = (v_x,v_y)$ and $z_0 = (x(t_0),y(t_0))$ we get:
\begin{align*}
\dd \pi_r(z_0)(v) 
  &= \frac{1}{(r+y)^2}\left((r+y)v_x - xv_y\right) \\
  &=  \ii \frac{\ol\omega+\cdots}{(r+y)^2}
  \Big(
  (r_0 + \epsilon \frac{|\omega|^2}{r_0} + y_0 + \epsilon\beta\ol\omega + \cdots)(\alpha + 2\epsilon\gamma{\ol\omega}+\cdots)\\\
  & \qquad 
  -(x_0 + \epsilon\alpha\ol\omega + \cdots)(\beta  + 2\epsilon\delta{\ol\omega}+\cdots)
  \Big)\\
  &=  \ii \epsilon \frac{{\ol\omega}^2}{(r_0+y_0)^2}
  \left( \frac{\alpha\omega}{r_0} + 2 \big( (r_0+y_0)\gamma - x_0\delta\big) \right) + \cdots\\
\end{align*}
where we used  $\alpha(r_0+y_0)-\beta x_0=0$.

The direction of the vector depends on the sign of $\epsilon$.
To determine the orientation of the loop in the 
diagram $D_r$, we compute a normal outward vector to the diagram at $\pi_r(z_0)$.
Remember that $t_0 = +\epsilon\ol\omega$, and $z_0 = ( x(t_0),y(t_0) )$.
We denote the projection $\pi_r(\epsilon) = \pi_r(x(\epsilon\ol\omega),y(\epsilon\ol\omega))$ as a function of $\epsilon$.
An inner point of the loop, is $\pi_r(\epsilon(1-\eta))$, for some $0 < \eta \ll 1$.
An outward vector $w':=\overrightarrow{\pi_r(\epsilon-\epsilon\eta)\pi_r(\epsilon)}$ is
given by the complex number $\pi_r(\epsilon) - \pi_r(\epsilon-\epsilon\eta)$.
But
$$\frac{\dd \pi_r(\epsilon)}{\dd \epsilon} (\epsilon) 
= \lim_{\eta'\to 0} \frac{\pi_r(\epsilon) - \pi_r(\epsilon-\eta')}{\eta'}$$
We see that up to higher order terms in $\epsilon$, the vector $w'$ is positively linearly equivalent to
$w = \epsilon \frac{\dd \pi_r(\epsilon)}{\dd \epsilon} (\epsilon)$.

  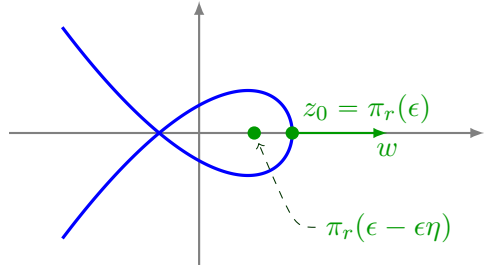
\begin{figure}[h]
    \begin{tikzpicture}[scale=2.5]

     \draw[->,>=latex,thick, gray] (-1,0)--(1.5,0); 
     \draw[->,>=latex,thick, gray] (0,-0.7)--(0,0.7); 
  \def\myc{0.7}

   \draw [very thick, color=blue,samples=100,smooth, domain=-1.1:1.1]  plot ({\myc*\myc - \x*\x}, {\myc*\x-\x*\x*\x} );

  \fill[green!60!black] (\myc*\myc-0.2,0) circle (1pt) node(mycircle) {};
  \node[green!60!black] (mycircle2) at (1,-0.5) {$\pi_r(\epsilon-\epsilon\eta)$};
  \draw[dashed,green!20!black,->] (mycircle2) ..controls +(left:5mm) .. (mycircle);
  \fill[green!60!black] (\myc*\myc,0) circle (1pt) node[above right] {$z_0=\pi_r(\epsilon)$};
  \draw [->,>=latex,thick, green!60!black] (\myc*\myc,0)--+(0.5,0) node[below]{$w$};

\end{tikzpicture}
    \caption{The outward vector $w$.}
  \end{figure}

Now
$$\pi_r(\epsilon)
= \pi_r(z_0)
= \frac{x_0+\alpha t_0+\gamma t_0^2+ \cdots}{r+y_0 + \beta t_0 + \delta t_0^2 + \cdots}
= \frac{x_0+\epsilon\alpha\ol\omega +\epsilon^2\gamma{\ol\omega}^2+ \cdots}{r+y_0 + \epsilon\beta\ol\omega
+\epsilon^2 \delta {\ol\omega}^2 + \cdots}$$

For the fixed radius $r$ and using again the relation  $\alpha(r_0+y_0)-\beta x_0=0$, we get :
$$w =  \epsilon \frac{\dd \pi_r(\epsilon)}{\dd \epsilon} (\epsilon) 
= \epsilon^2 \frac{{\ol\omega}^2}{(r_0+y_0)^2}
  \left( \frac{\alpha\omega}{r_0} + 2 \big( (r_0+y_0)\gamma - x_0\delta\big) \right) + \cdots$$
So that the lowest order term (with respect to $\epsilon$) of $v$ equals to the lowest order term of $\epsilon \ii w$.

In case $\mathcal{E}$, $\epsilon>0$, $(w,v)$ is a positive basis, and the loop that appears 
in the move $\reid_1^{\uparrow}$ is positively oriented; see Figure~\ref{fig:pic23} (left side). 
In case $\mathcal{H}$, $\epsilon<0$, $(w,v)$ is a negative basis and the loop that 
disappears in the move $\reid_1^{\downarrow}$ is negatively oriented; see Figure~\ref{fig:pic23} (right side).

  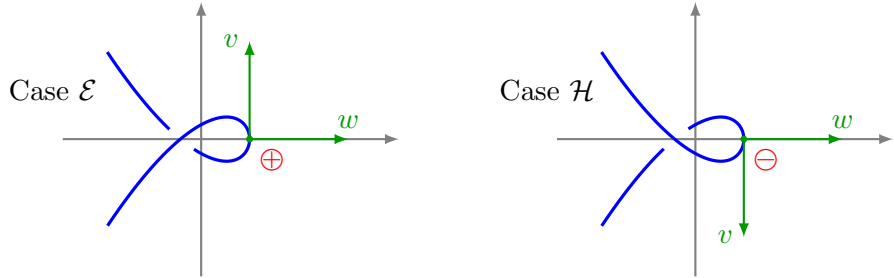
\begin{figure}[h]
    \begin{tikzpicture}[scale=1.3]

\begin{scope}[xshift=0cm]
   \node at (-1.5,0.5) {Case $\mathcal{E}$};
     \draw[->,>=latex,thick, gray] (-1.4,0)--(2,0); 
     \draw[->,>=latex,thick, gray] (0,-1.4)--(0,1.4); 
  \def\myc{0.7}

   \draw [very thick, color=blue,samples=100,smooth, domain=-1.2:-0.9]  plot ({\myc*\myc - \x*\x}, {\myc*\x-\x*\x*\x} );
    \draw [very thick, color=blue,samples=100,smooth, domain=-0.75:1.2]  plot ({\myc*\myc - \x*\x}, {\myc*\x-\x*\x*\x} );

  \fill[green!60!black] (\myc*\myc,0) circle (1pt); 
  \draw [->,>=latex,thick, green!60!black] (\myc*\myc,0)--+(0,1)
node[pos=0, below right, red]{$\oplus$} node[left]{$v$};
  \draw [->,>=latex,thick, green!60!black] (\myc*\myc,0)--+(1,0) node[above]{$w$};
\end{scope}

\begin{scope}[xshift=5cm]
   \node at (-1.5,0.5) {Case $\mathcal{H}$};

     \draw[->,>=latex,thick, gray] (-1.4,0)--(2,0); 
     \draw[->,>=latex,thick, gray] (0,-1.4)--(0,1.4); 
  \def\myc{0.7}

   \draw [very thick, color=blue,samples=100,smooth, domain=-1.2:+0.75]  plot ({\myc*\myc - \x*\x}, {\myc*\x-\x*\x*\x} );
    \draw [very thick, color=blue,samples=100,smooth, domain=0.9:1.2]  plot ({\myc*\myc - \x*\x}, {\myc*\x-\x*\x*\x} );

  \fill[green!60!black] (\myc*\myc,0) circle (1pt); 
  \draw [->,>=latex,thick, green!60!black] (\myc*\myc,0)--+(0,-1)
node[pos=0, below right, red]{$\ominus$} node[left]{$v$};
  \draw [->,>=latex,thick, green!60!black] (\myc*\myc,0)--+(1,0) node[above]{$w$};
\end{scope}

\end{tikzpicture}
    \caption{The orientation $\oplus$/$\ominus$ of the loop.}\label{fig:pic23}
  \end{figure}
  
\bigskip
\textbf{Step 7.} \emph{Conclusion.}

We have separated our studies in two cases:
\begin{itemize}
  \item Case $\mathcal{E}$, $\epsilon>0$. Then when $r$ goes to $r_0-\epsilon$ to $r_0+\epsilon$ 
  the move for the diagram of the link $L_r$ is $\reid_1^{\uparrow\boxplus\oplus}$.
  \item Case $\mathcal{H}$, $\epsilon<0$. Then the move is $\reid_1^{\downarrow\boxminus\ominus}$.
\end{itemize}

\end{proof}

\subsection{A few words on genericity of $\mathcal{C}$}\label{sec:afewwords}

Throughout the paper we were assuming that $\mathcal{C}$ is generic. Here we resume all the genericity conditions that were used. We 
refer to the book of Arnold \cite{Arn} for a thorough treatment of genericity condition.

We begin with the standard one.
\begin{condition}\label{con:milnorgenericity}
The distance function $(x,y)\mapsto |x|^2+|y|^2$ on $\Cc^2$ restricts to a Morse function on $\mathcal{C}$.
\end{condition}
This condition is shown to be generic in \cite[Theorem 6.6]{Mi-Morse}. In fact, by translating the coordinate center by a
generic vector of arbitrary small length (or translating $\mathcal{C}$ in the opposite direction)
we can guarantee Condition~\ref{con:milnorgenericity}. For fixed $r$, if the translation
vector is very small, the translation does not change the isotopy type of $D_r$ (as long as $D_r$ has only double points as singularities and
$\mathcal{C}$ intersects $S_r$ transversally).

Notice that Condition~\ref{con:milnorgenericity} is used in Section~\ref{sec:preliminaries}. It is equivalent to
saying that $\det H\neq 0$ at the intersection points of $\mathcal{C}$ with $\{Jf(x,y)=0\}$.

We have another condition that is easy to satisfy.
\begin{condition}\label{con:almosttrivial}
The curves $\mathcal{C}$, $\{Jf(x,y)=0\}$ and $\{x\frac{\partial f}{\partial x}+((|x|^2+|y|^2)^{1/2}+y)\frac{\partial f}{\partial y}=0\}$ do
not have any common intersection point.
\end{condition}
\begin{lemma}\label{lem:perturbing}
Condition~\ref{con:almosttrivial} can be guaranteed by perturbing $f$ by a linear term.
\end{lemma}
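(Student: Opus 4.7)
The plan is a standard parametrized transversality (Sard) argument. I introduce the three-complex-parameter family
$$f_\lambda(x,y)=f(x,y)+\lambda_1 x+\lambda_2 y+\lambda_3,\qquad \lambda=(\lambda_1,\lambda_2,\lambda_3)\in\Cc^3,$$
and show that for every $\lambda$ in the complement of a set of Lebesgue measure zero in $\Cc^3$ (in particular for arbitrarily small $\lambda$), the three loci $\mathcal{C}_\lambda=\{f_\lambda=0\}$, $\{Jf_\lambda(x,y)=0\}$ and $\bigl\{x\frac{\partial f_\lambda}{\partial x}+((|x|^2+|y|^2)^{1/2}+y)\frac{\partial f_\lambda}{\partial y}=0\bigr\}$ have no common point.

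The key computational step is to consider the map $F\colon \Cc^2\times\Cc^3\to\Cc^3$ defined by
$$F(x,y,\lambda)=\left(f_\lambda(x,y),\ Jf_\lambda(x,y),\ x\frac{\partial f_\lambda}{\partial x}+(r+y)\frac{\partial f_\lambda}{\partial y}\right),$$
where $r=(|x|^2+|y|^2)^{1/2}$, so that the triple intersection for the parameter $\lambda$ is exactly the fibre $F_\lambda^{-1}(0)\subset\Cc^2$. Since $F$ is affine in $\lambda$, its derivative in $\lambda$ is the constant complex $3\times 3$ matrix
$$M=\begin{pmatrix} x & y & 1 \\ \overline{y} & -\overline{x} & 0 \\ x & r+y & 0 \end{pmatrix}.$$
Expanding along the third column and using $|x|^2+|y|^2=r^2$ gives
$$\det M=\overline{y}(r+y)+|x|^2=r^2+r\overline{y}=r(r+\overline{y}).$$
Hence $\det M=0$ only when $r=0$, i.e.\ $(x,y)=(0,0)$, or when $\overline{y}=-r$, which forces $y$ to be a non-positive real with $|y|=r$ and therefore $x=0$, so $(x,y)=(0,-t)$ for some $t\ge 0$. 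Both loci are already excluded: $0\notin\mathcal{C}$ by a generic translation, and the half-line $\{(0,-t):t\ge 0\}$ has been assumed disjoint from $\mathcal{C}$ in Section~\ref{sec:stereo}.

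It follows that at every point of $F^{-1}(0)$ the derivative $\partial_\lambda F$ is a complex-linear isomorphism, so $F$ is a submersion along $F^{-1}(0)$. Therefore $F^{-1}(0)$ is a smooth real submanifold of $\Cc^2\times\Cc^3\cong\Rr^{10}$ of dimension $10-6=4$. The projection $\pi\colon F^{-1}(0)\to\Cc^3\cong\Rr^6$ maps a $4$-manifold into a $6$-manifold, so its image has Lebesgue measure zero by Sard's theorem; for every $\lambda$ outside this null set the fibre $F_\lambda^{-1}(0)$ is empty, which is exactly Condition~\ref{con:almosttrivial} for $f_\lambda$. Taking $\|\lambda\|$ arbitrarily small gives the required perturbation. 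The only genuinely delicate point is the determinant computation together with the identification of the vanishing locus of $\det M$ with the loci already discarded; once this is checked, the parametrized Sard's theorem finishes the proof mechanically.
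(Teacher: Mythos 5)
Your strategy (parametrized transversality over the full three-parameter affine family, followed by a mini-Sard argument) is genuinely different from the paper's proof, which is more elementary and proceeds in two steps: first a generic term $ax+by$ is added so that the two loci $\{Jf=0\}$ and $\{xf_x+(r+y)f_y=0\}$ meet in only finitely many points, and then a constant $\varepsilon$ is added to move $f$ away from the finitely many values it takes at those points, the key observation being that adding a constant changes neither of the two derivative conditions. Your determinant computation $\det M=r(r+\overline{y})$ is correct, and the overall plan is viable.

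However, there is a genuine gap in the step where you assert that $\partial_\lambda F$ is an isomorphism at every point of $F^{-1}(0)$. The points of $F^{-1}(0)$ are triples $(x,y,\lambda)$ with $(x,y)\in\mathcal{C}_\lambda=\{f_\lambda=0\}$, not on the original curve $\mathcal{C}$; the hypotheses $0\notin\mathcal{C}$ and ``the half-line misses $\mathcal{C}$'' from Section~\ref{sec:stereo} say nothing about $\mathcal{C}_\lambda$ for $\lambda\neq 0$. In fact $F^{-1}(0)$ really does meet the degeneracy locus: for $\lambda=(0,0,-f(0,0))$ the origin lies on $\mathcal{C}_\lambda$, and at $(x,y)=(0,0)$ both $Jf_\lambda$ and $x\frac{\partial f_\lambda}{\partial x}+(r+y)\frac{\partial f_\lambda}{\partial y}$ vanish automatically, so $(0,0,\lambda)\in F^{-1}(0)$ while $\det M=0$ there (and, worse, $r=(|x|^2+|y|^2)^{1/2}$ is not even differentiable at the origin). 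Hence $F$ is not a submersion along all of $F^{-1}(0)$, and you cannot conclude that $F^{-1}(0)$ is a $4$-manifold. The repair is routine but must be carried out: excise $B=\{(0,0)\}\cup\{(0,-t):t\ge 0\}$ from the $(x,y)$-domain, run your argument on the complement, and then check separately that the set of $\lambda$ for which the triple intersection meets $B$ is null --- over the origin the relevant $\lambda$'s form the complex hyperplane $\lambda_3=-f(0,0)$, and over the half-line the three equations degenerate to two (the third is identically zero there), cutting out a $3$-dimensional set in $(t,\lambda)$-space whose projection to $\Cc^3\cong\Rr^6$ has measure zero.
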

\begin{proof}
By adding a generic term of type $ax+by$ to $f$ we can assume that $\{Jf(x,y)=0\}$ and $\{x\frac{\partial f}{\partial x}+((|x|^2+|y|^2)^{1/2}+y)\frac{\partial f}{\partial y}=0\}$ 
have only a finite number of intersection points. Let $a_1,\ldots,a_n$ be the values of $f$ at these intersection points. If none of these values
is $0$, then Condition~\ref{con:almosttrivial} is satisfied. If one of the $a_i$ is $0$, we replace $f$ by $f+\varepsilon$. Notice that
this change does not affect $Jf$ nor $\{x\frac{\partial f}{\partial x}+((|x|^2+|y|^2)^{1/2}+y)\frac{\partial f}{\partial y}=0\}$, because they
depend only on the derivatives of $f$.
\end{proof}

The next genericity condition is used in Step 2 of Section~\ref{sec:proofofreid1}.
\begin{condition}\label{con:finitenumber}
The map $\mathcal{C}\to \Rr^2\times[0,\infty)$ given by $(x,y)\to (\pi_r(x,y),r)$ has only generic singularities. A
generic singularity means that singularities are isolated and the equation~\eqref{eq:c0nonzero} is satisfied at each of the critical points.
\end{condition}
\begin{lemma}
If $\deg f\ge 2$, the Condition~\ref{con:finitenumber} is generic, that is, adding a generic constant term to $f$
makes $\mathcal{C}$ satisfy Condition~\ref{con:finitenumber}.
\end{lemma}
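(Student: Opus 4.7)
The plan is to argue by Sard's theorem that a generic complex constant term achieves both parts of Condition~\ref{con:finitenumber}. The crucial observation is that adding a constant $c$ to $f$ merely slides the zero level set within the pencil $\{f = -c\}_{c\in\Cc}$: it leaves every partial derivative of $f$ invariant, and therefore does not move the two critical loci
\[
A_1 = \{Jf = 0\}, \qquad A_2 = \bigl\{x\tfrac{\partial f}{\partial x} + (r+y)\tfrac{\partial f}{\partial y} = 0\bigr\}
\]
coming from Lemma~\ref{lem:critproj} (where $r = \sqrt{|x|^2+|y|^2}$), nor the \emph{degeneracy locus}
\[
A_3 = \bigl\{\gamma(r+y) - \delta x = 0\bigr\},
\]
which, by \eqref{eq:seconddiff}, can be expressed purely in terms of $\partial f/\partial x$, $\partial f/\partial y$ and the second partials of $f$ at the point.

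The first step is to ensure isolation of the critical set on $\mathcal{C}_c := \{f+c=0\}$, namely $\mathcal{C}_c\cap(A_1\cup A_2)$. Each $A_i$ is a real-analytic subset of $\Cc^2\simeq\Rr^4$ of real codimension at least $2$, so has real dimension at most $2$. Applying the real-analytic Sard theorem to the restriction $f|_{A_i}$ on each smooth stratum, the set of critical values is of Lebesgue measure zero in $\Cc\simeq\Rr^2$; for $-c$ outside this set, $\{f = -c\}\cap A_i$ is a finite transverse intersection, hence the critical points of $(x,y)\mapsto(\pi_r(x,y),r)$ on $\mathcal{C}_c$ are isolated.

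The second step is to enforce $c_{r_0}\neq 0$ at each such critical point, i.e.\ that $\mathcal{C}_c$ misses $A_3\cap(A_1\cup A_2)$. Provided $A_3\cap A_i$ has real dimension at most $1$, Sard applied to $f|_{A_3\cap A_i}$ yields a second measure-zero set of bad values of $c$; avoiding both bad sets gives $\{f = -c\}\cap A_i\cap A_3 = \emptyset$. Intersecting the two exceptional sets of $c$'s still leaves a set of full measure, so arbitrarily small admissible $c$ exist.

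The main obstacle is ensuring that $A_3$ really cuts $A_i$ in positive codimension, i.e.\ that $\gamma(r+y)-\delta x$ does not vanish identically along $A_1$ or $A_2$. Using \eqref{eq:seconddiff} together with the critical-point relation $(\alpha,\beta) \parallel (x_0,r_0+y_0)$ on $A_2$ (and the analogous algebraic identity on $A_1$), one rewrites this combination, up to a nonzero scalar, as the second-order form $c\alpha^2+d\alpha\beta+e\beta^2$ evaluated on the fixed tangent direction; here the coefficients $c,d,e$ are the order-two Taylor coefficients of $f$. The hypothesis $\deg f \ge 2$ guarantees that these coefficients are not identically zero, and the identical vanishing of this quadratic form along all of $A_i$ would impose a nontrivial polynomial equation on the coefficients of $f$, so it fails after a preliminary perturbation (such as adding a generic quadratic term, as in the proof of Lemma~\ref{lem:perturbing}). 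Once this is established, the two Sard arguments go through and the genericity of Condition~\ref{con:finitenumber} follows.
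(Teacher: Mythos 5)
Your proposal is correct and follows essentially the same strategy as the paper's (sketched) proof: both exploit the fact that the critical loci and the degeneracy locus of \eqref{eq:c0nonzero} are defined by derivatives of $f$ and hence do not move under a constant shift, and then choose the level set to avoid a small bad set of values of $f$ --- the paper via a finite set of common roots of $Pf=0$ and the critical equation, you via Sard's theorem. The one place you go beyond the statement is invoking a preliminary non-constant (quadratic) perturbation to ensure the degeneracy condition does not vanish identically along the critical locus; note, however, that the paper's own sketch leaves the corresponding assertion ($Pf\not\equiv 0$, where $\deg f\ge 2$ is supposed to do the work) equally unjustified, so this is a shared soft spot rather than a defect of your argument.
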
 
\begin{proof}[Sketch of proof]
Recall that the condition \eqref{eq:c0nonzero} means that $\gamma(r_0+y_0)-\delta x_0\neq 0$, where $(x_0,y_0)$ is the critical
point of $\pi_r$, $r_0=(|x_0|^2+|y_0|^2)^{1/2}$ and $\gamma,\delta$ are second order terms of the expansion of a local parametrization
of $\mathcal{C}$ near the point $(x_0,y_0)$. Therefore \eqref{eq:c0nonzero} can be rephrased as a condition involving first and second
derivatives of $f$. We will write this condition as
\[Pf(x_0,y_0)\neq 0,\]
where $P$ is some second order linear differential operator. If $Pf$ is not identically zero, then $Pf$ and $x\frac{\partial f}{\partial x}+((|x|^2+|y|^2)^{1/2}+y)\frac{\partial f}{\partial y}$ have only finitely many common roots. As in the proof of Lemma~\ref{lem:perturbing} we take $a_1,\ldots,a_n$ to be the values of $f$ at these roots and then
replace $\mathcal{C}=f^{-1}(0)$ by $f^{-1}(\xi)$ for $\xi$ sufficiently small and $\xi\notin\{a_1,\ldots,a_n\}$. 
\end{proof}

Condition~\ref{con:finitenumber} on singular points of $\pi_r|_{\mathcal{C}}$ implies that these singular points are non-degenerate. In
particular we have the following corollary.
\begin{corollary}
Singular points of $\pi_r|_{\mathcal{C}}$ are isolated.
\end{corollary}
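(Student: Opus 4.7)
The plan is to observe that the statement is essentially an unwinding of Condition~\ref{con:finitenumber} combined with the characterization of critical points given by Lemma~\ref{lem:critproj}. Writing the argument out explicitly, I would split the critical locus into its two parts and treat each separately.

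First, for singular points of $\pi_r|_\mathcal{C}$ of the first type in Lemma~\ref{lem:critproj}, namely the non-transversality points satisfying $Jf(x,y)=0$, I would invoke Condition~\ref{con:milnorgenericity}: it asserts that $(x,y)\mapsto |x|^2+|y|^2$ restricts to a Morse function on $\mathcal{C}$, and the critical points of this function are precisely $\mathcal{C}\cap\{Jf=0\}$. Morse critical points are non-degenerate, hence isolated by Morse's lemma.

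Second, for singular points of the other type, which satisfy $x\frac{\partial f}{\partial x}+(r+y)\frac{\partial f}{\partial y}=0$, the key observation is that this is a \emph{holomorphic} equation in $(x,y)$ for fixed $r$. Together with $f=0$, we have two complex polynomial equations on $\Cc^2$. Either they share a common irreducible factor, or their common zero set is $0$-dimensional and therefore a finite set of isolated points. The former is excluded by the non-degeneracy condition~\eqref{eq:c0nonzero} in Condition~\ref{con:finitenumber}: if the two curves had a common component, then on this component we would have $c_{r_0}=\gamma(r_0+y_0)-\delta x_0 = 0$ identically (as illustrated already in Example~\ref{ex:weirdexample}, where the whole link collapses to a point under the projection), contradicting non-degeneracy of the critical points.

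Combining the two cases, the entire critical locus of $\pi_r|_{\mathcal{C}}$ is a disjoint union of two discrete sets, hence discrete. I do not expect any real obstacle here, since the argument is essentially a direct consequence of the definitions; the only mildly delicate point is ruling out the possibility that the holomorphic equation from Lemma~\ref{lem:critproj} has $\mathcal{C}$ (or a component of it) as a solution, which is precisely what the non-degeneracy hypothesis~\eqref{eq:c0nonzero} prevents.
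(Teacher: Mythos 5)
Your argument is genuinely different from the paper's, which offers essentially no derivation: there the corollary is an immediate unwinding of Condition~\ref{con:finitenumber}, whose very definition of a generic singularity already postulates that the singularities are isolated. You instead try to \emph{derive} isolatedness from the structure of the critical locus given by Lemma~\ref{lem:critproj}, which is more informative. Your treatment of the first type (Morse theory for the distance function, via Condition~\ref{con:milnorgenericity}) is fine, and your treatment of the second type is correct \emph{for each fixed $r$}: the two polynomial curves $\{f=0\}$ and $\{x\frac{\partial f}{\partial x}+(r+y)\frac{\partial f}{\partial y}=0\}$ either share a component or meet in finitely many points, and on a shared component the holomorphic function $t\mapsto x(t)/(r+y(t))$ would have identically vanishing derivative, hence be constant, forcing $c_{r}=0$ there and contradicting \eqref{eq:c0nonzero}. (You are then using only the non-degeneracy half of Condition~\ref{con:finitenumber}, so this is not circular, but it is worth saying so explicitly.)

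The one place where your argument does not close is the passage from fixed $r$ to the statement the paper actually needs. The surrounding text (the ``a priori countably many critical points'' in the discussion of Condition~\ref{con:milnorimproved}, and the subsequent corollary on finiteness proved by a writhe count over all radii) indicates that the singular set under discussion is the union over all $r$, i.e.\ the critical locus of the map $\mathcal{C}\to\Rr^2\times[0,\infty)$ of Condition~\ref{con:finitenumber}, viewed inside the real two-dimensional surface $\mathcal{C}$. Over all of $\mathcal{C}$ the second equation becomes $x\frac{\partial f}{\partial x}+\bigl((|x|^2+|y|^2)^{1/2}+y\bigr)\frac{\partial f}{\partial y}=0$ (cf.\ Condition~\ref{con:almosttrivial}), which is real-analytic but not holomorphic, so the ``two complex curves'' dichotomy no longer applies; and finiteness on each slice $L_r$ does not imply discreteness in $\mathcal{C}$, since the zero set could a priori contain a real-analytic arc transverse to the spheres, meeting each $L_r$ in finitely many points without being discrete. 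Ruling this out is precisely what the isolatedness clause of Condition~\ref{con:finitenumber} is there for (alternatively, one could extract it from the local model $t\mapsto a_r+b_rt+c_rt^2+\cdots$ with $c_{r_0}\neq 0$ in Step~3 of the proof of Theorem~\ref{th:reid1}, at the cost of some extra work); you should either invoke that clause directly, as the paper does, or supply an argument at the level of the family rather than slice by slice.
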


We also need a variant of Condition~\ref{con:milnorgenericity}.
\begin{condition}\label{con:milnorimproved}
The Hessian of $f$ is non-degenerate at all critical points of $\pi_r|_{\mathcal{C}}$.
\end{condition}
The proof of genericity of this condition (with respect to translating $\mathcal{C}$ by a small vector) is the same as the proof of 
\cite[Theorem 6.6]{Mi-Morse} stated in Condition~\ref{con:milnorgenericity} above. We have \emph{a priori} countably many 
critical points, so genericity is a dense condition, not necessarily
open-dense (we obtain an open-dense set of possible translate vectors that guarantee that the Hessian is non-generate at first $N$ critical 
points of $\pi_r|_{\mathcal{C}}$,
we pass with $N$ to infinity and apply Cantor's lemma).

However, we can have another result that follows from genericity condition. This result uses Theorem~\ref{th:reid1}, but we do not 
use it in the proof of Theorem~\ref{th:reid1}.

\begin{corollary}
There are only finitely many singular points of $\pi_r|_{\mathcal{C}}$.
\end{corollary}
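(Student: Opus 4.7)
The plan is to combine the isolation statement of the preceding corollary with a compactness argument. First I would observe that $L_r = \mathcal{C}\cap S_r$ is closed in the compact sphere $S_r$, so it is itself compact. The singular locus $\Sigma \subset L_r$ of $\pi_r|_{L_r}$ is cut out by the vanishing of the real-analytic functions appearing in Lemma~\ref{lem:critproj} (namely $Jf(x,y)$ and $x\partial_x f + (r+y)\partial_y f$, together with the equations of $\mathcal{C}$ and $S_r$), so $\Sigma$ is closed in $L_r$, hence compact.

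Next I would invoke the preceding corollary to deduce that every point of $\Sigma$ is isolated in $L_r$, so that $\Sigma$ carries the discrete topology. This uses Theorem~\ref{th:reid1} in the following way: at a transverse critical point, Step~3 of the proof of Theorem~\ref{th:reid1} shows that the local model of $\pi_r|_{L_r}$ is the fold $t \mapsto t^2$, which is an isolated singularity of the projection. At non-transverse critical points (those in $\{Jf = 0\}\cap\mathcal{C}$), isolation follows instead from the Morse genericity Condition~\ref{con:milnorgenericity} applied to the squared distance function on $\mathcal{C}$. Either way, $\Sigma$ is discrete, and a compact discrete Hausdorff space is finite, which gives the claim.

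The main obstacle is not the topological deduction itself---a one-liner once isolation is in hand---but the fact that one must genuinely rely on Theorem~\ref{th:reid1} to isolate the two \emph{distinct} types of critical point from Lemma~\ref{lem:critproj} in a uniform way, and then verify that a single small perturbation of $f$ simultaneously achieves all the genericity Conditions~\ref{con:milnorgenericity}--\ref{con:milnorimproved} of Section~\ref{sec:afewwords}. Example~\ref{ex:weirdexample}(2) illustrates what can go wrong without these: an entire connected component of $L_r$ can collapse to a single point under $\pi_r$, so $\Sigma$ becomes positive-dimensional rather than finite.
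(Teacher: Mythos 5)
There is a genuine gap: your compactness argument proves finiteness of the critical locus on a \emph{single} link $L_r=\mathcal{C}\cap S_r$, but the corollary (as the paper's own proof makes clear) is about the set of all singular points on the whole affine curve $\mathcal{C}$, i.e.\ the singular locus of the family map $(x,y)\mapsto(\pi_r(x,y),r)$ of Condition~\ref{con:finitenumber}, taken over \emph{all} radii. That set lives in the non-compact curve $\mathcal{C}$, and a closed discrete subset of a non-compact space need not be finite (think of $\Zz\subset\Rr$): a priori one could have one critical point on each of infinitely many spheres marching off to infinity. So ``compact Hausdorff $+$ discrete $\Rightarrow$ finite'' has nothing to bite on here, and the isolation corollary alone cannot give the conclusion. (If the statement really were about one fixed $r$, it would be an immediate consequence of the preceding corollary and the paper would not need a separate argument.)

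The paper closes this global gap with a quantitative, monotonicity argument that your proposal is missing. By Theorem~\ref{th:reid1} (whose proof, as the paper notes, only needs non-degeneracy of the critical points, not their finiteness), each critical point of the projection triggers a move of type $\reid_1^{\uparrow\boxplus\oplus}$ or $\reid_1^{\downarrow\boxminus\ominus}$, and by Table~\ref{tab:changes} each of these increases $\writhe(D_r)$ by exactly $1$, while the handle attachments $\Oo$, $\Ii$ and the moves $\reid_2$, $\reid_3$ leave the writhe unchanged. Hence $\writhe(D_r)$ is non-decreasing in $r$ and jumps by $+1$ at every critical point; since the link at infinity has finite writhe, the total number of critical points is bounded by that writhe and is therefore finite. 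If you want to salvage your approach, you would have to replace compactness of $L_r$ by some compactness statement for the critical locus inside all of $\mathcal{C}$ (e.g.\ showing it is contained in a ball), which is essentially what the writhe bound accomplishes indirectly.
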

\begin{proof}
The proof of Theorem~\ref{th:reid1} does not need that $\pi_r|_{\mathcal{C}}$ has finitely many critical points, only that they are non-degenerate
in the sense that Condition~\ref{con:finitenumber} and Condition~\ref{con:milnorimproved} are satisfied. Applying Theorem~\ref{th:reid1} implies
that each critical point of $\pi_r|_{\mathcal{C}}$ increases the writhe of the corresponding diagram. As the link at infinity has finite writhe,
the number of critical points is always finite.
\end{proof}




\begin{thebibliography}{99}
\bibitem{Arn} V.~Arnold,
\emph{Geometrical methods in the theory of ordinary differential equations}.
2nd edition. Grundlehren der Mathematischen Wissenschaften, 250. Springer-Verlag, New York, 1988.


\bibitem{Baa05} S.~Baader,
\emph{Slice and Gordian numbers of track knots}. 
Osaka J. Math. 42 (2005),  257--271. 

\bibitem{Baa14} S.~Baader,
\emph{Bipartite graphs and quasipositive surfaces.} 
Q. J. Math. 65 (2014), 655--664. 

\bibitem{Ben} D.~Bennequin,
\emph{Entrelacements et equations de Pfaff}.
Ast\'erisque 107--108 (1983), 87--161.

\bibitem{BiBr} J.~Birman, T.~Brendle,
\emph{Braids: a survey}.
In: Handbook of knot theory, 19--103, Elsevier, Amsterdam, 2005. 

\bibitem{BF} M.~Boileau, L.~Fourrier,
\emph{Knot theory and plane algebraic curves}.
Knot theory and its applications. Chaos Solitons Fractals 9 (1998), 779--792.

\bibitem{BO} M.~Boileau, S.~Orevkov,
\emph{Quasipositivity of an analytic curve in a pseudoconvex 4-ball}.
C. R. Acad. Sci. Paris Math. 332 (2001) 825--830.

\bibitem{Et} J.~Etnyre, J.~Van Horn-Morris,
\emph{Fibered transverse knots and the Bennequin bound}.
Int. Math. Res. Not. IMRN 2011, no. 7, 1483--1509. 

\bibitem{Ge} H.~Geiges, 
\emph{Contact Geometry}.
Handbook of differential geometry. vol. II, 315--382, 
Elsevier/North-Holland, Amsterdam, 2006.

\bibitem{HN} J.~Hass, T.~Nowik,
\emph{Invariants of knot diagrams}, 
Math. Ann. \textbf{342} (2008), no. 1, 125--137.
 
\bibitem{Hed05} M.~Hedden,
\emph{Notions of positivity and Ozsv\'ath--Szab\'o invariants}.
J. Knot Theory Ramifications 19 (2010) 617--629.


\bibitem{KaTe} C.~Kassel, V.~Turaev,
\emph{Braid groups}.
Graduate Texts in Mathematics, 247. Springer, New York, 2008. 

\bibitem{Lau} H.~Laufer, 
\emph{On the number of singularities of an analytic curve}.
Trans. Amer. Math. Soc. 136 (1969) 527--535.


\bibitem{Mi-Morse} J.~Milnor, \emph{Morse theory}.
Annals of Mathematics Studies 51, Princeton University Press, 1963.

\bibitem{Mi} J.~Milnor, 
\emph{Singular points of complex hypersurfaces}.
Annals of Mathematics Studies 61, Princeton University Press, 1968.

\bibitem{MP} K.~Murasugi, J.~Przytycki, 
\emph{An index of a graph with applications to knot theory}. 
Mem. Amer. Math. Soc. 106 (1993).

\bibitem{Ru1983} L. Rudolph,
\emph{Algebraic functions and closed braids}.
Topology 22 (1983), 191--202. 

\bibitem{Rud-complex} L.~Rudolph,
\emph{Some knot theory of complex plane curves}.
Enseign. Math. 29 (1983), 185--208.

\bibitem{Ru2005} L. Rudolph, 
\emph{Knot theory of complex plane curves}.
Handbook of knot theory, 349--427, Elsevier, 2005.

\bibitem{sage}
\emph{{S}age {M}athematics {S}oftware ({V}ersion
  6.10)}, The Sage Developers, 2016, {\tt http://www.sagemath.org}.

 
\bibitem{St2} A.~Stoimenow, 
\emph{On the definition of graph index}. 
J. Aust. Math. Soc. 94 (2013), 417--429. 

\bibitem{Suw} P.~Suwara, 
\emph{Minimal generating sets of directed oriented Reidemeister moves}.
 J. Knot Theory Ramifications, to appear (2017).

\bibitem{Tr1} P.~Traczyk, 
\emph{On the index of graphs: index versus cycle index}. 
J. Knot Theory Ramifications 3 (1994), 531--536. 

\bibitem{Tr2} P.~Traczyk, 
\emph{Reducing braid index by Murasugi-Przytycki operation}. 
J. Knot Theory Ramifications 20 (2011), 223--229.

\bibitem{Yam} S.~Yamada, 
\emph{The minimal number of Seifert circles equals the braid index of a link}.
Invent. Math. 89 (1987), 347--356. 

\bibitem{Vog} P.~Vogel,
\emph{Representation of links by braids: a new algorithm}.
Comment. Math. Helv. 65 (1990), 104--113. 
\end{thebibliography}
\end{document}